\newtheorem{thm}{Theorem}
\newtheorem{defi}[thm]{Definition}
\newtheorem{prop}[thm]{Proposition}
\newtheorem{coroll}[thm]{Corollary}
\newtheorem{rmk}[thm]{Remark}
\newtheorem{lemma}[thm]{Lemma}
\newtheorem{hypo}{Hypothesis}
\newcommand\blfootnote[1]{%
  \begingroup
  \renewcommand\thefootnote{}\footnote{#1}%
  \addtocounter{footnote}{-1}%
  \endgroup
}
\newcommand{\RR}{\mathbb{R}}
\newcommand{\pO}{\partial \Omega}
\newcommand{\EE}{\mathbb{E}}
\newcommand{\supp}{\text{supp}}
\newcommand{\vertiii}[1]{{\left\vert\kern-0.25ex\left\vert\kern-0.25ex\left\vert #1 
    \right\vert\kern-0.25ex\right\vert\kern-0.25ex\right\vert}}
\newcommand{\vp}{v_{\perp}}
\newcommand{\vt}{v_{\parallel}}
 \newcommand{\up}{u_{\perp}}
 \newcommand{\ut}{u_{\parallel}}
   \newcommand{\rp}{r_{\perp}}
 \newcommand{\rt}{r_{\parallel}}
 \newcommand{\e}{\epsilon}
 \renewcommand{\d}{\mathrm{d}}
\title{Convergence Towards the Steady State of a Collisionless Gas With Cercignani-Lampis Boundary Condition.}
\author{Armand Bernou\footnote{Sorbonne Universit\'e and Universit\'e de Paris, CNRS, Laboratoire Jacques-Louis Lions (LJLL), F-75005 Paris, France. E-mail address: armand.bernou@sorbonne-universite.fr.} }
\begin{document}

\maketitle

\abstract{
We study the asymptotic behavior of the kinetic free-transport equation enclosed in a regular domain, on which no symmetry assumption is made, with Cercignani-Lampis boundary condition. We give the first proof of existence of a steady state in the case where the temperature at the wall varies, and derive the optimal rate of convergence towards it, in the $L^1$ norm. The strategy is an application of a deterministic version of Harris' subgeometric theorem, in the spirit of \cite{Canizo_Mischler_2020} and \cite{Bernou_Transport_Semigroup_2020}. We also investigate rigorously the velocity flow of a model mixing pure diffuse and Cercignani-Lampis boundary conditions with variable temperature, for which we derive an explicit form for the steady state, providing new insights on the role of the Cercignani-Lampis boundary condition in this problem.
}

\blfootnote{\textit{2020 Mathematics Subject Classification:} 35B40, 82C40 (35C05, 35F16, 35Q49).}

\blfootnote{\textit{Key words and phrases:} Transport equations, Cercignani-Lampis boundary condition, subgeometric Harris' theorem, collisionless gas, velocity flow.} 

\textbf{Acknowledgements:} The author acknowledges financial support from the European Research Council (ERC) under the European Union’s Horizon 2020 research and innovation programme (Grant Agreement number 864066).

\tableofcontents

\section{Introduction}

\subsection{Model and boundary condition}
In this paper, we consider the kinetic free-transport equation with Cercignani-Lampis boundary condition, inside a bounded domain (open, connected) $\Omega \subset \RR^d$, $d \in \{2,3\}$. The corresponding initial boundary value problem writes
\begin{align}
\label{eq:main_pb}
\left\{
\begin{array}{lll}
\partial_t f(t,x,v) + v \cdot \nabla_x f(t,x,v) = 0, \qquad \qquad &(t,x,v) \in (0,\infty) \times G, \\
\gamma_- f(t,x,v) = K \gamma_+ f(t,x,v), \quad & (t,x,v) \in \RR_+ \times \Sigma_-, \\
f(0,x,v) = f_0(x,v), & (x,v) \in G,
\end{array}
\right.
\end{align}
with the notations $G := \Omega \times \RR^d$, and, denoting $n_x$ the unit \textbf{outward} normal vector at $x \in \pO$, 
\begin{align*}
\Sigma := \pO \times \RR^d, \qquad \Sigma_{\pm} := \big\{(x,v) \in \Sigma, \pm (v \cdot n_x) > 0 \big \}.
\end{align*}
Let us introduce the boundary operator $K$ corresponding to the Cercignani-Lampis boundary condition. Let $\bar G$ denote the closure of $G$. For a function $\phi$ on $(0,\infty) \times \bar{G}$, we denote $\gamma_{\pm} \phi$ its trace on $(0,\infty) \times \Sigma_{\pm}$, under the assumption that this object is well-defined. The boundary operator $K$ is defined, for $\phi$ supported on $(0,\infty) \times \Sigma_+$, for $(t,x,v) \in (0,\infty) \times \Sigma_-$ and assuming that $\phi(t,x,\cdot) \in L^1(\{v' \in \RR^d : v' \cdot n_x > 0\})$, by
\begin{align}
\label{eq:def_K}
K \phi(t,x,v) = \int_{\Sigma_+^x} \phi(t,x,u) \, R(u \to v;x) \, |u \cdot n_x| \, \d u,
\end{align}
where, for all $x \in \pO$,
\[ \Sigma_{\pm}^x := \big\{v \in \RR^d, (x,v) \in \Sigma_{\pm} \big\}, \]
and where the kernel $R(u \to v;x)$ is given, for $x \in \pO$, $u \in \Sigma_+^x$, $v \in \Sigma_-^x$, by the following formula
\begin{align}
\label{eq:def_R}
R(u \to v; x) &:= \frac{1}{\theta(x) \rp} \frac{1}{(2 \pi \theta(x) \rt (2 - \rt))^{\frac{d-1}{2}}} e^{-\frac{|\vp|^2}{2 \theta(x) \rp}} e^{-\frac{(1-\rp) |\up|^2}{2 \theta(x) \rp}} \\
&\qquad \times e^{-\frac{|\vt - (1-\rt) \ut|^2}{2 \theta(x) \rt (2 - \rt)}} I_0 \Big( \frac{(1-\rp)^{\frac12} \up \cdot \vp}{\theta(x) \rp} \Big), \nonumber
\end{align}
with the following notations:
\begin{align*}
\vp := (v \cdot n_x) n_x, \quad \vt := v - \vp, \quad \up := (u \cdot n_x) n_x, \quad \ut = u - \up,
\end{align*}
where $I_0$ is the modified Bessel function given, for all $y \in \RR$, by
\begin{align}
\label{eq:defI0}
I_0(y) := \frac{1}{\pi} \int_0^{\pi} e^{y \cos \phi} \, \d \phi,
\end{align}
and where $\theta(x) > 0$ is the wall temperature at $x \in \pO$. The coefficients $\rp \in (0,1)$ and $\rt \in (0,2)$ are the two accommodation coefficients (normal and tangential) at the wall. The value $\vp$ is the normal component of the velocity $v$ at the boundary, while $\vt$ is the tangential component. The same interpretation is of course valid for $u$. 

We will heavily use the normalization property, see \cite[Lemma 10]{Chen_CL_2020}, which, with our notation for $R$, writes, for all $(x,u) \in \Sigma_+$,
\begin{align}
\label{eq:normalization_basic}
\int_{\Sigma_-^x} R(u \to v; x) \, |v \cdot n_x| \, \d v = 1. 
\end{align}

This condition ensures the conservation of mass, and the $L^1$ contraction of the semigroup associated to \eqref{eq:main_pb}, see Section \ref{section:setting}.

\subsection{Physical motivations}

In kinetic theory, the free-transport equation with boundary condition models the evolution of a Knudsen (collisionless) gas enclosed in the vessel $\Omega$, and was first examined in the seminal work of Bardos \cite{Bardos_1970}. In this case, the gas is strongly diluted, hence the Lebesgue measure of the set of collisions between particles is $0$ and the collision operator of the Boltzmann equation describing statistically the dynamics vanishes. Gas molecules in $\Omega$ move according to the free-transport dynamics until they meet with the boundary. 

Several models for the description of the reflection at the boundary $\pO$ exist: the simplest choices are the bounce-back boundary condition
\[ f(t,x,v) = f(t,x,-v), \qquad (t,x,v) \in (0,\infty) \times \Sigma_-, \]
and the pure specular boundary condition
\[f(t,x,v) = f(t,x,v - 2(v \cdot n_x) n_x), \qquad (t,x,v) \in (0,\infty) \times \Sigma_-, \]
which are deterministic. Those conditions are unable to render the stress exerted by the gas on the wall, and for this reason, Maxwell \cite[Appendix]{Maxwell_Rarified_Gases_1879} introduced the pure diffuse reflection, in which the particle is adsorbed by the boundary before being re-emitted inside the domain according to a new velocity distribution, defined through some kernel $M$:
\[ f(t,x,v) = M(x,v) \Big(\int_{\Sigma_+^x} f(t,x,u) \, |u \cdot n_x| \, \d u \Big), \qquad (t,x,v) \in (0,\infty) \times \Sigma_-. \] 
The paradigmatic example of such $M$ is the wall Maxwellian
\[ M(x,v) = c(x) e^{-\frac{|v|^2}{2 \theta(x)}}, \]
with $c(x)$ a normalizing constant. In the case of the pure diffuse reflection, there is no correlation between the incoming velocity and the emerging one, for both the normal and the tangential components. A first answer to this issue was the introduction of the so-called Maxwell boundary condition, based on a convex combination between the pure diffuse reflection and the pure specular reflection. 

A more delicate way to address this question, while still modeling the stress exerted by the gas on the boundary, is to consider that the probability distribution appearing in the diffuse reflection retains some information from the impinging velocity. Based on this idea, Cercignani and Lampis \cite{Cercignani_Lampis_1971} introduced what is now known as the Cercignani-Lampis boundary condition, corresponding to the kernel $R$ given by \eqref{eq:def_R}, see also the monograph of Cercignani, Illner and Pulvirenti \cite{Cercignani_Illner_Pulvirenti2004}. In this kernel, two accommodation coefficients are given: one for the normal component, $\rp$, and one for the tangential component $\rt$. This description generalizes that of the diffuse reflection: for $\rt = \rp = 1$, we recover the case of the Maxwellian distribution at the wall mentioned above. As for the specular reflection, it can be considered as a limiting case in which $\rt = \rp = 0$, while the bounce-back boundary condition corresponds to a limiting case with $\rt = 2$ and $\rp = 0$. Some graphs of the distribution induced by the Cercignani-Lampis boundary condition with different sets of accommodation coefficients are provided in Chen \cite[Figures 1-4]{Chen_CL_2020}. 

Already in the 1980's, physical computations showed that, for some models, the Cercignani-Lampis boundary condition provides a more accurate description of the system in comparison with the pre-existing boundary conditions. A particularly interesting case is the computation of the Poiseuille flow and the thermal creep through a tube in the free-molecular regime, see Sharipov \cite{Sharipov_CL_2002} and the references within, in particular \cite{Alexandrychev_CL_1986, Markelov_CL_1982}. The Cercignani-Lampis boundary condition also describes more accurately the behavior, observed experimentally, of a gas nitrogen flow, mainly because of the introduction of the tangential accommodation coefficient which is found slightly different from one, see Pantazis et al. \cite[Sections 3 and 4]{Pantazis_CL_2011}. 

\subsection{Qualitative convergence towards the steady state}

For the free-transport equation considered in this paper, a first key question regarding the asymptotic behavior is whether a steady state exists. While the answer is trivial in the case of the Maxwell boundary condition with constant temperature, it is significantly more involved in the case where the temperature varies, although an explicit form was derived by Sone \cite[Chapter 2, Section 2.5, Equation (2.48)]{Sone_Molecular_Gas_2007}. It is unclear whether such an explicit expression exists for the Cercignani-Lampis boundary condition with varying temperature (one should expect a quite complicated form if that is the case), although some stability properties for Maxwell distributions interacting with this kernel exist, see Lord \cite{Lord_CLL_1991}. On the other hand, it can be easily deduced from \cite[Equation (6)] {Cercignani_Lampis_1971} that an explicit steady state exists in the form of a Maxwellian distribution in the case where the temperature and the accommodation coefficients are constant. Let us mention that for the particular case where the rarefied gas is confined between two parallel plates with varying temperature, a numerical derivation has been obtained by means of an integral equation by Kosuge et al. \cite{Kosuge_steady_2011}. We present in Section \ref{Section:steady-flow} a similar toy model, in which we impose that $\rp = \rt = 1$ and $\theta \equiv 1$ on one of the plate. For this case, we provide an explicit steady state even when the temperature (on the second plate) is allowed to vary. 

Recently, a striking work of Lods, Mokhtar-Kharroubi and Rudnicki \cite{Lods_2020} focusing on the free-transport equation enclosed in a domain with general boundary conditions gives a proof of existence of a steady state for a large class of diffuse, regular (in their terminology) boundary operators. This work was completed by Lods and Mokhtar-Kharroubi in \cite{Lods_2020_Quantitative} by a derivation of some rate of convergence towards this steady state by means of a Tauberian approach. However, the Cercignani-Lampis boundary condition fails to satisfy the ``regular'' property required in those two papers, see Proposition \ref{prop:not_regular}. In this paper, we obtain the existence and uniqueness of the steady state from our results on the convergence, providing the first proof of existence of this steady state when the temperature at the boundary is allowed to vary. 

\subsection{Convergence rate towards the steady state for linear kinetic equations with boundary conditions}

In the present investigation, we are mainly interested in the quantitative study of the convergence towards the steady state. Those questions of quantitative convergence of linear kinetic equations have drawn major interest in the mathematical community during the last decade.

Let us also mention briefly the numerous studies focusing on equations from collisional kinetic theory linearized around an equilibrium, in the $L^2$ setting, with general Maxwell boundary conditions (note that, in this case, we expect convergence towards equilibrium even with the pure specular boundary condition). In particular, we quote here the various applications of the $L^2-L^{\infty}$ theory of Guo, first applied to the Boltzmann equation \cite{guo_decay_2010}, see also Briant-Guo \cite{Briant_Maxwell_2016}, and to the Landau equation with the specular reflection boundary condition, see \cite{Guo_Landau_Specular_L2_2020,Guo_Landau_Specular_2020}. On this matter, we mention also \cite{Kim2017, Kim2018, Duan2020}. A more recent result of Bernou, Carrapatoso, Mischler and Tristani \cite{BCMT_Linear_Maxwell_2020} handles the whole general Maxwell boundary condition for the linearized Boltzmann equation with and without cut-off and the linearized Landau equation based on an adaptation of the (constructive) hypocoercivity method for linear equations developed by Dolbeault-Mouhot-Schmeiser \cite{Dolbeault_note_2009, Dolbeault_Hypocoercivity_2015}. Those $L^2$ methods can not be adapted in a straightforward manner to the Cercignani-Lampis boundary condition, because, as noticed by Chen \cite[Remark 3]{Chen_CL_2020}, it is not possible to view the boundary condition as a projection to obtain the $L^2$ inequality heavily required in the case of the Maxwell boundary condition. New ideas are needed to adapt the hypocoercivity framework to this model. Very recent results of well-posedness have been obtained by Chen \cite{Chen_CL_2020} and, in the convex setting, by Chen, Kim and Li \cite{Chen_Al_2020}.

 For the free-transport equation considered here, with pure diffuse boundary condition, a numerical investigation was first performed by Tsuji, Aoki and Golse \cite{tsuji_relaxation_2010}. In their paper, the rate of convergence, in the $L^1$ norm, was identified as a polynomial rate of order $\frac{1}{t^d}$.  A first analytic study of the model followed, in which Aoki and Golse \cite{Aoki_golse_2011} derived an upper bound of $\frac{1}{t}$ for the convergence in $L^1$ norm, with strong symmetry hypotheses (radial symmetry of the initial data and of the space domain). In a series of articles, Kuo, Liu and Tsai \cite{kuo_free_2013, kuo_liu_tsai_2014} and Kuo \cite{kuo_equilibrating_2015} found the optimal rate $\frac{1}{t^d}$ with the same assumption of radial symmetry of the domain, by using probabilistic arguments, in particular deriving a law of large numbers for the interval of times between two collisions of a particle with the boundary. Ultimately their results allow one to handle the Maxwell boundary condition with various temperatures at the boundary. Another probabilistic approach was taken by Bernou and Fournier \cite{bernou_fournier_collisionless_2019} through the use of a probabilistic coupling, based on a description of the problem with a stochastic process. This allowed the authors to conclude to the optimal rate $\frac{1}{t^d}$ in the general case of a $C^2$ regular domain, with constant temperature. The paper also extends slightly beyond the Maxwellian case by considering other possibilities for $M$ and modifying the rate of convergence accordingly. Some related numerical results are provided in Bernou \cite[Chapter 3]{Bernou_PhD}. Still for the free-transport equation with Maxwell boundary condition, Bernou \cite{Bernou_Transport_Semigroup_2020} used a recent adaptation of Harris' theorem in the sub-geometric, deterministic setting, due to Ca\~nizo and Mischler \cite{Canizo_Mischler_2020}, to obtain the optimal rate even in the case where the temperature varies, without symmetry hypothesis, with $M$ a wall Maxwellian. Regarding the case of the pure specular boundary condition, there is no mixing (the system is entirely deterministic), and we refer the interested reader to the thorough study of Briant \cite[Appendix A]{Briant_2015} focusing on the characteristics of the corresponding system. 
 
  To the best of our knowledge, this paper is the first analystic study of the asymptotics of the free-transport equation in a general domain with Cercignani-Lampis boundary condition. By adapting the method from \cite{Bernou_Transport_Semigroup_2020}, we obtain the optimal rate of convergence towards equilibrium of $\frac1{t^{d-}}$ in the $L^1$-norm. We hope that this understanding will help to tackle the difficult extension of the results regarding asymptotic behaviors of collisional kinetic equations to this more general boundary condition.

\subsection{Velocity flow}

In the pure diffuse case, that is when $(\rp, \rt) = (1,1)$, and for the Maxwell boundary condition, the steady flow of velocity  (perhaps surprisingly) vanishes, even in the case where the temperature is allowed to vary. This is not the case in general when one considers other parameters $(\rp, \rt) \ne (1,1)$. In particular, in the case of a gas confined between two plates with sinusoidal temperature distribution, while the steady flow vanishes for the Maxwell boundary condition, cf. \cite{Sone_Molecular_Gas_2007}, four different behaviors of this flow are observed when $\rp$ and $\rt$ vary. On this subject, the main reference is the work of Kosuge et al. \cite{Kosuge_steady_2011}. In Section \ref{Section:steady-flow}, we consider a model in which a gas is confined between two plates, one with pure diffuse reflection boundary condition $\rt = \rp = 1$, the second one with a general Cercignani-Lampis condition with variable temperature. We derive the steady state for the corresponding problem, giving the first example of an explicit steady state in the case $(\rp,\rt) \not \equiv (1,1)$, and we prove that this steady state implies no steady flow. A possible interpretation of this result is the following:  the pure diffuse boundary condition destroys the previous correlations, and the flow originated from it has no preferred orientation. This hints that the crucial mechanism behind the steady flow observed numerically by Kosuge et al. \cite{Kosuge_steady_2011} might be the absence of a decorrelation mechanism - in our toy model, the pure diffuse boundary condition, which plays a role for all trajectories.

\subsection{Hypotheses and main results} 

We assume that $\Omega \subset \RR^d$, with $d \in \{2,3\}$, and we endow $\RR^d$ with the Lebesgue measure. The symbols $\d x$, $\d v$ denote this measure. We assume that $\Omega$ is bounded and $C^2$ with closure $\bar{\Omega}$, and that the map $x \to n_x$ can be extended to the whole set $\bar{\Omega}$ as a $W^{1,\infty}(\Omega)$ map, where $W^{1,\infty}(\Omega)$ denotes the corresponding Sobolev space. For any $k \in \mathbb{N}^*$, we use the Euclidean norm on $\RR^k$ and denote $|x|$ the norm of $x$. We denote $x \cdot y$ the scalar product between $x$ and $y$ in $\RR^k$. We write $d(\Omega)$ for the diameter of $\Omega$, given by 
\[ d(\Omega) := \sup_{(x,y) \in \Omega^2} |x - y|. \]
On $\bar{G} = \bar{\Omega} \times \RR^d$, setting 
\[ \Sigma_0 := \big\{(x,v) \in \pO \times \RR^d, v \cdot n_x = 0 \big\}, \]
we define the map $\sigma$ by:
\begin{align}
\label{eq:def_sigma}
\sigma(x,v) = 
\left\{
\begin{array}{ll}
\inf \{t > 0, x + tv \in \pO\}, \qquad & (x,v) \in \Sigma_- \cup G, \\
0, & (x,v) \in \Sigma_+ \cup \Sigma_0,
\end{array}
\right.
\end{align}
which corresponds to the time of the first collision with the boundary for a particle in position $x$ with velocity $v$ at time $t = 0$. The $L^1$ space on $G$, denoted $L^1(G)$, is the space of measurable $\RR$-valued functions $f$ such that 
\[ \|f\|_{L^1} :=  \int_G |f(x,v)| \, \d v \d x < \infty. \]
For any non-negative measurable function $w$ defined on $G$, we introduce the weighted $L^1$ space $L^1_w(G) = \{f \in L^1(G), \|f w \|_{L^1} < \infty\}$ endowed with the norm defined by 
\[ \|f\|_w := \|f w \|_{L^1}. \]
 For any function $f \in L^1(G)$, we define the mean of $f$ by 
\begin{align}
\label{eq:def_mean}
\langle f \rangle = \int_G f(x,v) \, \d v \d x.
\end{align}
We assume that both accommodation coefficients are non-singular, i.e. $\rp \in (0,1)$ and $\rt \in (0,2)$. Note that this includes the case of the pure diffuse boundary condition. 
Finally, we assume that the wall temperature $\theta: \pO \to \RR_+^*$ is a continuous function, positive on $\pO$ compact, and thus admitting two extreme values $\theta_0, \theta_1 > 0$ such that 
\[ \forall x \in \pO, \qquad 0 < \theta_0 \le \theta(x) \le \theta_1. \]

The Harris' theorem used in this paper gives a convergence result in the $L^1$ norm depending on some weighted $L^1$ norm of the initial data. 
The weights will take the form of polynomials of the following quantity
\begin{align}
\label{eq:def_bracket}
\langle x,v \rangle := (1 + \sigma(x,v) + \sqrt{|v|}), \qquad (x,v) \in \bar{G}.
\end{align}
We set, for all $\alpha > 0$,
\[ m_{\alpha} := \langle x,v \rangle^{\alpha}. \]
After proving that the problem \eqref{eq:main_pb} is well-posed, we introduce the semigroup $(S_t)_{t \ge 0}$ such that, for all $f \in L^1(G)$, for all $t > 0$, $S_t f$ is the unique solution of \eqref{eq:main_pb} at time $t > 0$ belonging to $L^1(G)$. Our main result is the following:

\begin{thm}
\label{thm:Main}
For all $\epsilon \in (0,\frac12)$, there exists a constant $C > 0$ such that for all $t \ge 0$, for all $f,g \in L^1_{m_{d+1-\epsilon}}(G)$ with $\langle f \rangle = \langle g \rangle$, there holds
\[ \|S_t (f-g) \|_{L^1} \le \frac{C}{(1+t)^{d+1-\epsilon}} \|f - g\|_{m_{d+1-\epsilon}}. \]
\end{thm}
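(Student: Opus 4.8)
The plan is to apply a deterministic, subgeometric version of Harris' theorem, following the strategy of \cite{Canizo_Mischler_2020} and \cite{Bernou_Transport_Semigroup_2020}. The backbone of such theorems is a pair of ingredients: a \emph{Lyapunov (Foster--Lyapunov) condition} controlling the weighted norm $\|\cdot\|_{m_\alpha}$ along the semigroup, and a \emph{Doeblin-type minorization} on sublevel sets of the Lyapunov weight, expressing that mass spreads and loses memory of its initial configuration after a controlled number of boundary collisions. Combining these two with the mass-conservation property $\langle S_t f\rangle = \langle f\rangle$ (which makes the hyperplane $\{\langle f\rangle = 0\}$ invariant and reduces the convergence statement to a contraction estimate on $f-g$) yields subgeometric convergence rates whose order is dictated by the interplay between the growth of the weight $m_\alpha$ and the tail of the collision-time distribution. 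The target rate $(1+t)^{-(d+1-\epsilon)}$ with the weight $m_{d+1-\epsilon}$ is precisely the signature of a polynomial Harris scheme where the return-time tails decay like $t^{-(d+1)}$ up to losses.

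First I would set up the weighted spaces and verify the Lyapunov structure. The quantity $\langle x,v\rangle = 1 + \sigma(x,v) + \sqrt{|v|}$ is designed so that $m_\alpha$ penalizes both long free-flight times before the next collision (through $\sigma$) and large speeds (through $\sqrt{|v|}$). Along free transport, $\sigma$ decreases linearly in $t$ until a collision, so that $m_\alpha$ contracts between collisions; at each reflection, the Cercignani--Lampis kernel $R$ redistributes the velocity according to a Gaussian-type law with temperature bounded in $[\theta_0,\theta_1]$, and using the normalization \eqref{eq:normalization_basic} together with the explicit form \eqref{eq:def_R} one controls the post-collision moments of $\sqrt{|v|}$ and of $\sigma$. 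The crucial computation is to show that the expected value of $\langle x,v\rangle^\alpha$ after one collision cycle is bounded in terms of its pre-collision value with a gain, producing the subgeometric drift inequality of the form $S_t m_\alpha \le$ (contracting term) $+$ (bounded term on a compact set), where the contraction is slow (polynomial rather than exponential) because of the heavy tail of $\sigma$.

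Next I would establish the minorization (Doeblin) condition: there exists a nontrivial measure $\nu$ and a constant $c>0$ such that, on a sublevel set $\{m_\alpha \le R\}$, one has $S_{T} f \ge c\,\nu \langle f\rangle$ for a suitable time $T$. This is where the Cercignani--Lampis kernel enters decisively: its smoothing effect must be quantified to guarantee that after finitely many collisions the law of the particle has an absolutely continuous component bounded below on a fixed region of phase space, uniformly over starting points in the sublevel set. The spreading of mass in position is handled through the geometry of $\Omega$ (bounded, $C^2$, with the normal extending as a $W^{1,\infty}$ field), while the spreading in velocity comes from the Gaussian factors in $R$ and the strict positivity of $\theta$. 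One must track at least two or three collisions to cover the full phase space, since a single Cercignani--Lampis reflection only randomizes velocities locally around the incoming direction.

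The main obstacle, I expect, is the interaction between the tail of the free-flight time $\sigma$ and the Cercignani--Lampis redistribution of velocities, which is more delicate than the pure diffuse case treated in \cite{Bernou_Transport_Semigroup_2020}. Because $R$ retains correlation with the incoming velocity (unlike the wall Maxwellian, where the emerging velocity is independent of the incoming one), the moment estimates needed for the Lyapunov inequality do not factor cleanly, and one must use the precise Gaussian structure of $R$ and bounds on the Bessel function $I_0$ to show that the relevant moments of $|v|$ and of $\sigma$ after reflection remain controlled, with a genuine contraction gain. Establishing the uniform lower bound for the minorization also requires care near grazing velocities ($v\cdot n_x$ close to $0$), where $\sigma$ can be arbitrarily large; these trajectories must be excluded from the minorizing set while still capturing enough mass. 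Once both conditions are in hand with the correct exponents, the abstract subgeometric Harris theorem delivers the stated rate $(1+t)^{-(d+1-\epsilon)}$, the $\epsilon$ loss reflecting the logarithmic-type slack inherent to converting the drift and minorization into an explicit polynomial decay.
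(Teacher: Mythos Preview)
Your outline is the same strategy the paper uses---a subgeometric Harris argument combining a Lyapunov inequality for the weights $m_\alpha$ with a Doeblin--Harris minorization on sublevel sets, applied on the mass-zero hyperplane---so at the structural level you are on target.

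Two concrete points, however, are not addressed in your sketch and are precisely where the Cercignani--Lampis case departs from the pure diffuse case of \cite{Bernou_Transport_Semigroup_2020}. First, the generator-level drift inequality $\mathcal{L}^* m_\alpha \le -c\, m_{\alpha-1} + \kappa$ that you implicitly appeal to is not available here; the paper stresses that such an inequality is very hard, perhaps impossible, to obtain, and works instead with the \emph{integrated} version of Proposition~\ref{prop:ineq_lyapunov}. Second, and more importantly, the boundary flux $\int_0^T\!\int_{\Sigma_+} |v\cdot n_x|\,\gamma_+|S_s f|$ that appears in Green's formula is \emph{not} globally controlled by $(1+T)\|f\|_{L^1}$, unlike in the diffuse case; the lack of weak compactness of $K$ (Proposition~\ref{prop:not_regular}) rules this out. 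The paper's key device is a \emph{partial} flux control restricted to $\{|v|\le\Lambda\}$ (Lemma~\ref{lemma:control_flux}), combined with the observation that for $|u|$ large the post-collision weight integral $I_{u,x}$ of Lemma~\ref{lemma:Iux} is not merely bounded but strictly negative, because $\max((1-\rp),(1-\rt)^2)<1$ forces an average \emph{shrinking} of $|v|$ across a reflection. This splitting into small and large velocities, with the large-velocity part absorbed by the contraction in $I_{u,x}$, is the mechanism that closes the Lyapunov estimate; your proposal does not anticipate it and treats the moment control as a routine Gaussian computation, which it is not. Finally, there is no off-the-shelf ``abstract subgeometric Harris theorem'' invoked: the paper carries out the contraction argument by hand in a modified norm (Lemma~\ref{lemma:contraction_norm}) and then iterates, following \cite{Canizo_Mischler_2020}.
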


From this result, we deduce the existence of a unique steady state even in the case where the temperature varies. 

\begin{thm}
\label{thm:equilibrium}
There exists a unique $f_{\infty}$ such that, for all $\epsilon \in (0,\frac12)$, we have $f_{\infty} \in L^1_{m_{d - \epsilon}}(G)$, $0 \le f_{\infty}$, $\langle f_{\infty} \rangle = 1$, and 
\begin{align*}
v \cdot \nabla_x f_{\infty}(x,v) = 0, \qquad & (x,v) \in G, \\
\gamma_- f_{\infty}(x,v) = K \gamma_+ f_{\infty}(x,v), \qquad & (x,v) \in \Sigma_-.
\end{align*}
\end{thm}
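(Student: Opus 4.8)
The plan is to derive both existence and uniqueness directly from the contraction estimate of Theorem \ref{thm:Main}, combined with the mass conservation and $L^1$-contraction properties of $(S_t)_{t \ge 0}$ recorded after \eqref{eq:normalization_basic}. For existence, I would fix a single convenient datum $f_0 \in L^1_{m_{d+1-\epsilon}}(G)$ with $f_0 \ge 0$ and $\langle f_0 \rangle = 1$ (for instance a normalized wall Maxwellian) and show that $(S_t f_0)_{t \ge 0}$ is Cauchy in $L^1(G)$ as $t \to \infty$. Indeed, for $t' > t$ the semigroup property gives $S_{t'} f_0 - S_t f_0 = S_t(S_{t'-t} f_0 - f_0)$; since both $S_{t'-t} f_0$ and $f_0$ are nonnegative with mean $1$ by mass conservation, Theorem \ref{thm:Main} yields
\[ \|S_{t'} f_0 - S_t f_0\|_{L^1} \le \frac{C}{(1+t)^{d+1-\epsilon}} \, \|S_{t'-t} f_0 - f_0\|_{m_{d+1-\epsilon}}. \]
Provided the weighted norms $\|S_s f_0\|_{m_{d+1-\epsilon}}$ stay bounded uniformly in $s$ — or at worst grow more slowly than $(1+t)^{d+1-\epsilon}$, which is exactly the content of the Foster--Lyapunov moment estimate underlying Theorem \ref{thm:Main} — the right-hand side tends to $0$ uniformly in $t' > t$, so the limit $f_\infty := \lim_{t \to \infty} S_t f_0$ exists in $L^1(G)$ by completeness.

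Continuity of each $S_\tau$ (an $L^1$ contraction) then gives $S_\tau f_\infty = \lim_t S_{\tau + t} f_0 = f_\infty$, so $f_\infty$ is a stationary state; the weak transport identity $v \cdot \nabla_x f_\infty = 0$ and the boundary relation $\gamma_- f_\infty = K \gamma_+ f_\infty$ follow from the fact that $t \mapsto S_t f_\infty \equiv f_\infty$ solves \eqref{eq:main_pb} together with the trace theory of Section \ref{section:setting}. The remaining properties are inherited by passing to the limit: nonnegativity from the positivity of the semigroup (the kernel $R$ in \eqref{eq:def_R} is nonnegative), the normalization $\langle f_\infty \rangle = 1$ from $L^1$-continuity of the mean and $\langle S_t f_0 \rangle = \langle f_0 \rangle = 1$, and the membership $f_\infty \in L^1_{m_{d-\epsilon}}(G)$ from Fatou's lemma applied to the uniformly bounded moments $\sup_t \|S_t f_0\|_{m_{d-\epsilon}} < \infty$.

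For uniqueness, let $g_\infty$ be any state with the properties listed in the statement. Since $g_\infty$ and $f_\infty$ are both stationary with mean $1$, we have $S_t(f_\infty - g_\infty) = f_\infty - g_\infty$, and Theorem \ref{thm:Main} gives, for every $t$,
\[ \|f_\infty - g_\infty\|_{L^1} = \|S_t(f_\infty - g_\infty)\|_{L^1} \le \frac{C}{(1+t)^{d+1-\epsilon}} \|f_\infty - g_\infty\|_{m_{d+1-\epsilon}}, \]
whence $f_\infty = g_\infty$ upon letting $t \to \infty$. The main obstacle is precisely the weighted-norm bookkeeping in this last step: the statement only guarantees $f_\infty, g_\infty \in L^1_{m_{d-\epsilon}}(G)$, one power short of the weight $m_{d+1-\epsilon}$ needed to invoke Theorem \ref{thm:Main}. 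Closing this gap requires a bootstrap showing that any stationary solution automatically lies in $L^1_{m_{d+1-\epsilon}}(G)$, which I would extract either from the steady-state structure (constancy along characteristics together with the normalization and smoothing built into $K$) or, more systematically, from the same Lyapunov estimate that controls the moments along the flow. Establishing this uniform moment control, both for the Cauchy argument and for uniqueness, is the technical heart of the corollary; the contraction estimate itself does all the analytic work once the correct weighted spaces are in hand.
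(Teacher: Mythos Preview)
Your diagnosis of the uniqueness gap is correct, but the proposed fix is wrong: stationary solutions \emph{do not} in general belong to $L^1_{m_{d+1-\epsilon}}(G)$. Already in the pure diffuse case $\rp=\rt=1$, $\theta\equiv 1$, the steady state is an explicit Maxwellian lying in $L^1_{m_{d-\epsilon}}(G)\setminus L^1_{m_{d+1-\epsilon}}(G)$ (this is the content of the remark following Corollary~\ref{coroll:cvg-to-eq}). So no bootstrap of the type you suggest can succeed. The paper's remedy goes in the opposite direction: rather than upgrading the regularity of $f_\infty$, it \emph{downgrades} the weight in Theorem~\ref{thm:Main} by interpolation between $L^1_0(G)$ and $L^1_{m_{d+1-\epsilon},0}(G)$ (Corollary~\ref{coroll:Interpol}), obtaining a decay estimate that requires only $L^1_{m_{d-\epsilon}}$ data (Lemma~\ref{lemma:interpolation_final}). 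Uniqueness then follows exactly by the argument you wrote, but with this interpolated estimate in place of Theorem~\ref{thm:Main}.

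Your existence argument also has a gap. The Fatou step ``$\sup_t\|S_t f_0\|_{m_{d-\epsilon}}<\infty$'' is not delivered by the Lyapunov inequality \eqref{eq:main_ineq_lyapunov}, which only gives $\|S_T f_0\|_{m_\alpha}\le \|f_0\|_{m_\alpha}+b(1+T)\|f_0\|_{L^1}$, i.e.\ linear growth. (Likewise, the Cauchy-in-$L^1$ bound $\|S_{t'}f_0-S_t f_0\|_{L^1}\lesssim (1+t)^{-(d+1-\epsilon)}(1+t'-t)$ is not uniform in $t'>t$ without further work.) The paper avoids both issues by exploiting the sharper modified-norm contraction of Lemma~\ref{lemma:contraction_norm}: with $g_k=S_{kT}g$ and $f_k=g_{k+1}-g_k$, inequality \eqref{eq:Contraction_Lemma} implies that $(\vertiii{f_k})_k$ is nonincreasing and that the telescoping sum $\sum_k\|f_k\|_{m_{d-\epsilon}}$ is controlled, yielding that $(g_k)$ is Cauchy \emph{directly in} $L^1_{m_{d-\epsilon}}(G)$. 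This simultaneously produces the limit and its weighted integrability, with no appeal to Fatou or to uniform moment bounds along the flow.
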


Regarding the convergence towards the steady state, we can deduce the following corollary from an interpolation argument applied to the result of Theorem \ref{thm:Main}.

\begin{coroll}
\label{coroll:cvg-to-eq}
For all $\epsilon \in (0,\frac12)$, there exists a constant $C' > 0$ such that for all $t \ge 0$, for all $f \in L^1_{m_{d-\epsilon}}(G)$ with $\langle f \rangle = 1$, for $f_{\infty}$ given by Theorem \ref{thm:equilibrium}, 
\[ \|S_t(f-f_{\infty})\|_{L^1} \le \frac{C'}{(1+t)^{d-\epsilon}} \|f-f_{\infty}\|_{m_{d-\epsilon}}. \]
\end{coroll}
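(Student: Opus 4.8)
The plan is to bootstrap the strong estimate of Theorem \ref{thm:Main}, which requires $d+1-\epsilon$ moments and yields the rate $(1+t)^{-(d+1-\epsilon)}$, down to the weaker weight $m_{d-\epsilon}$ at the cost of one power of decay, using the $L^1$-contraction of $(S_t)_{t\ge0}$ as the low-moment endpoint. Write $h := f - f_\infty$. Since $\langle f\rangle = \langle f_\infty\rangle = 1$ we have $\langle h\rangle = 0$, and since $f\in L^1_{m_{d-\epsilon}}(G)$ by hypothesis while $f_\infty\in L^1_{m_{d-\epsilon}}(G)$ by Theorem \ref{thm:equilibrium}, there holds $h\in L^1_{m_{d-\epsilon}}(G)$. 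As $f_\infty$ is stationary, $S_t h = S_t f - f_\infty$, so it suffices to estimate $\|S_t h\|_{L^1}$. The two ingredients will be: (i) Theorem \ref{thm:Main} applied to the pair $(f,f_\infty)$, valid for mean-zero data lying in $L^1_{m_{d+1-\epsilon}}(G)$; and (ii) the contraction $\|S_t\phi\|_{L^1}\le\|\phi\|_{L^1}$ recalled in Section \ref{section:setting}. The difficulty is that $h$ need not belong to $L^1_{m_{d+1-\epsilon}}(G)$, so (i) cannot be applied to $h$ directly.

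I would resolve this by a truncation--interpolation argument in the weight. For $R\ge1$ to be optimized later, split $h = h\,\mathbf{1}_{\{\langle x,v\rangle\le R\}} + h\,\mathbf{1}_{\{\langle x,v\rangle> R\}} =: h_R^\flat + h_R^\sharp$. The bulk part $h_R^\flat$ now lies in $L^1_{m_{d+1-\epsilon}}(G)$, with $\|h_R^\flat\|_{m_{d+1-\epsilon}}\le R\,\|h\|_{m_{d-\epsilon}}$ since $\langle x,v\rangle\le R$ on its support, while the tail is small in $L^1$: $\|h_R^\sharp\|_{L^1}\le R^{-(d-\epsilon)}\|h\|_{m_{d-\epsilon}}$. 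However $h_R^\flat$ is no longer mean-zero, so (i) does not apply to it. To restore the zero-mean condition I introduce a fixed reference density $\rho\ge0$ with $\langle\rho\rangle=1$ and $\rho\in L^1_{m_{d+1-\epsilon}}(G)$, and decompose
\[ h = \big(h_R^\flat - \langle h_R^\flat\rangle\,\rho\big) + \big(h_R^\sharp + \langle h_R^\flat\rangle\,\rho\big). \]
The first bracket is mean-zero and lies in $L^1_{m_{d+1-\epsilon}}(G)$, hence is eligible for (i); since $\langle h\rangle=0$ forces $\langle h_R^\flat\rangle = -\langle h_R^\sharp\rangle$, we get $|\langle h_R^\flat\rangle|\le\|h_R^\sharp\|_{L^1}\le R^{-(d-\epsilon)}\|h\|_{m_{d-\epsilon}}$, so the second bracket is bounded in $L^1$ by $2\,\|h_R^\sharp\|_{L^1}$ and is handled by (ii).

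Combining the two bounds and absorbing constants yields
\[ \|S_t h\|_{L^1}\le C\Big((1+t)^{-(d+1-\epsilon)}\,R + R^{-(d-\epsilon)}\Big)\|h\|_{m_{d-\epsilon}}, \]
and the choice $R = 1+t$, admissible since $R\ge1$ for every $t\ge0$, balances the two terms and produces exactly the claimed rate $(1+t)^{-(d-\epsilon)}$. The one genuine subtlety, and the step I would treat most carefully, is the construction of the reference density $\rho$: the weight $\langle x,v\rangle = 1+\sigma(x,v)+\sqrt{|v|}$ blows up like $|v|^{-1}$ as $|v|\to0$ through the term $\sigma$, so any density carrying mass near $\{v=0\}$ --- in particular $f_\infty$ itself, which by Theorem \ref{thm:equilibrium} belongs only to $L^1_{m_{d-\epsilon}}(G)$ --- cannot have finite $m_{d+1-\epsilon}$-norm. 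It is therefore essential to take $\rho$ smooth, normalized, and supported in a region $\{\,\delta\le|v|\le\delta^{-1}\,\}$ bounded away from $v=0$ and compact in $x$, on which $\sigma$ and $\langle x,v\rangle$ are bounded, so that $\rho\in L^1_{m_{d+1-\epsilon}}(G)$ as required.
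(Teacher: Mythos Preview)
Your argument is correct and complete: the truncation $h = h_R^\flat + h_R^\sharp$, the mean-correction via a reference density $\rho$ supported away from $\{v=0\}$, and the optimization $R=1+t$ all work exactly as you describe, and your care about the singularity of $\sigma(x,v)$ near $v=0$ is warranted.

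The paper obtains the same conclusion by a different route: rather than a hands-on $K$-method splitting, it invokes an abstract interpolation result (Corollary~\ref{coroll:Interpol}, a Stein--Weiss type statement for weighted $L^1$ spaces equipped with a bounded projection) between the endpoints $L^1_0(G)\to L^1_0(G)$ with norm $\le 1$ and $L^1_{m_{d+1-\epsilon},0}(G)\to L^1_0(G)$ with norm $\le C(1+t)^{-(d+1-\epsilon)}$. The zero-mean constraint is absorbed into the projection $\Pi h = h - \tfrac{M_1(v)|v|^2}{|\Omega|}\langle h\rangle$; note that the paper's reference density $M_1(v)|v|^2$ plays exactly the role of your $\rho$, and the factor $|v|^2$ is precisely what makes it belong to $L^1_{m_{d+1-\epsilon}}(G)$ despite the blow-up of $\sigma$ at small velocities --- the same issue you identified. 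What your approach buys is self-containment and transparency: no black-box interpolation theorem is needed, and the mechanism (trade one power of $R$ against one power of $t$) is visible. What the paper's approach buys is reusability: the same interpolation corollary is invoked several times in Section~\ref{section:main}, so packaging it once is economical.
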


\begin{rmk}
As usual when using the subgeometric Harris' theorem, we can not apply directly Theorem \ref{thm:Main} to study the convergence towards the steady state, because we do not have in general $f_{\infty} \in L^1_{d+1-\epsilon}(G)$ for $\epsilon \in (0,\frac12)$. In particular, it is known that the explicit form in the case $\rt = \rp = 1$, $\theta \equiv 1$ is given by a Maxwellian which belongs to $L^1_{m_{d-\epsilon}}(G) \setminus L^1_{m_{d+1-\epsilon}}(G)$ for all $\epsilon \in (0,1)$. This limiting role of the steady state is well-known in the probabilistic counterpart of the theory used in this paper, see for instance Douc-Fort-Guillin \cite{Douc_Subgeometric_2009} and Hairer \cite{Hairer_2016}. 
\end{rmk}

\begin{rmk}
The hypothesis $f \in L^1_{m_{d-\epsilon}}(G)$ for some $\epsilon \in (0,\frac12)$ is satisfied if $f$ is bounded. For instance, the usual Maxwellian steady state of the pure diffure reflection satisfies this hypothesis.
\end{rmk}

\begin{rmk}
The conclusion from Corollary \ref{coroll:Interpol} is that the rate of convergence towards the steady state of the free-transport equation with Cercignani-Lampis boundary condition is better than $\frac{1}{t^{d}}$ (up to a log factor) when starting from an initial datum with enough regularity. As this is also the rate obtained for the pure diffuse boundary condition (see for instance \cite{kuo_free_2013} for the spherically symmetric case, and \cite{Bernou_Transport_Semigroup_2020}, \cite{bernou_fournier_collisionless_2019} for the general case), which corresponds to the particular case $\rp = \rt = 1$, and since it is known that this rate is optimal in this context, we can conclude to the optimality for the general Cercignani-Lampis boundary condition. 
\end{rmk}

\begin{rmk}
Our proof of Theorem \ref{thm:Main} (and thus of Corollary \ref{coroll:Interpol}) is constructive, i.e. the constant $C$ appearing in Theorem \ref{thm:Main} can be computed explicitely, although it might depend in a very complicated manner of the geometry of $\Omega$. An interesting fact is that the proof requires some control of the flux of the solution at the boundary, provided by Lemma \ref{lemma:control_flux}. The constant appearing in this flux takes the form $\frac{M}{1-m}$, with $m$ a positive power of $\max((1-\rp),(1-\rt)^2)$, and $M$ a constant independent of $\rp$ and $\rt$. Unsurprisingly as $(\rp, \rt) \to (0,0)$ (i.e., as we retain more and more information from the incoming velocities, converging towards the pure specular boundary condition), this constant grows and at the limit we lose the control of the flux. The same occurs as $(\rp,\rt) \to (0,2)$, i.e. as we converge towards the bounce-back boundary condition. 
\end{rmk}

\begin{rmk}
Rather than weights in the form of power of  
\[ \langle x, v \rangle = (1 + \sigma(x,v) + \sqrt{|v|}) \]
we can extend all three results to weights in the form of power of 
\[ _\delta \langle x, v \rangle = (1 + \sigma(x,v) + |v|^{2 \delta}) \]
for any $\delta \in (0,\frac12)$. The rates of convergence are then unmodified, although the constants appearing in front of them change.
\end{rmk}

\subsection{A toy model for the study of the velocity flow}
\label{subsec:toy_model}

In Section \ref{Section:steady-flow}, we study the free-transport equation in the box $[0,1]^2 \subset \RR^2$ with periodic boundary conditions at $x_1 = 0$ and $x_1 = 1$ and two Cercignani-Lampis boundary conditions at $x_2 = 0$ and $x_2 = 1$. Hence the model is close, in spirit, to the one presented by Kosuge et al \cite{Kosuge_steady_2011} on their work on the velocity flow. We allow $\rt$ and $\rp$ to vary with the boundary, taking $\rt = \rp = 1$ at $x_2 = 1$ and $\rp = \rt(2-\rt)$ with $\rp \in (0,1)$ at $x_2 = 0$. Therefore we have a pure diffuse reflection at $x_2 = 1$ and a more general Cercignani-Lampis boundary condition at $x_2 = 0$. We set the temperature to be $1$ at $x_2 = 1$ and we take $\theta_2 : (x_1,0) \to (1,\infty)$ to be the function giving the temperature at $x_2 = 0$. 

With this at hand, we provide an explicit steady state for this problem, giving a first instance of an explicit steady state for a problem in which the Cercignani-Lampis boundary condition with $(\rt,\rp) \not = (1,1)$ is considered. We also prove that this steady state exhibits no velocity flow, hinting that the presence of a piece of the boundary in which a decorrelation mechanism takes place (the pure diffuse boundary condition) might suffice to cancel all such flows. We plan to pursue in the near future, with probabilistic methods, the rigorous investigation of the velocity flow for models involving a Cercignani-Lampis boundary condition.

\subsection{Proof strategy}

The key result of this paper is Theorem \ref{thm:Main}. Its proof is purely deterministic: although we use some known facts from probability theory to shorten some computations, those could be adapted to be written entirely without this framework. We adapt the method of \cite{Bernou_Transport_Semigroup_2020}, more precisely we prove a subgeometric Harris' theorem for the particular choice of weights involved here. The idea of this deterministic adaptation to the previously known probabilistic results of Douc-Fort-Guillin \cite{Douc_Subgeometric_2009} and Hairer \cite{Hairer_2016} is due to Ca\~nizo and Mischler \cite{Canizo_Mischler_2020}. We provide a self-contained proof, except for the interpolation arguments which are taken directly from \cite{Bernou_Transport_Semigroup_2020}. Let us detail the approach, and the main adaptations required to handle the more involved Cercignani-Lampis boundary condition compared to the Maxwell boundary condition treated in \cite{Bernou_Transport_Semigroup_2020}. 

We introduce the operator $\mathcal{L}$ such that \eqref{eq:main_pb} rewrites as a Cauchy problem:
\begin{align*}
\left\{
\begin{array}{lll}
\partial_t f &= \mathcal{L} f & \hbox{ in } \RR_+ \times \bar{\Omega} \times \RR^d, \\
f(0,\cdot) &= f_0(\cdot) & \hbox{ in } G.
\end{array}
\right.
\end{align*}
There are two main tools to prove a subgeometric Harris' theorem for such a problem. The first one is to derive an inequality of the form
\begin{align*}
\mathcal{L}^* w_1 \le - w_0 + \kappa,
\end{align*} 
for some $\kappa > 0$, for $\mathcal{L}^*$ the adjoint operator of $\mathcal{L}$, for some weights $(w_0,w_1)$ with $1 \le w_0 \le w_1$. Typically one wants to obtain several inequalities of this kind, with various choices of weights instead of $(w_0,w_1)$. In our case, such inequality is very hard, perhaps impossible, to derive. On the other hand, we can obtain an integrated version of the inequality, i.e. the existence of two constants $b_1,C_1 > 0$ such that for all $T > 0$, $f \in L^1_{m_{d+1-\epsilon}}(G)$,
\begin{align}
\label{eq:ineq_Lyap_Intro}
\| S_T f\|_{m_{d+1-\e}} + C_1 \int_0^T \|S_s f\|_{m_{d-\e}} ds \le \|f\|_{m_{d+1-\e}} + b_1(1+T) \|f\|_{L^1}.
\end{align}
The existence of such weights relies heavily on the fact that
\[ v \cdot \nabla_x \sigma(x,v) = -1, \]
as noticed for instance by Esposito, Guo, Kim and Marra \cite{Esposito2013}. This approach was also taken in \cite{Bernou_Transport_Semigroup_2020}, however, there is, in the case of the Cercignani-Lampis boundary condition, a key difficulty in the control of the flux compared to the case of the diffuse boundary condition. While, in the latter, we had the inequality
\begin{align*}
\int_0^T \int_{\pO} \int_{\Sigma^x_+ } |v \cdot n_x| \gamma_+ |S_t f|(x,v) \, \d v \d \zeta(x) \d s \leq C (1+T)\|f\|_{L^1}, 
\end{align*}
for some $C > 0$, where $d\zeta(x)$ is the surface measure at $x \in \pO$, such an inequality does not hold in our context. Instead, we derive a partial control of the flux in Lemma \ref{lemma:control_flux}, given, for all $\Lambda > 0$, by the existence of a constant $C_{\Lambda} > 0$ such that 
\begin{align*}
\int_0^T \int_{\pO} \int_{\{v \in \Sigma^x_+, |v| \leq \Lambda \}} |v \cdot n_x| \gamma_+ |S_t f|(x,v) \, \d v \d \zeta(x) \d s \leq C_\Lambda (1+T)\|f\|_{L^1},
\end{align*}
and on the fact that, since $\rp \in (0,1)$ and $(1-\rt)^2 \in (0,1)$, the outcoming velocity has, on average, a smaller norm than the incoming one. 

The second ingredient to adapt the subgeometric Harris' theory to our context is a positivity result, the Doeblin-Harris condition, for the semigroup $(S_t)_{t \ge 0}$. This is given by Theorem \ref{thm:Doeblin-Harris} in the form of the following inequality: for any $\Lambda \ge 2$, there exist $T(\Lambda) > 0$ and a non-negative, non-trivial measure $\nu$ on $G$ with $\nu \not \equiv 0$ such that for all $(x,v) \in G$, for all $f_0 \in L^1(G)$, $f_0 \ge 0$, 
\begin{align}
\label{eq:DH-Intro}
S_{T(\Lambda)} f_0(x,v) \ge \nu(x,v) \int_{\{(y,w) \in G, \langle y, w \rangle \le \Lambda\}} f_0(y,w) \, \d y \d w.
\end{align} 
To prove Theorem \ref{thm:Main}, we combine the two results \eqref{eq:ineq_Lyap_Intro} and \eqref{eq:DH-Intro} as in \cite{Canizo_Mischler_2020, Bernou_Transport_Semigroup_2020}. We assume that $g=0$ so that $f \in L^1_{m_{d+1-\e}}(G)$ with $\langle f \rangle = 0$,  and for $T > 0$ large enough we introduce the modified norm
\[ \vertiii{.}_{m_{d+1-\e}} = \|.\|_{L^1} + \beta \|.\|_{m_{d+1-\e}} + \alpha \|.\|_{m_{d-\e}} \]
for two constants $\alpha, \beta > 0$ well-chosen, depending on $T$. We prove first a contraction result for this new norm
\begin{align}
\label{eq:ineq_modified_norm_intro}
\vertiii{S_T f}_{m_{d+1-\e}} \le \vertiii{f}_{m_{d+1-\e}}.
\end{align}
Then, we introduce two auxiliary weights so that $1 \le w_0 \le w_1 \le m_{d+1-\e}$ for which, with a similar argument, for some modified norm $\vertiii{.}_{w_1}$, for $T > 0$ as above and for $\tilde{\alpha} > 0$ constant, we can derive the following inequality
\begin{align}
\label{eq:ineq_modified_w1_intro}
\vertiii{S_T f}_{w_1} + 2 \tilde \alpha \|f\|_{w_0} \le \vertiii{f}_{w_1}. 
\end{align}
We combine \eqref{eq:ineq_modified_norm_intro} and \eqref{eq:ineq_modified_w1_intro} repeatedly and use the inequalities between the weights to conclude. 

Once Theorem \ref{thm:Main} is established, the proof of Theorem \ref{thm:equilibrium} follows from a refined version of \eqref{eq:ineq_modified_norm_intro}, and Corollary \ref{coroll:cvg-to-eq} is derived from Theorem \ref{thm:equilibrium} via an interpolation argument. 

The proof of the results mentioned in Subsection \ref{subsec:toy_model} are obtained directly by studying the candidate steady state which is itself obtained by the method of characteristics. While the computations are easy in the case where the temperature is constant, a few tricks are necessary when it is allowed to vary. They rely heavily on earlier computations performed by Chen \cite{Chen_CL_2020}. 

\subsection{Plan of the paper}

In Section \ref{section:setting}, we show that the problem \eqref{eq:main_pb} is well-posed, that the associated semigroup is a contraction in $L^1(G)$, we prove that the Cercignani-Lampis boundary condition is not regular in the sense of \cite{Lods_2020} and we introduce some probabilistic tools. With the help of those, we prove in Section \ref{section:Lyapunov} the inequality \eqref{eq:ineq_Lyap_Intro} for a variety of weights of the form $m_{\gamma}$, $\gamma \in (1,d+1)$, deriving along the way the partial control of the flux mentioned above. The inequality \eqref{eq:DH-Intro} is derived in Section \ref{section:DH}. The proofs of Theorems \ref{thm:Main}, \ref{thm:equilibrium} and Corollary \ref{coroll:cvg-to-eq} are given in Section \ref{Section:steady-flow}, starting from the one of Theorem \ref{thm:Main}, from which Theorem \ref{thm:equilibrium} and then Corollary \ref{coroll:cvg-to-eq} are obtained. Finally, Section \ref{Section:steady-flow} is devoted to the study of our toy model.

\section{Setting, elementary properties, preliminary notions}
\label{section:setting}

\subsection{Notations and associated semigroup}

We first set some notations. We write $\bar{B}$ for the closure of any set $B$. We denote by $\mathcal{D}(E) := C^1_c(E)$ the space of test functions, $C^1$ with compact support, on $E$. We write $\d \zeta(x)$ for the surface measure at $x \in \pO$. We denote by $\mathcal{H}$ the $d-1$ dimensional Hausdorff measure. 

For a function $f \in L^{\infty}([0,\infty); L^1(\Omega \times \RR^d))$, admitting a trace $\gamma f$ at the boundary, we write $\gamma_{\pm} f$ for its restriction to $(0,\infty) \times \Sigma_{\pm}$. This corresponds to the trace obtained in Green's formula, see Mischler \cite{Mischler_Vlasov_1999}.
Note first that the boundary operator $K$ given by \eqref{eq:def_K} has norm $1$. This follows easily from the normalization property \eqref{eq:normalization_basic}:

\begin{lemma}[$K$ is non-negative and stochastic]
\label{lemma:K}
The boundary operator $K$ defined by \eqref{eq:def_K} is non-negative, and satisfies, for all $t \ge 0$, $x \in \pO$, for all $f$ regular enough so that both integrals are well-defined,
\begin{align}
\label{eq:normK1}
\int_{\Sigma_-^x} K\gamma_+ f(t,x,v) \, |v \cdot n_x| \, \d v = \int_{\Sigma_+^x} \gamma_+ f(t,x,v) \, |v \cdot n_x| \d v.
\end{align}
\end{lemma}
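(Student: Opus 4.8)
The plan is to split the statement into its two assertions and treat each directly from the explicit form of the kernel \eqref{eq:def_R} together with the normalization property \eqref{eq:normalization_basic}.

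For the non-negativity, I would inspect the kernel $R(u \to v; x)$ factor by factor. The prefactors $\frac{1}{\theta(x)\rp}$ and $(2\pi\theta(x)\rt(2-\rt))^{-\frac{d-1}{2}}$ are strictly positive since $\theta(x) > 0$, $\rp \in (0,1)$ and $\rt \in (0,2)$ (so that $\rt(2-\rt) > 0$); the three Gaussian factors are positive; and the modified Bessel function satisfies $I_0(y) = \frac{1}{\pi}\int_0^{\pi} e^{y\cos\phi}\,\d\phi > 0$ for every $y \in \RR$, being the integral of a strictly positive integrand, by \eqref{eq:defI0}. Hence $R(u \to v; x) > 0$ for all admissible $(x,u,v)$. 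Since $K\phi$ is obtained by integrating $\phi$ against $R$ and the positive weight $|u\cdot n_x|$ over $\Sigma_+^x$, the operator $K$ maps non-negative functions to non-negative functions, which is the first claim.

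For the stochasticity relation \eqref{eq:normK1}, the key step is a single application of Fubini's theorem. Starting from the definition \eqref{eq:def_K} of $K\gamma_+ f$ and integrating it against $|v\cdot n_x|\,\d v$ over $\Sigma_-^x$, I would interchange the order of the $u$ and $v$ integrations; this is legitimate either by Tonelli's theorem when $\gamma_+ f \ge 0$ (using $R \ge 0$) or, in general, under the stated assumption that $f$ is regular enough for both integrals to be well-defined. After the interchange, the inner integral reads $\int_{\Sigma_-^x} R(u \to v; x)\,|v\cdot n_x|\,\d v$, which equals $1$ by the normalization property \eqref{eq:normalization_basic} for each fixed $(x,u) \in \Sigma_+$. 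What remains is then exactly $\int_{\Sigma_+^x} \gamma_+ f(t,x,u)\,|u\cdot n_x|\,\d u$, the right-hand side of \eqref{eq:normK1}.

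There is no serious obstacle in this argument; the only points requiring mild care are the positivity of $I_0$ (immediate from its integral representation) and the justification of the interchange of integrals, which is precisely why the statement is phrased for $f$ regular enough that both sides make sense. The substantive input is the normalization identity \eqref{eq:normalization_basic} itself, but this is quoted from \cite[Lemma 10]{Chen_CL_2020} and may be taken as given here.
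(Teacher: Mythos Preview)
Your proposal is correct and follows essentially the same approach as the paper: non-negativity is read off directly from the explicit form of $R$ (the paper simply calls this ``straightforward in view of \eqref{eq:def_K} and \eqref{eq:def_R}'', while you spell out the positivity of each factor), and the stochasticity identity is obtained by inserting the definition of $K$, applying Fubini's theorem, and invoking the normalization \eqref{eq:normalization_basic}. The only difference is the level of detail, not the argument itself.
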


\begin{proof}
The non-negativity of $K$ is straightforward in view of \eqref{eq:def_K} and \eqref{eq:def_R}. Recall from \eqref{eq:normalization_basic} that, for all $x \in \pO$, $u \in \Sigma_+^x$, 
\begin{align}
\label{eq:normalization}
 \int_{\Sigma_-^x} R(u \to v;x) \, |v \cdot n_x| \, \d v = 1.
 \end{align}
 Hence, 
 \begin{align*}
 \int_{\Sigma_-^x} K\gamma_+ f(t,x,v) \, |v \cdot n_x| \, \d v &= \int_{\Sigma_-^x} |v \cdot n_x| \Big( \int_{\Sigma_+^x} \gamma_+ f(t,x,u) \, |u \cdot n_x| \, R(u \to v;x) \, \d u \Big) \d v \\
 &= 
 \int_{\Sigma_+^x} |u \cdot n_x| \, \gamma_+ f(t,x,u) \Big( \int_{\Sigma_-^x} R(u \to v; x) \, |v \cdot n_x| \, \d v \Big) \d u
 \end{align*}
 where we used Fubini's theorem, and the conclusion follows. 
\end{proof}

Since the boundary operator is conservative and stochastic, the problem \eqref{eq:main_pb} is governed by a $C_0$-stochastic semigroup $(S_t)_{t \ge 0}$, i.e. a non-negative, mass-conservative semigroup such that, for $f_0 \in L^1(G)$, for all $t \ge 0$, $S_t f_0 = f(t,\cdot)$ is the unique solution in $L^{\infty}([0,\infty); L^1(G))$ to \eqref{eq:main_pb} taken at time $t$. For the sake of completeness, we check those two properties and show that $(S_t)_{t \ge 0}$ is a contraction semigroup in the following theorem.

\begin{thm}[Positivity and mass conservation \cite{Cercignani_Lampis_1971}]
\label{thm:mass_&_L1_contraction}
Let $f \in L^1(G)$. For all $t \ge 0$, $\langle S_t f \rangle = \langle f \rangle$. Moreover, we have
\[ \|S_t f\|_{L^1} \le \|f\|_{L^1},\]
 and, if $f$ is non-negative, so is $S_t f$. 
\end{thm}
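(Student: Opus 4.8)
The plan is to establish all three properties from a single tool: Green's identity for the free-transport operator, combined with the stochasticity relation \eqref{eq:normK1} of Lemma \ref{lemma:K}. Throughout I rely on the trace theory for transport equations recalled in \cite{Mischler_Vlasov_1999}, which guarantees that a solution $f \in L^{\infty}([0,\infty); L^1(G))$ of \eqref{eq:main_pb} admits well-defined traces $\gamma_{\pm} f$ on $(0,\infty) \times \Sigma_{\pm}$ for which the flux integrals below are finite, and which satisfies
\begin{align*}
\frac{\d}{\d t} \int_G f(t,x,v) \, \d v \d x = - \int_{\Sigma} (v \cdot n_x) \, \gamma f(t,x,v) \, \d \zeta(x) \d v.
\end{align*}
I would split the boundary integral over $\Sigma_+$ and $\Sigma_-$, using that $(v \cdot n_x) = |v \cdot n_x|$ on $\Sigma_+$ and $(v \cdot n_x) = -|v \cdot n_x|$ on $\Sigma_-$.

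For mass conservation, I substitute the boundary condition $\gamma_- f = K \gamma_+ f$ into the incoming part, so that the right-hand side reads
\begin{align*}
- \int_{\pO} \Big( \int_{\Sigma_+^x} |v \cdot n_x| \, \gamma_+ f \, \d v - \int_{\Sigma_-^x} |v \cdot n_x| \, K\gamma_+ f \, \d v \Big) \d \zeta(x).
\end{align*}
By \eqref{eq:normK1} the inner difference vanishes pointwise in $x \in \pO$, hence $\frac{\d}{\d t} \langle S_t f \rangle = 0$, which gives $\langle S_t f \rangle = \langle f \rangle$ after integrating in time.

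For the $L^1$ contraction, the key is a renormalization step: since the transport field $(x,v) \mapsto v$ is divergence-free in $x$, the function $|f|$ solves the same transport equation in the renormalized (DiPerna--Lions) sense, with trace $\gamma |f| = |\gamma f|$. Applying Green's identity to $|f|$ and splitting the flux as above, the incoming term becomes $\int_{\Sigma_-^x} |v \cdot n_x| \, |\gamma_- f| \, \d v = \int_{\Sigma_-^x} |v \cdot n_x| \, |K \gamma_+ f| \, \d v$. Using that $K$ is linear and non-negative (Lemma \ref{lemma:K}), one has the pointwise bound $|K \gamma_+ f| \le K |\gamma_+ f| = K \gamma_+ |f|$, and then \eqref{eq:normK1} applied to $|f|$ yields
\begin{align*}
\int_{\Sigma_-^x} |v \cdot n_x| \, |\gamma_- f| \, \d v \le \int_{\Sigma_-^x} |v \cdot n_x| \, K \gamma_+ |f| \, \d v = \int_{\Sigma_+^x} |v \cdot n_x| \, \gamma_+ |f| \, \d v.
\end{align*}
Consequently the incoming flux of $|f|$ is dominated by its outgoing flux, giving $\frac{\d}{\d t} \| S_t f \|_{L^1} \le 0$ and hence the contraction. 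Positivity preservation follows from the same circle of ideas: if $f_0 \ge 0$, then either I argue along characteristics (the value re-entering the domain at the boundary is $\gamma_- f = K \gamma_+ f \ge 0$ by non-negativity of $K$, and the free flow preserves the sign), or equivalently I run the renormalization argument on the negative part $f^- := \max(-f,0)$ to conclude $\| (S_t f)^- \|_{L^1} \le \| f_0^- \|_{L^1} = 0$.

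The main obstacle is not the algebra above but the rigorous justification of the trace and renormalization machinery: one must verify that $\gamma_{\pm} f$ exist with enough integrability against $|v \cdot n_x| \, \d v \d \zeta(x)$ for the boundary fluxes to be finite, and that the chain rule $\partial_t |f| + v \cdot \nabla_x |f| = 0$ holds in the weak sense so that Green's identity may legitimately be applied to $|f|$ (and to $f^-$). For the constant-in-$x$ transport field this is classical, and I would invoke the trace results and renormalization framework of \cite{Mischler_Vlasov_1999} rather than reprove them; the remaining manipulations are then exactly the cancellation and domination arguments driven by \eqref{eq:normK1} and the non-negativity of $K$.
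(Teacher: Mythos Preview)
Your proposal is correct and follows essentially the same route as the paper: Green's formula for the transport operator, the stochasticity identity \eqref{eq:normK1}, and the pointwise bound $|K\phi| \le K|\phi|$ are exactly the tools used, and the justification via the trace and renormalization results of \cite{Mischler_Vlasov_1999} is precisely what the paper invokes. The one place where the paper's argument is slightly slicker is positivity: rather than rerunning the contraction argument on $f^-$ (which works, once one checks $(K\phi)^- \le K\phi^-$), the paper simply writes $\|(S_t f)_-\|_{L^1} = \tfrac12(\|S_t f\|_{L^1} - \langle S_t f\rangle) \le \tfrac12(\|f\|_{L^1} - \langle f\rangle) = \|f_-\|_{L^1} = 0$, deducing positivity as an immediate algebraic corollary of the two properties already established.
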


\begin{proof}

\textbf{Step 1.} We write $f(t,x,v)$ for $S_t f(x,v)$ for all $(t,x,v) \in [0,\infty) \times G$, $\gamma f$ for the corresponding trace on $(0,\infty) \times \Sigma$. Using Green's formula, we have, for all $t \ge 0$,
\[ \frac{d}{dt} \int_G f(t,x,v) \, \d v \d x = - \int_G v \cdot \nabla_x f(t,x,v) \, \d v \d x = - \int_{\Sigma} \gamma f(t,x,v) \, (v \cdot n_x) \, \d v \d \zeta(x), \]
and, using \eqref{eq:normK1} as well as the boundary condition satisfied by $f$, we conclude that 
\begin{align*}
\frac{d}{dt} \langle S_t f \rangle = 0.
\end{align*}

\vspace{.3cm}

\textbf{Step 2.} By triangle inequality, for almost all $t \ge 0$, $x \in \pO$,
\begin{align*}
\int_{\Sigma_-^x} |K f(t,x,v)| \, |v \cdot n_x| \, \d v &= \int_{\Sigma_-^x} |v \cdot n_x| \Big| \int_{\Sigma_+^x} f(t,x,u) \, |u \cdot n_x| \, R(u \to v; x) \, \d u \Big| \, \d v \\
&\le \int_{\Sigma_+^x} |f(t,x,u)| \, |u \cdot n_x| \Big( \int_{\Sigma_+^x} |v \cdot n_x| \, R(u \to v; x) \, \d v \Big) \d u \\
&= \int_{\Sigma_+^x} |f(t,x,u)| \, |u \cdot n_x| \, \d u,
\end{align*}
where we used the positivity of $R$ and Tonelli's theorem to derive the inequality. Using Green's formula and the equation satisfied by $f$, we find
\begin{align*}
\frac{d}{dt} \int_G |f(t,x,v)| \, \d v \d x \le \int_{\Sigma} |f(t,x,u)| \, (u \cdot n_x) \, \d u \d \zeta(x) 
\end{align*}
and combining the boundary condition satisfied by $f$ with the previous inequality, we conclude that
\begin{align*}
\frac{d}{dt} \|S_t f\|_{L^1} \le 0. 
\end{align*}

\vspace{.3cm}

\textbf{Step 3: Positivity.} Note that $(S_t f)_- = \frac{|S_t f| - S_t f}{2}$. Assume that $f \ge 0$, then $f_- = 0$, and, for all $t \ge 0$, since $\langle S_t f \rangle = \langle f \rangle$ (by Step 1) and since $(S_t)_{t \ge 0}$ is a contraction in $L^1$ (by Step 2),
\begin{align*}
\| (S_t f)_-\|_{L^1} &= \int_G \frac{|S_t f| - S_t f}{2} \, \d v \d x  \\
&= 
\frac12 \Big( \|S_t f\|_{L^1} - \langle S_t f \rangle \Big) \\
&\le 
\frac12 \Big( \|f\|_{L^1} - \langle f \rangle \Big) = \int_G \frac{|f| - f}{2} \, \d v \d x = \|f_-\|_{L^1} = 0,
\end{align*}
and since $(S_t f)_- \ge 0$ almost everywhere (a.e.) on $G$, we conclude that $(S_t f)_- = 0$ a.e. on $G$. 
\end{proof}

In the remaining part of this paper, we will investigate the decay properties of the problem at the level of this semigroup $(S_t)_{t \ge 0}$.

\subsection{Probabilistic facts and regularity}

We briefly present the Rice distribution and a connection to Gaussian random variables. For a deeper exposition of this probabilistic material, we refer to Kobayashi, Mark and Turin \cite[Section 7.5.1 and 7.5.2]{Kobayashi_2009}. We write $Y \sim \mathcal{N}(m, \Delta)$ when $Y$ is a Gaussian random vector on $\RR^n$, $n \ge 1$ with mean $m \in \RR^n$ and co-variance matrix $\Delta \in \mathcal{M}_s^n$ the space of symmetric matrices of size $n \times n$, and we write $I_n$ for the identity matrix of size $n \times n$. If $X$ and $Y$ are two random variables, we write $X \overset{\mathcal{L}}{=} Y$ if $X$ and $Y$ have the same distribution. 

\begin{defi}
\label{defi:Rice} 
Let $\mu \in \RR$, $\sigma^2 > 0$. We say that $X$ follows a Rice distribution of parameter $(\mu, \sigma^2)$ and write $X \sim \mathrm{Ri}(\mu,\sigma^2)$ if $X$ has the following density with respect to the Lebesgue measure:
\[ f_{\mathrm{Ri}(\mu,\sigma^2)}(x) = \frac{x}{\sigma^2} e^{-\frac{x^2}{2 \sigma^2}} e^{-\frac{\mu^2}{2 \sigma^2}} I_0 \Big(\frac{\mu x}{\sigma^2} \Big), \qquad x \in \RR_+. \]
\end{defi}

\begin{prop}[\cite{Kobayashi_2009}]
\label{prop:Rice}
Let $\mu \in \RR$, $\sigma^2 > 0$ and $\vartheta \in [0, 2\pi)$. Let $X_1 \sim \mathcal{N}(\mu \cos(\vartheta),\sigma^2)$, $X_2 \sim \mathcal{N}(\mu \sin(\vartheta),\sigma^2)$ be two independent random variables. Let $Y \sim \mathrm{Ri}(\mu,\sigma^2)$. Then
\[ \sqrt{X_1^2 + X_2^2} \overset{\mathcal{L}}{=} Y. \]
\end{prop}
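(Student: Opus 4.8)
The plan is to compute directly the law of $R := \sqrt{X_1^2 + X_2^2}$ and to check that its density coincides with $f_{\mathrm{Ri}(\mu,\sigma^2)}$ from Definition \ref{defi:Rice}. Since $X_1$ and $X_2$ are independent, their joint density is
\[ f_{(X_1,X_2)}(x_1,x_2) = \frac{1}{2 \pi \sigma^2} \exp\Big( - \frac{(x_1 - \mu \cos \vartheta)^2 + (x_2 - \mu \sin \vartheta)^2}{2 \sigma^2} \Big). \]
First I would expand the exponent and assemble the cross terms into an inner product, namely $(x_1 - \mu \cos \vartheta)^2 + (x_2 - \mu \sin \vartheta)^2 = x_1^2 + x_2^2 - 2 \mu (x_1 \cos \vartheta + x_2 \sin \vartheta) + \mu^2$.

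Next I would pass to polar coordinates $(x_1, x_2) = (r \cos \phi, r \sin \phi)$, whose Jacobian is $r$, so that by the cosine addition formula the exponent becomes $r^2 - 2 \mu r \cos(\phi - \vartheta) + \mu^2$. The joint density of $(R, \Phi)$ then reads
\[ \frac{r}{2 \pi \sigma^2} \exp\Big( - \frac{r^2 + \mu^2}{2 \sigma^2} \Big) \exp\Big( \frac{\mu r \cos(\phi - \vartheta)}{\sigma^2} \Big), \qquad (r, \phi) \in \RR_+ \times [0, 2\pi). \]
The marginal density of $R$ is obtained by integrating over $\phi$. Using the $2\pi$-periodicity of the integrand in $\phi$, the phase shift by $\vartheta$ is immaterial, and
\[ \int_0^{2 \pi} \exp\Big( \frac{\mu r \cos(\phi - \vartheta)}{\sigma^2} \Big) \d \phi = \int_0^{2 \pi} \exp\Big( \frac{\mu r \cos \psi}{\sigma^2} \Big) \d \psi = 2 \pi \, I_0 \Big( \frac{\mu r}{\sigma^2} \Big), \]
where the last equality follows from the evenness of $\cos$ together with the definition \eqref{eq:defI0} of $I_0$.

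Substituting this back, the density of $R$ becomes exactly
\[ f_R(r) = \frac{r}{\sigma^2} e^{-\frac{r^2}{2 \sigma^2}} e^{-\frac{\mu^2}{2 \sigma^2}} I_0 \Big( \frac{\mu r}{\sigma^2} \Big), \qquad r \in \RR_+, \]
which is precisely $f_{\mathrm{Ri}(\mu,\sigma^2)}$ of Definition \ref{defi:Rice}. Hence $R \overset{\mathcal{L}}{=} Y$, as claimed.

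The argument is elementary; the only step needing a little care is the reduction of the angular integral to $I_0$, where one must reconcile the half-period normalization $\frac{1}{\pi}\int_0^\pi$ used in \eqref{eq:defI0} with the full integral $\int_0^{2\pi}$, the discrepancy being absorbed by the symmetry $\cos(2\pi - \psi) = \cos \psi$. I would also stress that the resulting density is independent of $\vartheta$, which is consistent with the rotational invariance of the Euclidean norm $\sqrt{X_1^2 + X_2^2}$ and explains why the Rice law carries no angular parameter.
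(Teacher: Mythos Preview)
Your proof is correct and is the standard derivation of the Rice density via polar coordinates. The paper does not give its own proof of this proposition: it is quoted as a known fact from \cite{Kobayashi_2009}, so there is nothing to compare against beyond noting that your argument supplies the details the paper omits.
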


Let us conclude this section by a proof that the Cercignani-Lampis boundary condition does not fall into the framework of \cite{Lods_2020}:

\begin{prop}
\label{prop:not_regular}
We have, for all $x \in \pO$,
\begin{align*}
\lim \limits_{m \to \infty} \sup_{v' \in \Sigma_+^x} \int_{\{v \in \Sigma_-^x, |v| \ge m\}} R(v' \to v; x) |v \cdot n_x| \d v \ge \Big(\frac12 \Big)^d >  0.
\end{align*}
In particular, \cite[Equation (3.4)]{Lods_2020} is not satisfied, and the boundary operator $K$ is not a regular diffuse operator in the sense of \cite{Lods_2020}. 
\end{prop}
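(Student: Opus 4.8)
The plan is to read off the law of the re-emitted velocity from the kernel $R$ and to show that its mass escapes to infinity as the incoming velocity grows, which is exactly the failure of the uniform-integrability (``regular'') condition. Fix $x \in \pO$ and abbreviate $\theta = \theta(x)$, $n = n_x$. By the normalization property \eqref{eq:normalization_basic}, for each fixed $u = v' \in \Sigma_+^x$ the map $v \mapsto R(u \to v; x)\,|v \cdot n|$ is a probability density on $\Sigma_-^x$; let $V$ be a random variable with this law, so that the integral to be estimated is precisely $\PP(|V| \ge m)$. First I would decompose $v$ into its normal magnitude $r = |v \cdot n|$ and its tangential part $\vt$, so that $|v \cdot n|\,\d v = r\,\d r\,\d \vt$, and match the two factors of \eqref{eq:def_R} against the densities of Definition \ref{defi:Rice}. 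A direct computation, using $|\vp| = r$, $|\up| = u \cdot n$, $\up \cdot \vp = -r(u\cdot n)$ and the evenness of $I_0$, shows that under this law the normal magnitude $|V \cdot n|$ follows a Rice distribution $\mathrm{Ri}\big((1-\rp)^{\frac12}(u \cdot n),\, \theta \rp\big)$ and, independently, the tangential part $V_{\parallel}$ is Gaussian $\mathcal{N}\big((1-\rt)\ut,\, \theta \rt(2-\rt) I_{d-1}\big)$.

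The decisive point is that both location parameters are nonzero multiples of functions of the incoming velocity and can thus be driven to infinity. I would then use a median argument to retain a fixed fraction of the mass in the tail. For the normal part, Proposition \ref{prop:Rice} (with $\vartheta = 0$) represents $|V \cdot n| \overset{\mathcal{L}}{=} \sqrt{X_1^2 + X_2^2}$ with $X_1 \sim \mathcal{N}\big((1-\rp)^{\frac12}(u\cdot n), \theta \rp\big)$, whence $|V\cdot n| \ge X_1$ and $\PP\big(|V \cdot n| \ge (1-\rp)^{\frac12}(u \cdot n)\big) \ge \PP(X_1 \ge \EE X_1) = \tfrac12$, since the mean of a Gaussian is its median. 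Likewise each tangential coordinate $(V_{\parallel})_i$ is Gaussian with median $\big((1-\rt)\ut\big)_i$. Given $m$, I would choose $v'$ so that all $d$ of these location parameters are $\ge m$: take $u \cdot n = m/(1-\rp)^{\frac12}$ for the normal direction (possible since $\rp \in (0,1)$), and, when $\rt \ne 1$, choose $\ut$ with $(1-\rt)\ut$ having all coordinates equal to $m$. By independence of the normal magnitude and the tangential coordinates (the kernel \eqref{eq:def_R} factorizes accordingly), the event that all $d$ quantities exceed $m$ has probability at least $(\tfrac12)^d$, and on this event $|V| \ge |V \cdot n| \ge m$. Hence $\sup_{v'} \PP(|V| \ge m) \ge (\tfrac12)^d$ for every $m$, and letting $m \to \infty$ gives the claim.

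The only delicate point is the tangential shift when $\rt = 1$, where $(1-\rt)\ut = 0$ and the tangential mass cannot be pushed out; this case is harmless, since $\rp \in (0,1)$ forces $(1-\rp)^{\frac12} > 0$, so the normal direction alone already yields $\PP(|V| \ge m) \ge \PP(|V \cdot n| \ge m) \ge \tfrac12 \ge (\tfrac12)^d$ as soon as $(1-\rp)^{\frac12}(u\cdot n) \ge m$, uniformly in $\rp, \rt$ and in $x \in \pO$. Finally, the displayed limit being bounded below by $(\tfrac12)^d > 0$ contradicts the uniform-integrability requirement of \cite[Equation (3.4)]{Lods_2020}, which demands that this supremum vanish as $m \to \infty$, so that $K$ is not a regular diffuse operator in their sense. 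I expect the main obstacle to be bookkeeping — the careful identification of the normal factor of $R$ with the Rice density and the verification of the conditional independence — rather than any genuine analytic difficulty, since the escape of mass to infinity is built directly into the location parameters of the re-emitted law.
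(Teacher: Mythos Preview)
Your argument is correct and follows essentially the same route as the paper: identify the re-emitted law as a product of a Rice distribution for the normal magnitude and a Gaussian for the tangential part, then push the location parameters to infinity and use median-type bounds to retain a fixed fraction of the mass. Your treatment of the Rice tail via Proposition~\ref{prop:Rice} and the inequality $\sqrt{X_1^2+X_2^2}\ge X_1$ is in fact cleaner than the paper's dominated-convergence step, and your explicit handling of $\rt=1$ (where the tangential shift vanishes) covers a case the paper's choice $\ut_i=\tfrac{\sqrt{2}m}{2(1-\rt)}$ leaves implicit; note also that your observation in that case---that the normal component alone already gives $\PP(|V|\ge m)\ge \tfrac12$---actually works for all $\rt$, so the detour through the $d-1$ tangential events is unnecessary (though harmless for the stated bound $(\tfrac12)^d$).
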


\begin{proof}
We note first that
\[ \{ v \in \Sigma_-^x, |v| \ge m\} \supset \Big\{ v \in \Sigma_-^x, |\vp| \ge \frac{\sqrt{2}m}{2}, |\vt|  \ge \frac{\sqrt{2}m}{2} \Big\}, \]
so that we have, for all $m > 0$, $u \in \Sigma_+^x$, 
\begin{align*}
\int_{\{v \in \Sigma_-^x, |v| \ge m\}} R(u \to v; x) |v \cdot n_x| \d v \ge \int_{\{ v \in \Sigma_-^x, |\vp| \ge \frac{\sqrt{2}m}{2}, |\vt|  \ge \frac{\sqrt{2}m}{2} \}} R(u \to v; x) |v \cdot n_x| \d v. 
\end{align*}
We note that, with the previous definitions
\[ |\vp| R(u \to v; x) = f_{\mathrm{Ri}((1-\rp)^{\frac12} |\up|, \theta(x) \rp)}(-\vp) f_{\mathcal{N}((1-\rt) \ut, \theta(x) \rt (2 - \rt) I_{d-1})}(\vt). \]
We assume from now on, without loss of generality, that $n_x = e_1$. We can thus write, with the change of variable sending $v_1$ to $-v_1$ and splitting the integral
\begin{align}
\label{eq:intermed_prelim_result}
\int_{\{v \in \Sigma_-^x, |v| \ge m\}}& R(u \to v; x) |v \cdot n_x| \d v \\
&\ge \Big( \int_{\frac{\sqrt{2}m}{2}}^{\infty} f_{\mathrm{Ri}((1-\rp)^{\frac12} |\up|, \theta(x) \rp)}(v_1) \d v_1 \Big) \nonumber \\
&\qquad \times  \Big( \int_{\{\vt \in \RR^{d-1}, |\vt| \ge \frac{\sqrt{2}m}{2}\}} f_{\mathcal{N}((1-\rt) \ut, \theta(x) \rt (2 - \rt) I_{d-1})}(\vt) \d \vt \Big). \nonumber
\end{align} 
where we abusively identified $\vp$ with $v_1$ and $\vt$ with $(v_2,\dots,v_d)$ since $n_x = e_1$. 
Choosing $\ut = (\frac{\sqrt{2}m}{2 (1-\rt)}, \dots,\frac{\sqrt{2}m}{2 (1-\rt)})$ in $\RR^{d-1}$, we clearly have  
\[ \int_{\{\vt \in \RR^{d-1}, |\vt| \ge \frac{\sqrt{2}m}{2}\}} f_{\mathcal{N}((1-\rt) \ut, \theta(x) \rt (2 - \rt) I_{d-1})}(\vt) \d \vt \ge \Big(\frac12 \Big)^{d-1}, \]
by properties of Gaussian random variables: this follows by splitting the integral into $d-1$ integrals over $\RR$ of the form
\[ \int_{\{|v|  \ge \frac{\sqrt{2}m}{2} \}} \frac{e^{-\frac{\big(v - \tfrac{\sqrt{2}m}{2}\big)^2}{2 \theta(x) \rt (2-\rt)}}}{\sqrt{2 \pi \theta(x) \rt (2 - \rt) }}\d v \ge \int_0^{\infty}  \frac{e^{-\frac{v^2}{2 \theta(x) \rt (2-\rt)}}}{\sqrt{2 \pi \theta(x) \rt (2 - \rt) }}\d v = \frac12,  \]
where we only kept the integral over a subset of $\RR_+$ and performed the change of variable $v' = v - \tfrac{\sqrt{2}m}{2}$. 
  As for the first integral on the right-hand side of \eqref{eq:intermed_prelim_result}, we have
\begin{align*}
\int_{\frac{\sqrt{2}m}{2}}^{\infty} &f_{\mathrm{Ri}((1-\rp)^{\frac12} |\up|, \theta(x) \rp)}(v_1) \d v_1
\\ &= 
1 - \int_0^{\frac{\sqrt{2}m}{2}} |v_1| \frac{e^{-\frac{|v_1|^2}{2 \theta(x) \rp}}}{\theta(x) \rp} e^{-\frac{(1-\rp)|\up|^2}{2 \theta(x) \rp}} \frac1{\pi} \int_0^{\pi} e^{- \cos(\phi) \frac{(1-\rp)^{\frac12} |\up| v_1}{\theta(x) \rp}} \d \phi \d v_1 \\
&= 1 - \frac1{\pi} \int_0^{\frac{\sqrt{2}m}{2}} |v_1| \frac{e^{-\frac{|v_1|^2}{2 \theta(x) \rp}}}{\theta(x) \rp} \Big( \int_0^{\pi} e^{-\frac{|\up|^2 \big((1-\rp) + 2 \cos(\phi) (1-\rp)^{\frac12} \frac{|\up|}{|\up|^2} v_1 \big) }{2 \theta(x) \rp}}  \d \phi \Big) \d v_1,
\end{align*}
and an application of the dominated convergence theorem clearly shows that the last term on the right-hand-side converges to $0$ as $|\up| \to \infty$. Hence there exists $\up$ with $|\up|$ large enough so that 
\[ \int_{\frac{\sqrt{2}m}{2}}^{\infty} f_{\mathrm{Ri}((1-\rp)^{\frac12} |\up|, \theta(x) \rp)}(v_1) \d v_1 \ge \frac12. \]
Since we can find such a couple $(\up,\ut)$ for all $m > 0$, the conclusion follows. 
\end{proof}

%
%
%

\section{Subgeometric Lyapunov condition}
\label{section:Lyapunov}
Recall the definition of the map $\sigma$ from \eqref{eq:def_sigma}. On $\bar{G}$, we define the function $q$ by
\begin{align}
\label{eq:def_q}
q(x,v) = x + \sigma(x,v) v.
\end{align}

In terms of characteristics of the free-transport equation, for $(x,v) \in \bar{G}$, $q(x,v)$ corresponds to the right limit in $\bar{\Omega}$ of the characteristic with origin $x$ directed by $v$. The real number $\sigma(x,v)$ corresponds to the time at which this characteristic reaches the boundary, if it started from $x$ at time $0$ with velocity $v$ with $x \in \Omega$ or $x \in \pO, v \cdot n_x < 0$. If $x \in \pO$ and $v$ is not pointing towards the gas region (that is, $(x,v)$ is already the right limit of the corresponding characteristic), $q(x,v)$ simply denotes $x$. 

We recall from Esposito, Guo, Kim and Marra \cite[Lemma 2.3]{esposito_non-isothermal_2013}, that
\[v \cdot \nabla_x \sigma(x,v) = - 1, \]
for all $(x,v) \in G$.
This minus sign can be understood in the following way: since $\sigma(x,v)$ is the time needed for a particle in position $x \in \bar{\Omega}$ with velocity $v \in \RR^d$ to hit the boundary starting from the time $t = 0$, moving the particle from $x$ along the direction $v$ reduces this time. 

Recall the definition of the bracket $\langle x,v \rangle$ for $(x,v) \in \bar G$ from \eqref{eq:def_bracket} and that for all $k > 0$, $m_k(x,v) = \langle x,v \rangle^k$. This section is devoted to the proof of the following proposition.

\begin{prop}
\label{prop:ineq_lyapunov}
For any $\alpha \in (1,d+1)$, there exists $b > 0$ explicit, depending on $\alpha$, such that for all $T > 0$, $f \in L^1_{m_\alpha}(G)$, 
\begin{align}
\label{eq:main_ineq_lyapunov}
\| S_T f\|_{m_\alpha} + \alpha \int_0^T \|S_s f\|_{m_{\alpha - 1}} \d s \le \|f\|_{m_\alpha} + b(1 + T) \|f\|_{L^1}.
\end{align}
\end{prop}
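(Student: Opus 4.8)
The plan is to differentiate, in the integrated sense, the weighted norm $\|S_sf\|_{m_\alpha}$ along the free-transport flow, isolating a favourable dissipation term in the interior and a boundary term to be absorbed by the partial flux estimate. Since the weighted norm involves $|S_sf|$, I note that $|S_sf|$ is transported by the free dynamics in the interior while, by the triangle inequality and the non-negativity of $R$, its traces satisfy $\gamma_-|S_sf|\le K\gamma_+|S_sf|$. Applying Green's formula (justified through the trace theory used in Section~\ref{section:setting}) on $(0,T)\times G$ to $|S_sf|\,m_\alpha$ and using the crucial identity $v\cdot\nabla_x\sigma(x,v)=-1$, which gives
\[
v\cdot\nabla_x m_\alpha(x,v)=\alpha\,\la x,v\ra^{\alpha-1}\big(v\cdot\nabla_x\sigma(x,v)\big)=-\alpha\,m_{\alpha-1}(x,v),
\]
I would obtain the exact identity
\[
\|S_Tf\|_{m_\alpha}+\alpha\int_0^T\|S_sf\|_{m_{\alpha-1}}\,\d s
=\|f\|_{m_\alpha}-\int_0^T\!\!\int_\Sigma \gamma|S_sf|\,m_\alpha\,(v\cdot n_x)\,\d v\,\d\zeta(x)\,\d s.
\]
The interior term is exactly the dissipation appearing in \eqref{eq:main_ineq_lyapunov}, so everything reduces to the boundary contribution. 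Splitting $\Sigma=\Sigma_+\cup\Sigma_-$ and inserting $\gamma_-|S_sf|\le K\gamma_+|S_sf|$ turns it into $\int_0^T B(s)\,\d s$ with
\[
B(s)=\int_{\Sigma_-}(K\gamma_+|S_sf|)\,m_\alpha\,|v\cdot n_x|\,\d v\,\d\zeta(x)-\int_{\Sigma_+}\gamma_+|S_sf|\,m_\alpha\,(v\cdot n_x)\,\d v\,\d\zeta(x),
\]
and the whole proof then consists in showing $\int_0^T B(s)\,\d s\le b(1+T)\|f\|_{L^1}$.

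Next I would use the explicit form \eqref{eq:def_K} of $K$, Fubini's theorem and the normalization \eqref{eq:normalization_basic} to rewrite
\[
B(s)=\int_{\pO}\int_{\Sigma_+^x}\gamma_+|S_sf|(x,u)\,|u\cdot n_x|\,\big(\Phi_\alpha(x,u)-m_\alpha(x,u)\big)\,\d u\,\d\zeta(x),
\]
where, since $\sigma\equiv 0$ on $\Sigma_+$, one has $m_\alpha(x,u)=(1+\sqrt{|u|})^\alpha$, and where
\[
\Phi_\alpha(x,u):=\int_{\Sigma_-^x}R(u\to v;x)\,m_\alpha(x,v)\,|v\cdot n_x|\,\d v=\EE\big[\la x,V\ra^\alpha\big]
\]
is the average of $m_\alpha$ against the re-emission law. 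By Proposition~\ref{prop:Rice} and the factorization of $R$, the re-emitted velocity $V$ has normal part with $|V\cdot n_x|\sim\mathrm{Ri}((1-\rp)^{1/2}|\up|,\theta(x)\rp)$ and tangential part $V_\parallel\sim\mathcal{N}((1-\rt)\ut,\theta(x)\rt(2-\rt)I_{d-1})$, so that
\[
\EE\big[|V|^2\big]=(1-\rp)|\up|^2+(1-\rt)^2|\ut|^2+2\theta(x)\rp+(d-1)\theta(x)\rt(2-\rt)\le m\,|u|^2+C_0,
\]
with $m:=\max\big(1-\rp,(1-\rt)^2\big)<1$ and $C_0$ depending only on $\theta_1,\rp,\rt,d$. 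This strict contraction of the velocity is the mechanism replacing the full flux control, which is unavailable here.

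I would then establish a dichotomy for $\Phi_\alpha(x,u)-m_\alpha(x,u)$, uniform in $x\in\pO$. For large $|u|$, I bound $\la x,V\ra^\alpha$ by expanding $(1+\sigma(x,V)+\sqrt{|V|})^\alpha$; its leading part is $\EE[|V|^{\alpha/2}]$, and since $\alpha/4\le 1$ Jensen's inequality gives $\EE[|V|^{\alpha/2}]\le(\EE[|V|^2])^{\alpha/4}\le(m|u|^2+C_0)^{\alpha/4}\sim m^{\alpha/4}|u|^{\alpha/2}$, leaving a gap of order $(1-m^{\alpha/4})|u|^{\alpha/2}$ against $m_\alpha(x,u)\sim|u|^{\alpha/2}$. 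The additive constant and the $\sigma$-contributions are of lower order in $|u|$ (see below), so there exists $\Lambda$ large with $\Phi_\alpha(x,u)\le m_\alpha(x,u)$ whenever $|u|>\Lambda$, making that part of $B(s)$ non-positive. For $|u|\le\Lambda$, I bound $\Phi_\alpha(x,u)$ by a constant $C_\Lambda$: using $\sigma(x,V)\le d(\Omega)/|V|$ it suffices to control $\EE[|V|^{-\alpha}]$, and here the factor $|v\cdot n_x|$ in the re-emission measure, which makes the Rice density vanish linearly at the origin, improves the near-zero integrability of $|v|^{-\alpha}$ on $\RR^d$ precisely to the threshold $\alpha<d+1$. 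This is exactly where the hypothesis $\alpha\in(1,d+1)$ enters, and it yields $\EE[\sigma(x,V)^\alpha]<\infty$ uniformly in $(x,u)$.

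Combining the two regimes, the integrand is non-positive for $|u|>\Lambda$ and at most $C_\Lambda$ for $|u|\le\Lambda$, so dropping the favourable part gives
\[
\int_0^T B(s)\,\d s\le C_\Lambda\int_0^T\int_{\pO}\int_{\{u\in\Sigma_+^x:\,|u|\le\Lambda\}}\gamma_+|S_sf|(x,u)\,|u\cdot n_x|\,\d u\,\d\zeta(x)\,\d s,
\]
and the partial flux estimate of Lemma~\ref{lemma:control_flux} bounds the right-hand side by $C_\Lambda C_\Lambda'(1+T)\|f\|_{L^1}$, yielding \eqref{eq:main_ineq_lyapunov} with $b:=C_\Lambda C_\Lambda'$. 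I expect the main obstacle to be the boundary analysis of $\Phi_\alpha$: one must simultaneously extract the genuine smallness $(1-m^{\alpha/4})|u|^{\alpha/2}$ at large velocities and keep every lower-order correction — in particular the exit-time moment $\EE[\sigma(x,V)^\alpha]$ — bounded, and it is the delicate interplay between the $|v\cdot n_x|$ weight in the kernel and the singularity of $\sigma$ at small velocities that pins the admissible range of weights to $\alpha<d+1$.
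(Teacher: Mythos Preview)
Your proposal is correct and follows the paper's route exactly: Green's formula yields the dissipation $-\alpha m_{\alpha-1}$ and a boundary term, the boundary integrand is rewritten as $\gamma_+|S_sf|\,|u\cdot n_x|\,(\Phi_\alpha(x,u)-m_\alpha(x,u))$, one shows this is nonpositive for $|u|>\Lambda$ and bounded for $|u|\le\Lambda$, and Lemma~\ref{lemma:control_flux} closes the estimate.

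The one place that needs care is your large-$|u|$ step. Saying you ``expand'' $(1+\sigma(x,V)+\sqrt{|V|})^\alpha$ and isolate the leading part $\EE[|V|^{\alpha/2}]$ is dangerous: any convexity splitting of the type $(a+b+c)^\alpha\le 3^{\alpha-1}(a^\alpha+b^\alpha+c^\alpha)$ puts a factor $3^{\alpha-1}$ in front of $m^{\alpha/4}$ and can destroy the gap $1-m^{\alpha/4}$. The paper avoids this by first absorbing the $\sigma$-contribution into an \emph{additive} constant $C_\alpha$ --- using the crude uniform bound $R(u\to v;x)\le C$ on $\{|v|\le1\}$ together with $\alpha<d+1$, which is precisely the mechanism you describe for the small-$|u|$ regime --- so that one is left comparing the whole quantity $\EE[(1+d(\Omega)+\sqrt{|V|})^\alpha]$ to $(1+\sqrt{|u|})^\alpha$; this is then handled by factoring out $|u|^{\alpha/2}$ and applying dominated convergence (Lemma~\ref{lemma:Iux}). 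Your Jensen idea is a legitimate alternative to that DCT step, provided you apply it to the full map $t\mapsto(1+d(\Omega)+t^{1/4})^\alpha$, which is concave for $\alpha<4$, rather than only to $t\mapsto t^{\alpha/4}$.
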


To derive this result, we first need to obtain some control of the flux. This is the main source of additional difficulty compared to the pure diffuse case of \cite{Bernou_Transport_Semigroup_2020}. We tackle this issue in Lemmas \ref{lemma:control_flux} and \ref{lemma:Iux}.

\begin{lemma}[Control of the flux]
\label{lemma:control_flux}
For all $\Lambda > 0$, there exists an explicit constant $C_{\Lambda} > 0$ such that for all $f \in L^1(G)$, $T > 0$,
\begin{align*}
\int_0^T \int_{\pO} \int_{\{v \cdot n_x > 0, |v| \le \Lambda\}} |\vp| \,\gamma_+ |S_s f|(x,v) \, \d v \d \zeta(x) \d s \le C_{\Lambda} (1 + T) \|f\|_{L^1}. 
\end{align*}
\end{lemma}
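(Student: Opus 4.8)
The plan is to reduce to a nonnegative datum, turn the statement into an exact flux balance via a well-chosen weight in Green's formula, absorb the ``easy'' part of the emerging flux, and finally control the remaining part through the velocity contraction of the Cercignani--Lampis kernel and a renewal argument. First I would reduce to $f\ge 0$: since $S_s$ is positive (Theorem \ref{thm:mass_&_L1_contraction}) one has $|S_sf|\le S_s|f|$, and $\gamma_+$ is monotone, so $\gamma_+|S_sf|\le\gamma_+S_s|f|$; as $\||f|\|_{L^1}=\|f\|_{L^1}$ it suffices to treat $f\ge0$, and I write $u(s,\cdot)=S_sf\ge0$. Denoting by
\[ \Phi^{\mathrm{in}}_\Lambda(s) := \int_{\pO}\int_{\{v\cdot n_x>0,\,|v|\le\Lambda\}} |\vp|\,\gamma_+u(s,x,v)\,\d v\,\d\zeta(x) \]
the instantaneous incident slow flux (the integrand of the left-hand side) and by $\Phi^{\mathrm{em}}_\Lambda(s)$ the analogous flux over $\{v\cdot n_x<0,\,|v|\le\Lambda\}$, I would apply Green's formula to $u$ against the transport-invariant weight $\mathbf 1_{\{|v|\le\Lambda\}}$ (invariant because $|v|$ is conserved between collisions, so $v\cdot\nabla_x\mathbf 1_{\{|v|\le\Lambda\}}=0$). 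The boundary term splits over $\Sigma_\pm$ and gives the exact identity
\[ \int_0^T\Phi^{\mathrm{in}}_\Lambda(s)\,\d s = \int_0^T\Phi^{\mathrm{em}}_\Lambda(s)\,\d s + \int_G\mathbf 1_{\{|v|\le\Lambda\}}f - \int_G\mathbf 1_{\{|v|\le\Lambda\}}u(T) \le \int_0^T\Phi^{\mathrm{em}}_\Lambda(s)\,\d s + \|f\|_{L^1}, \]
using $u(T)\ge0$.

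Next I would rewrite the emerging flux through the boundary operator. Using $\gamma_-u=K\gamma_+u$ and Fubini (as in Lemma \ref{lemma:K}),
\[ \Phi^{\mathrm{em}}_\Lambda(s) = \int_{\pO}\int_{\Sigma_+^x}\gamma_+u(s,x,u)\,|u\cdot n_x|\,P_\Lambda(u;x)\,\d u\,\d\zeta(x), \qquad P_\Lambda(u;x):=\int_{\Sigma_-^x\cap\{|v|\le\Lambda\}}R(u\to v;x)\,|v\cdot n_x|\,\d v, \]
where $P_\Lambda(u;x)\in[0,1]$ is the probability that an incident velocity $u$ is re-emitted with speed $\le\Lambda$. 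I would split according to whether the incident velocity is slow ($|u|\le\Lambda$) or fast ($|u|>\Lambda$). For slow incident velocities I claim $P_\Lambda(u;x)\le\rho<1$ uniformly: by Proposition \ref{prop:Rice} the re-emitted velocity has normal component of Rice type and tangential component Gaussian, with variances bounded below by $\theta_0\rp$ and $\theta_0\rt(2-\rt)$ and means bounded by $|u|\le\Lambda$, so the re-emitted speed exceeds $\Lambda$ with probability bounded below uniformly in $x\in\pO$ (compactness, continuity of $\theta$) and in $|u|\le\Lambda$; this uses crucially $\rp\in(0,1)$, $\rt\in(0,2)$. Hence the slow-incident part of $\Phi^{\mathrm{em}}_\Lambda$ is $\le\rho\,\Phi^{\mathrm{in}}_\Lambda(s)$, which I absorb into the left-hand side to get
\[ (1-\rho)\int_0^T\Phi^{\mathrm{in}}_\Lambda(s)\,\d s \le \int_0^T B(s)\,\d s + \|f\|_{L^1}, \qquad B(s):=\int_{\pO}\int_{\Sigma_+^x\cap\{|u|>\Lambda\}}\gamma_+u\,|u\cdot n_x|\,P_\Lambda(u;x)\,\d u\,\d\zeta(x). \]

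It remains to bound $\int_0^TB$. Here I would use the explicit Rice/Gaussian tails to get $P_\Lambda(u;x)\le Ce^{-c|u|^2}$ for $|u|>\Lambda$, so that $B(s)\le C\,\Psi(s)$ with $\Psi(s)=\int_{\pO}\int_{\Sigma_+^x}|u\cdot n_x|e^{-c|u|^2}\gamma_+u\,\d u\,\d\zeta$ the incident flux weighted by a Gaussian, and the target becomes $\int_0^T\Psi\le C(1+T)\|f\|_{L^1}$. The mechanism that makes this Gaussian-weighted flux grow only linearly in $T$ is the velocity contraction of $K$: from the moments of the kernel (the Rice second moment $\nu^2+2\sigma^2$ via Proposition \ref{prop:Rice}, plus the Gaussian one) one computes
\[ \int_{\Sigma_-^x}|v|^2R(u\to v;x)\,|v\cdot n_x|\,\d v = (1-\rp)|\up|^2+(1-\rt)^2|\ut|^2+2\theta(x)\rp+(d-1)\theta(x)\rt(2-\rt) \le \lambda|u|^2 + C_\theta, \]
with $\lambda=\max(1-\rp,(1-\rt)^2)<1$, i.e. a geometric drift in $|v|^2$ at each collision, which is exactly the source of the constant $\tfrac{M}{1-m}$ advertised in the introduction.

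The hard part will be this last step, converting the drift into $\int_0^T\Psi\le C(1+T)\|f\|_{L^1}$. The natural route is to read $\int_0^T\Psi$ as (a multiple of) the expected $e^{-c|\cdot|^2}$-weighted number of boundary collisions up to time $T$ along the free-transport characteristics; since the successive flight times partition $[0,T]$, one has $\sum_k e^{-c|w_k|^2}(t_{k+1}-t_k)\le T$ for collision times $t_k$ and incident velocities $w_k$, so it suffices to bound below, uniformly in the pre-collision state, the conditionally expected flight time preceding a (weighted) collision. This lower bound fails pointwise on near-tangent (grazing) re-emissions, for which $\sigma(x,v)$ is small; the point is that such re-emissions carry little mass, because the normal component of $R$ is a Rice law whose density vanishes at zero normal velocity (Definition \ref{defi:Rice}), while the $C^2$ regularity of $\pO$ furnishes a local lower bound of the form $\sigma(x,v)\gtrsim|v\cdot n_x|/|v|^2$ near the boundary. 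Combining these with the drift above to discard large-velocity contributions should yield the required uniform lower bound on the expected flight time; this is the delicate kernel/geometry estimate I expect to isolate in Lemma \ref{lemma:Iux}. Feeding it back through the absorption inequality then produces the explicit constant $C_\Lambda$ and the factor $(1+T)$.
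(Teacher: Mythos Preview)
Your strategy differs substantially from the paper's, and it contains a genuine gap at the ``fast-to-slow'' step.

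The paper's argument is short and avoids any renewal reasoning. It multiplies the transport equation for $|S_tf|$ by the weight $(v\cdot n_x)$ (recall $x\mapsto n_x$ is extended to $\bar\Omega$ as a $W^{1,\infty}$ map) and integrates over $[0,T]\times\Omega\times\{|v|\le 1\}$. After integration by parts, the boundary contribution is $\int_0^T\int_{\Sigma\cap\{|v|\le 1\}}|v\cdot n_x|^2\gamma|f|$ and the bulk term is bounded by $T\|n_\cdot\|_{W^{1,\infty}}\|f\|_{L^1}$. Restricting to $\Sigma_-\cap\{|v|\le 1\}$, inserting the boundary condition and Fubini, one gets
\[
\int_0^T\int_{\pO}\int_{\{u\in\Sigma_+^x,\ |u|\le\Lambda\}}|\up|\,\gamma_+|f|\ J_{u,x}\ \d u\,\d\zeta\,\d t\le C(1+T)\|f\|_{L^1},
\qquad
J_{u,x}:=\int_{\{v\in\Sigma_-^x,\ |v|\le 1\}}|\vp|^2 R(u\to v;x)\,\d v.
\]
Since $(u,x)\mapsto J_{u,x}$ is continuous and strictly positive on the compact set $\{(x,u)\in\Sigma_+:\,|u|\le\Lambda\}$, it is bounded below by some $c_\Lambda>0$, and one divides. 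No absorption, no iteration, no flight-time estimate.

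Your route tries to close via a renewal inequality. The absorption step (slow-to-slow, $P_\Lambda(u;x)\le\rho<1$ for $|u|\le\Lambda$) is correct, by non-degeneracy of the Gaussian/Rice variances and compactness. The gap is the claimed bound $P_\Lambda(u;x)\le Ce^{-c|u|^2}$ for $|u|>\Lambda$. This is false in general: take $\rt=1$ (allowed, since $\rt\in(0,2)$), so that $\vt\sim\mathcal N(0,\theta(x)I_{d-1})$ is independent of $\ut$. For $u$ nearly tangent ($|\up|$ bounded, $|\ut|\to\infty$) the re-emitted law has normal mean $(1-\rp)^{1/2}|\up|=O(1)$ and tangential mean $0$, hence $P_\Lambda(u;x)$ stays bounded \emph{below} by a fixed positive constant as $|u|\to\infty$ and does not decay at all. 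Even for $\rt\ne 1$ the natural decay is in $(1-\rp)|\up|^2+(1-\rt)^2|\ut|^2$, not in $|u|^2$, so near-tangent fast velocities are not suppressed. This breaks your reduction to the Gaussian-weighted flux $\Psi$, and the remaining flight-time discussion (which is already only a sketch) no longer applies.

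Finally, you mis-identify Lemma~\ref{lemma:Iux}: in the paper it is the Lyapunov-type estimate $I_{u,x}\le -C$ for large $|u|$, used downstream in the proof of Proposition~\ref{prop:ineq_lyapunov}, and has nothing to do with flight times or grazing collisions. The velocity-moment drift you compute, $\int|v|^2R(u\to v;x)|\vp|\,\d v\le\lambda|u|^2+C_\theta$ with $\lambda=\max(1-\rp,(1-\rt)^2)<1$, is indeed the mechanism behind that lemma, but it plays no role in the proof of the flux control itself.
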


\begin{proof}
We have, by definition of $(S_t)_{t \ge 0}$, that
\[ \partial_t |S_t f| + v \cdot \nabla_x |S_t f| = 0, \qquad \text{a.e. in } [0,T] \times G. \]
Recall that $x \to n_x$ is a $W ^{1,\infty}(\Omega)$ map by hypothesis. Multiplying this equation by $(v \cdot n_x)$ and integrating on $[0,T] \times \Omega \times \{v \in \RR^d, |v| \le 1\}$, we find
\begin{align*}
0 = \int_0^T \int_{\Omega} \int_{\{|v| \le 1\}} (v \cdot n_x) \, \Big( \partial_t + v \cdot \nabla_x \Big) |f|(t,x,v) \, \d v \d x \d t.
\end{align*}
Integrating by parts in both time and space on the right-hand side, we find
\begin{align*}
0 = &\Big[ \int_{\Omega} \int_{\{|v| \le 1\}} (v \cdot n_x) \, |f|(t,x,v)  \, \d v \d x \Big]_0^T \\
&- \int_0^T \int_\Omega \int_{\{|v| \le 1\}} |f| (t,x,v) \, v \cdot \nabla_x (v \cdot n_x) \, \d v \d x \d t  \\
&+ \int_0^T \int_{\pO} \int_{\{|v| \le 1\}} |v \cdot n_x|^2 \, \gamma |f|(t,x,v) \,  \d v \d \zeta(x) \d t, 
\end{align*}
where we used that 
\begin{align}
\label{eq:absolute_trace}
|\gamma S_t f(x,v)| = \gamma |S_t f|(x,v) \qquad \text{ a.e. in } ((0,\infty) \times \Sigma_+) \cup ((0,\infty) \times \Sigma_-),
\end{align}
see Mischler \cite[Corollary 1]{Mischler_Vlasov_1999}.
Since $x \to n_x$ belongs to $W^{1,\infty}(\Omega)$ and using the triangle inequality, this leads to 
\begin{align*}
\int_0^T \int_{\{ (x,v) \in \Sigma_-, |v| \le 1\}} \hspace{-.6cm} |\vp|^2 \,  \gamma_- |f|(t,x,v) \d v \d \zeta(x) \d t \leq 2 \|f\|_{L^1} + \|n_{\cdot}\|_{W^{1,\infty}} \int_0^T \|S_s f\|_{L^1} \, \d s. 
\end{align*}
Using the boundary condition and that  for all $s \ge 0$, $\|S_s f\|_{L^1} \le \|f\|_{L^1}$, we find 
\begin{align}
\label{eq:before_J}
\int_0^T  \int_{\{(x,u) \in \Sigma_+, |u| \le \Lambda\}} |\up| \, \gamma_+ |f|(t,x,u) \int_{\{v \in \Sigma_-^x, |v| \le 1\}} |\vp|^2 \,& R(u\to v; x) \, \d v \d u \d \zeta(x) \d t \nonumber \\
&\leq C (1 + T) \|f\|_{L^1},
\end{align}
for some $C > 0$ independent of $T$, where we used that $\{ (x,u) \in \Sigma_+, |u| \le \Lambda \} \subset \Sigma_+$. 
We claim that there exists $c_{\Lambda} > 0$ such that for all $(x,u) \in \Sigma_+$ with $|u| \le \Lambda$, 
\[ J_{u,x} := \int_{\{v \in \Sigma_-^x, |v| \le 1\}} |\vp|^2 \, R(u\to v;x) \d v \ge c_{\Lambda}. \]
Indeed, 
\begin{align*}
J_{u,x} =  \int_{\{v \in \Sigma_-^x, |v| \le 1\}} &\frac{|\vp|^2}{\theta(x) \rp (2 \pi \theta(x) \rt (2 - \rt))^{\frac{d-1}{2}}}  e^{-\frac{|\vp|^2}{2 \theta(x) \rp}} e^{-\frac{(1-\rp)|\up|^2}{2 \theta(x) \rp}} \\\
& \times I_0 \Big(\frac{ (1-\rp)^{\frac12} \up \cdot \vp}{ \theta(x) \rp} \Big) e^{-\frac{|\vt - (1-\rt) \ut|^2}{2 \theta(x) \rt (2 - \rt)}} \d v,
\end{align*} 
and, since $x \to n_x$ and $x \to \theta(x)$ are continuous, $(x,u) \to J_{u,x}$ is clearly continuous with $J_{u,x} > 0$ on the compact set $\{(x,u) \in \Sigma_+, |u| \le \Lambda\}$.
Therefore, there exists $c_{\Lambda} > 0$ such that for all $(x,u) \in \{(x,u) \in \Sigma_+, |u| \le \Lambda\}$, 
\begin{align*}
J_{u,x} \ge c_{\Lambda}.
\end{align*}
Note that, for any given $\Lambda$, the value of $c_{\Lambda}$ can be computed explicitly.
Inserting this into \eqref{eq:before_J}, we find 
\begin{align*}
c_{\Lambda} \int_0^T \int_{\{(x,v) \in \Sigma_+, |v| \le \Lambda\}} |\vp| \, \gamma_+ |f|(t,x,v) \, \d v  \d \zeta(x) \d t \le C (1 + T) \|f\|_{L^1},
\end{align*}
and the conclusion follows by setting $C_{\Lambda} = \frac{C}{c_{\lambda}} > 0$. 
\end{proof}

\begin{rmk}
The fact that we only obtained a partial control on the flux, instead of a control of the whole quantity
\begin{align*}
\int_0^T \int_{\Sigma_+} |v \cdot n_x| \gamma_+ |S_s f|(x,v) \, \d v \d \zeta(x) \d s
\end{align*}
is closely related to the lack of weak compactness of the operator $K$ obtained in Proposition \ref{prop:not_regular}. 
\end{rmk}

\begin{lemma}
\label{lemma:Iux}
Let $\alpha \in (1,d+1)$. For all $C > 0$, there exists $\Lambda > 0$ such that for all $x \in \pO$, $u \in \Sigma^x_+$ with $|u| \ge \Lambda$,
\begin{align}
\label{eq:conclusion_lemma_flux_ineq} 
I_{u,x} := \int_{ \Sigma_-^x} |\vp| \, \Big\{ (1 + d(\Omega) + \sqrt{|v|})^{\alpha} - (1 + \sqrt{|u|})^{\alpha} \Big\} \, R(u \to v; x) \, \d v \le - C. 
\end{align}
\end{lemma}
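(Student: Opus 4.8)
The plan is to read $I_{u,x}$ probabilistically. By the normalization \eqref{eq:normalization_basic}, $|\vp|\,R(u\to v;x)\,\d v$ is a probability measure on $\Sigma_-^x$, and the factorization of this density recorded in the proof of Proposition \ref{prop:not_regular} shows that under it the normal magnitude $|\vp|$ and the tangential part $\vt$ are independent, with $|\vp|\sim\mathrm{Ri}\big((1-\rp)^{\frac12}|\up|,\theta(x)\rp\big)$ and $\vt\sim\mathcal N\big((1-\rt)\ut,\theta(x)\rt(2-\rt)I_{d-1}\big)$. Denoting by $V$ the associated random velocity and setting $a := 1 + d(\Omega)$, and using that a constant integrates against a probability measure to itself, I would rewrite
\[
I_{u,x} = \EE\big[(a + \sqrt{|V|})^{\alpha}\big] - (1 + \sqrt{|u|})^{\alpha}.
\]

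The heart of the matter is a second-moment contraction. Using $|V|^2 = |\vp|^2 + |\vt|^2$ together with the mean-square of a Rice variable (via the Gaussian representation of Proposition \ref{prop:Rice}, $\EE[|\vp|^2] = (1-\rp)|\up|^2 + 2\theta(x)\rp$) and of the Gaussian ($\EE[|\vt|^2] = (1-\rt)^2|\ut|^2 + (d-1)\theta(x)\rt(2-\rt)$), I obtain, with $m := \max\big((1-\rp),(1-\rt)^2\big)$ and using $|u|^2 = |\up|^2 + |\ut|^2$ together with $\theta(x)\le\theta_1$,
\[
\EE[|V|^2] \le m\,|u|^2 + K, \qquad K := \big(2\rp + (d-1)\rt(2-\rt)\big)\theta_1.
\]
Since $\rp\in(0,1)$ gives $1-\rp\in(0,1)$ and $\rt\in(0,2)$ gives $(1-\rt)^2\in[0,1)$, we have $m\in(0,1)$. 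This is the rigorous incarnation of the physical fact, used throughout the paper, that the reflected velocity is on average smaller than the incoming one; the gain factor $m<1$ is the crucial point.

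From here everything is soft analysis. Since $\alpha < d+1 \le 4$, we have $\alpha/4 < 1$, so $t\mapsto t^{\alpha/4}$ is concave and Jensen's inequality, followed by subadditivity of the same map, gives
\[
\EE\big[(\sqrt{|V|})^{\alpha}\big] = \EE\big[(|V|^2)^{\alpha/4}\big] \le (\EE[|V|^2])^{\alpha/4} \le (m|u|^2 + K)^{\alpha/4} \le m^{\alpha/4}|u|^{\frac{\alpha}{2}} + K^{\alpha/4}.
\]
To absorb the additive constant $a$ I would use the elementary bound, valid for every $\eta>0$ with a constant $C_\eta$ depending only on $a,\alpha,\eta$ (not on $x$ or $u$), that $(a+s)^{\alpha} \le (1+\eta)s^{\alpha} + C_\eta$ for all $s\ge 0$; this holds because $(a+s)^{\alpha} - (1+\eta)s^{\alpha} \to -\infty$ as $s\to\infty$ while the left-hand side is continuous on $[0,\infty)$. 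Combining these and choosing $\eta$ small enough that $\kappa := (1+\eta)m^{\alpha/4} < 1$, I reach $\EE[(a+\sqrt{|V|})^\alpha] \le \kappa|u|^{\frac{\alpha}{2}} + C'$ with $C'$ uniform in $x$. As $(1+\sqrt{|u|})^\alpha \ge |u|^{\frac{\alpha}{2}}$, this yields
\[
I_{u,x} \le -(1-\kappa)|u|^{\frac{\alpha}{2}} + C',
\]
which tends to $-\infty$ as $|u|\to\infty$ uniformly in $x\in\pO$; choosing $\Lambda$ with $(1-\kappa)\Lambda^{\frac{\alpha}{2}} \ge C' + C$ concludes.

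The only genuinely delicate step is the second-moment estimate, and specifically preserving the gain factor $m<1$ — this is exactly where the non-singularity of both accommodation coefficients is used. The remaining inequalities are routine; the one point requiring care is that all constants stay uniform in $x$, which is guaranteed because $\rp,\rt$ are fixed and $\theta$ is bounded above by $\theta_1$.
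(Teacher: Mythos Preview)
Your proof is correct and, in fact, somewhat cleaner than the paper's. Both arguments start from the same probabilistic reading of $|\vp|R(u\to v;x)\,\d v$ as a law on $\Sigma_-^x$ with Rice normal component and Gaussian tangential component, and both isolate the same gain factor $m=\max\big((1-\rp),(1-\rt)^2\big)<1$. The difference is in how the expectation $\EE[(a+\sqrt{|V|})^{\alpha}]$ is controlled: the paper writes $|V|^2$ out via the Gaussian representation of the Rice variable (Proposition~\ref{prop:Rice}), factors $|u|^{\alpha/2}$ and uses dominated convergence to show the rescaled expectation tends to $m^{\alpha/4}$ as $|u|\to\infty$, whereas you observe that $\alpha<d+1\le 4$ makes $t\mapsto t^{\alpha/4}$ concave and apply Jensen to reduce everything to the elementary second-moment bound $\EE[|V|^2]\le m|u|^2+K$. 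Your route avoids any limit argument and keeps all constants explicit and uniform in $x$ from the outset; the paper's route is perhaps more visual but needs the domination step. Either way the crux is identical and you have identified it correctly.
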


\begin{proof}
Although the result can be derived by purely deterministic arguments with the same idea, we will use insights from probability theory for the sake of conciseness. Recall that if $k \ge 2$, $\mu \in \RR^k$, $\Sigma \in \mathcal{M}_s^k$ and $N \sim \mathcal{N}(\mu,\Sigma)$, $\bar{N} := N - \mu \sim \mathcal{N}(0, \Sigma)$. Second, we recall that we write $X \sim \mathrm{Ri}(\mu, \sigma^2)$ if $X$ follows the Rice distribution of parameters $\mu \in \RR$, $\sigma^2 > 0$, see Definition \ref{defi:Rice}, and that we denote $f_{\mathrm{Ri}(\mu,\sigma^2)}$ the corresponding density on $\RR_+$. Finally, we recall the result from Proposition \ref{prop:Rice} which links Gaussian random variables and Rice distributions.

Note that
\begin{align*}
 I_{u,x} 
 &= \int_{\Sigma_-^x} |\vp| \, \Big\{\big(1 + d(\Omega) + (|\vp|^2 + |\vt|^2)^{\frac14} \big)^{\alpha} - (1 + \sqrt{|u|})^{\alpha} \Big\} \, R(u \to v; x) \, \d v,
\end{align*}
by definition of $\vp$ and $\vt$. Since the determinant of the (orthogonal) matrix sending the canonical basis of $\RR^d$ to $(n_x, \tau_x^1, \dots, \tau_x^{d-1})$, where $(\tau_x^{1}, \dots, \tau_x^{d-1})$ is an orthonormal basis of $n_x^{\perp}$, has absolute value $1$, we may rewrite $I_{u,x}$ as
\begin{align*}
I_{u,x} &= \int_{-\infty}^0 \int_{\RR^{d-1}} \Big\{ \big( 1+d(\Omega) + (|\vp|^2 + |\vt|^2)^{\frac14} \big)^{\alpha} - (1 + \sqrt{|u|})^{\alpha} \Big\} \\
&\qquad \times f_{\mathcal{N}((1-\rt)\ut, \theta(x) \rt (2-\rt) I_{d-1})} (\vt) \, \\
&\qquad \times  |\vp| \frac{e^{-\frac{|\vp|^2}{2 \theta(x) \rp}}}{\theta(x) \rp} \, e^{-\frac{(1-\rp)|\up|^2}{2 \theta(x) \rp}} \, I_0\Big( \frac{ (1-\rp)^{\frac12} |\up| \vp}{\theta(x) \rp} \Big) \, \d \vt \d \vp,
\end{align*} 
where we (abusively) write $\vp$ for $v \cdot n_x$ to simplify notations. 
We apply the change of variable $\vp \to - \vp$, and, by parity of $I_0$ and definition of the Rice distribution, we find
\begin{align*}
I_{u,x} &= \int_0^{\infty} \int_{\RR^{d-1}} \Big\{ \big( 1+d(\Omega) + (|\vp|^2 + |\vt|^2)^{\frac14} \big)^{\alpha} - (1 + \sqrt{|u|})^{\alpha} \Big\} \\
&\qquad \qquad \times f_{\mathcal{N}((1-\rt)\ut, \theta(x) \rt (2-\rt) I_{d-1})} (\vt) \, f_{\mathrm{Ri}((1-\rp)^{\frac12} |\up|, \theta(x) \rp)}(\vp) \, \d \vp \d \vt.
\end{align*}
We now rewrite $I_{u,x}$ as an expectation:
\begin{align*}
I_{u,x} = \EE \Big[ \big(1 + d(\Omega) + (|X|^2 + |Y|^2)^{\frac14} \big)^{\alpha} \Big] - (1 + \sqrt{|u|})^{\alpha}, 
\end{align*}
with $Y \sim \mathrm{Ri}((1-\rp)^{\frac12} |\up|, \theta(x) \rp)$, $X \sim \mathcal{N}((1-\rt) \ut, \theta(x) \rt (2-\rt) I_{d-1})$. 
Using Proposition \ref{prop:Rice}, we let $\vartheta \in [0,2\pi)$, and consider two random variables independent from everything else (and mutually independent):
\[ Y_1 \sim \mathcal{N}((1-\rp)^{\frac12} |\up| \cos(\vartheta), \theta(x) \rp), \qquad Y_2 \sim \mathcal{N}((1-\rp)^{\frac12} |\up| \sin(\vartheta), \theta(x) \rp). \]
We have $Y \overset{\mathcal{L}}{=} \sqrt{Y_1^2 + Y_2^2}$, so that
\begin{align*}
I_{u,x} = \EE \Big[ \big(1 + d(\Omega) + (Y_1^2 + Y_2^2 + |X|^2)^{\frac14} \big)^{\alpha} \Big] - (1 + \sqrt{|u|})^{\alpha}. 
\end{align*}
This leads to
\begin{align*}
I_{u,x} &=  \EE \Big[ \Big( 1 + d(\Omega) + \Big\{ \bar Y_1^2  + 2 (1-\rp)^{\frac12} \bar Y_1 |\up| \cos(\vartheta)  \\
&\qquad \qquad \qquad \quad  + \bar Y_2^2 + 2 (1-\rp)^{\frac12} \bar Y_2 |\up| \sin(\vartheta) + (1-\rp) |\up|^2  \\
&\qquad \qquad \qquad \quad + |\bar X|^2 + 2 (1 - \rt) \bar X \cdot  \ut  +  (1-\rt)^2 |\ut|^2 \Big\}^{\frac14} \Big)^{\alpha} \Big] \\
&\qquad \quad  - (1 + \sqrt{|u|})^{\alpha}, 
\end{align*}
where 
\begin{align*}
\bar Y_1 &= Y_1 - (1 - \rp)^{\frac12} |\up| \cos(\vartheta) \sim \mathcal{N}(0,\theta(x) \rp), \\ \bar Y_2 &= Y_2 - (1-\rp)^{\frac12} |\up| \sin(\vartheta) \sim \mathcal{N}(0,\theta(x) \rp), \\
\bar X &= X - (1-\rt) \ut \sim \mathcal{N}(0, \theta(x) \rt (2-\rt) I_{d-1}).
\end{align*}
Therefore, using $(1-\rt)^2|\ut|^2 + (1-\rp) |\up|^2 \le \max((1-\rt)^2,(1-\rp)) |u|^2$,
\begin{align}
\label{eq:ineq_tmp_I}
I_{u,x} &\le |u|^{\frac{\alpha}{2}} \Big( \EE \Big[ \Big( \frac{1 + d(\Omega)}{\sqrt{|u|}} + \Big\{ \frac{\bar Y_1^2 + \bar Y_2^2 + |\bar X|^2}{|u|^{2}} + \max((1-\rp), (1-\rt)^2) \\
&\qquad \qquad  + 2\frac{(1-\rp)^{\frac12} |\up| \big( \bar Y_1 \cos(\vartheta) + \bar Y_2  \sin(\vartheta) \big) + (1-\rt) \bar X \cdot \ut} {|u|^2}   \Big\}^{\frac14} \Big)^{\alpha} \Big]	\nonumber \\ 
&\qquad \qquad - \Big( \frac{1}{\sqrt{|u|}} + 1 \Big)^{\alpha} \Big). \nonumber
\end{align}
One can immediately notice that the quantity inside the expectation in \eqref{eq:ineq_tmp_I} is bounded uniformly for all $|u| \ge \Lambda_0$ for some $\Lambda_0 > 0$ large enough, using properties of Gaussian random variables. This converges towards $m := \max((1-\rp), (1-\rt)^2)^{\frac{\alpha}{4}} < 1$ by hypothesis. By dominated convergence theorem, we thus have
\begin{align*}
\lim \limits_{|u| \to \infty} &\EE \Big[ \Big( \frac{1 + d(\Omega)}{\sqrt{|u|}} + \Big\{ \frac{\bar Y_1^2 + \bar Y_2^2 + |\bar X|^2}{|u|^{2}} + \max((1-\rp), (1-\rt)^2) \\
&\qquad   + 2\frac{(1-\rp)^{\frac12} |\up| \big( \bar Y_1 \cos(\vartheta) + \bar Y_2  \sin(\vartheta) \big) + (1-\rt) \bar X \cdot \ut} {|u|^2}   \Big\}^{\frac14} \Big)^{\alpha} \Big] = m. 
\end{align*}
Using this in \eqref{eq:ineq_tmp_I}, since $\frac{1}{\sqrt{|u|}} + 1 \to 1$ as $|u| \to \infty$, we obtain the existence of $\Lambda_0 > 1$ such that for all $|u| \ge \Lambda_0$, using also $\alpha < d+1$, 
\[ I_{u,x} \le |u|^{\tfrac{d+1}{2}} \Big( \frac{m-1}{2} \Big) < 0. \]
Choosing $\Lambda = \Lambda_0 + (\frac{2 C}{1 - m})^{\tfrac{2}{d+1}} > \Lambda_0$, we have, for all $u$ such that $|u| \ge \Lambda$, recalling $m-1 < 0$,
\[ I_{u,x} \le |u|^{\tfrac{d+1}{2}} \Big( \frac{m-1}{2} \Big) \le 2 \frac{C}{1-m} \Big( \frac{m-1}{2} \Big) = - C,\]
and the conclusion follows.
\end{proof}

\begin{proof}[Proof of Proposition \ref{prop:ineq_lyapunov}]
Note first that, since, for all $(x,v) \in G$, $m_\alpha(x,v) = \langle x,v \rangle^{\alpha}$,
\begin{align}
\label{eq:derivative_weight}
 v \cdot \nabla_x m_\alpha(x,v) = (v \cdot \nabla_x \sigma(x,v)) \alpha \langle x,v \rangle^{\alpha - 1} = - \alpha m_{\alpha - 1}. 
 \end{align}

\vspace{.3cm}

\textbf{Step 1.} Let $f \in L^1_{m_\alpha}(G)$. We differentiate the $m_\alpha$-norm of $f$ and use \eqref{eq:derivative_weight}. First, since $n_x$ is the unit outward normal at $x \in \pO$, for $T > 0$, we apply Green's formula to find
\begin{align*}
\frac{d}{dT} \int_G |S_T f| \, m_\alpha \, \d v \d x = \int_G |S_T f| \, (v \cdot \nabla_x m_\alpha) \, \d v \d x - \int_{\Sigma} (v \cdot n_x) \, m_\alpha \, (\gamma |S_T f|) \, \d v \d \zeta(x), 
\end{align*}
where we recall that $\d \zeta$ denotes the induced volume form on $\pO$. We have again, according to Mischler \cite[Corollary 1]{Mischler_Vlasov_1999},
\[ |\gamma S_t| f(x,v) = \gamma |S_t f|(x,v), \qquad \hbox{ a.e. in } (\RR_+^* \times \Sigma_+) \cup (\RR_+^* \times \Sigma_-), \]
hence, we will not distinguish between both values in what follows. We apply \eqref{eq:derivative_weight} to find
\begin{align}
\label{ineq:temp_Lyap}
 \frac{d}{dT} \int_G |S_T f| \, m_\alpha \, \d v \d x &= - \alpha \int_G |S_T f| \, m_{\alpha - 1} \d v \d x \\
 &\qquad - \int_{\Sigma} (v \cdot n_x) \, \gamma |S_T f| \, m_\alpha \, \d v \d \zeta(x). \nonumber
 \end{align}
In the sequel, we let 
\[ B := -\int_{\Sigma} (v\cdot n_x) \, \gamma |S_T f| \, m_\alpha \, \d v \d \zeta(x). \]

\vspace{.3cm}

\textbf{Step 2.} We prove, using Lemma \ref{lemma:Iux}, that there exists $M > 0$ constant such that 
\begin{align}
\label{eq:main_ineq_step_3_Lyapunov}
B \lesssim \int_{\{(x,v) \in \Sigma_+, |v| \le M\}} \gamma_+ |S_T f| \, |\vp| \, \d v \d \zeta(x). 
\end{align}
By definition of $B$,
\begin{align*}
 B &= - \int_{\Sigma_+} \gamma_+ |S_T f| \, |\vp| \, m_\alpha(x,v) \, \d v \d \zeta(x) + \int_{\Sigma_-} \gamma_- |S_T f| \, |\vp| \, m_\alpha(x,v) \, \d v \d \zeta(x)  \\
 &=: -B_1 + B_2,
 \end{align*}
 the last equality standing for a definition of $B_1$ and $B_2$. 
 Using the boundary condition and Tonelli's theorem, it is straightforward to see that
 \begin{align*}
 B_2 = \int_{\Sigma_+} \gamma_+ |S_T f|(u) \, |\up| \Big( \int_{\Sigma_-^x} m_\alpha(x,v) \, |\vp| \, R(u \to v; x) \, \d v \Big) \d u \d \zeta(x).
 \end{align*}
 Set, for all $x \in \pO$, $u \in \Sigma_+^x$, 
 \[ P_{u,x} := \int_{\Sigma_-^x} m_\alpha(x,v) \, |\vp| \, R(u \to v; x) \, \d v. \]
 Note first that, for all $v \in \Sigma_-^x$, $\up \cdot \vp \le 0$ so that, using the definition of $I_0$ \eqref{eq:defI0}, 
 \[ I_0 \Big( \frac{(1-\rp)^{\frac12} \up \cdot \vp}{\theta(x) \rp} \Big) \le e^{-\frac{2(1-\rp)^{\frac12} \up  \cdot \vp}{2\theta(x) \rp}}, \]
 hence, using $\theta(x) \ge \theta_0 > 0$ for all $x \in \pO$, 
 \begin{align*}
 R(u \to v;x) &= \frac{e^{-\frac{|\vt - (1-\rt) \ut|^2}{2 \theta(x) \rt (2-\rt)}}}{(2 \pi \theta(x) \rt (2 - \rt))^{\frac{d-1}{2}}}  \frac{e^{-\frac{|\vp|^2}{2 \theta(x) \rp}}}{\theta(x) \rp} e^{-\frac{(1-\rp)|\up|^2}{2 \theta(x) \rp}} I_0 \Big( \frac{(1 - \rp)^{\frac12} \up \cdot \vp}{ \theta(x) \rp} \Big) \\
 &\le \frac{1}{(2 \pi \theta(x) \rt (2 - \rt))^{\frac{d-1}{2}}} e^{-\frac{|\vt - (1-\rt) \ut|^2}{2 \theta(x) \rt (2-\rt)}} \frac{e^{-\frac{|\vp + (1-\rp)^{\frac12} \up|^2}{2 \theta(x) \rp}}}{\theta(x) \rp} \\ 
 &\le \frac{1}{\theta_0 \rp (2 \pi \theta_0 \rt (2-\rt))^{\frac{d-1}2}} =: C
 \end{align*}
with $C > 0$ constant, where we used the upper bound $1$ for both exponentials. Recall that for all $(x,v) \in \bar G$, $m_{\alpha}(x,v) = (1 + \sigma(x,v) + \sqrt{|v|})^{\alpha}$ and that $d(\Omega)$ denotes the diameter of $\Omega$.
We first have, using that $\sigma(x,v) \le \frac{d(\Omega)}{|v|}$ and that $|\vp| \le |v|$, 
\begin{align*}
\int_{\{v \in \Sigma_-^x, |v| \le 1\}} \hspace{-.8cm} m_\alpha(x,v) \, |\vp| \, R(u \to v; x) \, \d v &\le \int_{\{v \in \Sigma_-^x, |v| \le 1\}} \Big( 2 + \frac{d(\Omega)}{|v|} \Big)^{\alpha} \, |\vp| \, R(u \to v; x) \d v \\
&\le C \int_{\{v \in \Sigma_-^x, |v| \le 1\}} \Big( 2 + \frac{d(\Omega)}{|v|} \Big)^{\alpha} \, |v| \d v \\
&\le C_{\alpha}
\end{align*}
for some constant $C_{\alpha} > 0$ independent of $u$ and $x$. Note that we crucially used that $\alpha < d+1$ to obtain the existence of such finite $C_{\alpha}$ (as can be checked by using an hyperspherical change of variable). On the other hand,
\begin{align*}
\int_{\{v \in \Sigma_-^x, |v| \ge 1\}} &m_\alpha(x,v) \, |\vp| \, R(u \to v; x) \, \d v \\
&\le \int_{\{v \in \Sigma_-^x, |v| \ge 1\}} (1 + d(\Omega) + \sqrt{|v|})^{\alpha} \, |\vp| \, R(u \to v; x) \, \d v \\
&\le \int_{\Sigma_-^x} (1 + d(\Omega) + \sqrt{|v|})^{\alpha} \, |\vp| \, R(u \to v; x) \, \d v.
\end{align*}
Overall, we proved that 
\begin{align}
\label{eq:JU}
 P_{u,x} \le C_{\alpha} + \int_{\Sigma_-^x} (1 + d(\Omega) + \sqrt{|v|})^{\alpha} \, |\vp| \, R(u \to v; x) \, \d v. 
 \end{align}
Using that, for all $(x,u) \in \Sigma_+$, $\int_{\Sigma_-^x} |\vp| \, R(u\to v; x) \, \d v = 1$, and, since 
\[ m_\alpha(x,u) \ge (1+\sqrt{|u|})^{\alpha},\]
 we have
 \begin{align}
 \label{eq:B1}
- B_1 \le - \int_{\Sigma^+} |\up| \, |\gamma_+  S_T f|(x,u)\,  (1+\sqrt{|u|})^{\alpha} \int_{\Sigma_-^x} |\vp| \, R(u \to v; x) \, \d v  \, \d u \d \zeta(x). 
 \end{align}
Gathering \eqref{eq:B1}, \eqref{eq:JU} and the definition of $B$, we find
\begin{align*}
B &\le \int_{\Sigma_+} |\up| \, |\gamma_+ S_T f|(x,u) \, \\
&\quad \times \Big\{ C_{\alpha} + \int_{\Sigma_-^x} \Big[ (1 + d(\Omega) + \sqrt{|v|})^{\alpha} - (1 + \sqrt{|u|})^{\alpha} \Big] \, |\vp| \, R(u \to v; x) \, \d v \Big\} \d u \d \zeta(x) \\
&\le \int_{\Sigma_+} |\up| \, |\gamma_+ S_T f|(x,u) \, \big( C_{\alpha} + I_{u,x} \big) \d u \d \zeta(x),
\end{align*}
where $I_{u,x}$ is defined as in Lemma \ref{lemma:Iux}. Splitting $\Sigma_+^x$ as
\[ \Sigma_+^x = \{u \in \Sigma_+^x: |u| < \Lambda\} \cup \{u \in \Sigma_+^x: |u| \ge \Lambda\} \]
with $\Lambda > 0$ given by Lemma \ref{lemma:Iux} applied with $C = C_{\alpha}$, we find that 
\[ \int_{\{(x,u) \in \Sigma_+, |u| \ge \Lambda\}} |\up| \, |\gamma_+ S_T f|(x,u) \, \big( C_{\alpha} + I_{u,x} \big) \d u \d \zeta(x) \le 0, \]
leading to 
\begin{align}
\label{eq:tmp_B}
B &\le \int_{\pO} \int_{\{u \in \Sigma_+^x, |u| \le \Lambda\}} |\up| \, |\gamma_+ S_T f|(x,u) \Big(C_{\alpha} + I_{u,x} \Big) \d u \d \zeta(x)  \nonumber  \\
&\le \int_{\pO} \int_{\{u \in \Sigma_+^x, |u| \le \Lambda\}} |\up| \, |\gamma_+ S_T f|(x,u) \nonumber \\
&\qquad \qquad \times \Big( C_{\alpha} + \int_{\Sigma_-^x} (1+d(\Omega) + \sqrt{|v|})^{\alpha} \, |\vp| \, R(u\to v;x) \, \d v \Big) \d u \d \zeta(x).  
\end{align}
We claim that, for all $x \in \pO$, $u \in \Sigma_+^x$ with $|u| \le \Lambda$,
\[ \int_{\Sigma_-^x} (1 + d(\Omega) + \sqrt{|v|})^{\alpha} \, |\vp| \, R(u \to v; x) \d v \in (0,\infty). \]
This can be seen again by using probability theory. We write this integral as
\[ \EE \Big[ \Big(1 + d(\Omega) + \big(|X|^2 + |Y|^2\big)^{\frac14} \Big)^{\alpha}\Big], \]
for $Y \sim \mathrm{Ri}((1-\rp)^{\frac12}|\up|, \theta(x) \rp)$ and $X \sim \mathcal{N}((1-\rt) \ut, \theta(x) \rt (2 - \rt) I_{d-1})$ two independent random variables. Using Proposition \ref{prop:Rice}, we have, for any $\hat{\beta} \in [0,2\pi)$, 
\[ Y \overset{\mathcal{L}}{=} \sqrt{Y_1^2 + Y_2^2} \quad \text{  with } \]
\[ Y_1 \sim \mathcal{N}((1-\rp)^{\frac12} |\up| \cos(\hat{\beta}), \theta(x) \rp), \qquad Y_2 \sim \mathcal{N}((1-\rp)^{\frac12} |\up| \sin(\hat{\beta}), \theta(x) \rp) \] 
two random variables independent from everything else. By standard properties of the moments of Gaussian random variables, the claim follows.  

Using that $x \to n_x$ and $x \to \theta(x)$ are continuous, we have $(x,u) \to R(u \to v; x)$ continuous, hence, by compactness of $\{(x,u) \in \Sigma_+, |u| \le \Lambda\}$ and continuity under the integral sign, there exists $C'_{\alpha, \Lambda} >0$ such that for all $(x,u) \in \Sigma_+$ with $|u| \le \Lambda$, 
\[ \int_{\Sigma_-^x} (1 + d(\Omega) + \sqrt{|v|})^{\alpha} \, |\vp| \, R(u \to v; x) \d v \le C'_{\alpha,\Lambda}. \]
Using this in \eqref{eq:tmp_B}, we have
\begin{align*}
B \le \big(C_{\alpha} + C'_{\alpha, \Lambda} \big) \int_{\pO} \int_{\{ v \in \Sigma_+^x, |v| \le \Lambda\}} \gamma_+ |S_T f|(x,v) \, |\vp| \d v \d \zeta(x). 
\end{align*}
We plug this inequality into \eqref{ineq:temp_Lyap} and conclude that, for $C_{\alpha,\Lambda} = C_{\alpha} + C'_{\alpha,\Lambda} > 0$,
\begin{align}
\label{eq:conclusion_Step_2_lyapunov}
\frac{d}{dT} &\int_G |S_T f| \, m_\alpha(x,v) \, \d v \d x \\
&\le - \alpha \int_G |S_T f| \, m_{\alpha - 1}(x,v) \, \d v \d x + C_{\alpha,\Lambda} \int_{\{(x,v) \in \Sigma_+, |v| \le \Lambda\}} \gamma_+ |S_T f|(x,v) \, |\vp| \, \d v \d \zeta(x). \nonumber
\end{align}  

\vspace{.3cm}

\textbf{Step 3.} We use the conclusion of Step 2, \eqref{eq:conclusion_Step_2_lyapunov}, and Lemma \ref{lemma:control_flux} to conclude the proof of Proposition \ref{prop:ineq_lyapunov}. 

We integrate \eqref{eq:conclusion_Step_2_lyapunov} between $0$ and $T > 0$ to find
\begin{align*}
\|S_T f\|_{m_\alpha} &+ \alpha \int_0^T \|S_s f\|_{m_{\alpha - 1}} \, \d s \\
&\le \|f\|_{m_\alpha} 
 + C_{\alpha,\Lambda} \int_0^T \int_{\{(x,v) \in \Sigma_+, |v| \le \Lambda\}} \gamma_+ |S_T f| (x,v) |\vp| \d v \d \zeta(x).
\end{align*}
Applying Lemma \ref{lemma:control_flux} and setting $b := C_{\alpha,\Lambda} C_\Lambda > 0$ where $C_\Lambda > 0$ is given by the lemma, we find, 
\begin{align*}
\|S_T f\|_{m_\alpha} + \alpha \int_0^T \|S_s f\|_{m_{\alpha - 1}} \, \d s \le \|f\|_{m_\alpha} +b (1+T) \|f\|_{L^1},
\end{align*}
as claimed. 
\end{proof}

\section{Doeblin-Harris condition}
\label{section:DH}

Recall that $\Omega$ is a $C^2$ bounded domain. In this section, we prove the Doeblin-Harris condition, Theorem \ref{thm:Doeblin-Harris}, by adapting the argument of \cite{Bernou_Transport_Semigroup_2020} to the present case. We also simplify slightly some steps at the end of the demonstration. For any two points $x, y \in \pO$, we write 
\[ ]x,y[ := \{tx + (1-t)y, t \in ]0,1[ \}. \]

\begin{defi}
\label{defi:communique}
For $(x,y) \in (\pO)^2$, we write $x \leftrightarrow y$ and say that $x$ and $y$ see each other if $]x,y[ \subset \Omega$, $n_x \cdot (y-x) > 0$ and $n_y \cdot (x - y) > 0$. 
\end{defi}

We will crucially use the following result on $C^1$ bounded domain given by Evans.

\begin{prop}[Proposition 1.7 in \cite{Evans_1999}]
\label{prop:evans}
For all $C^1$ bounded domain $C$, there exist an integer $P$ and a finite set $\Delta' \subset \partial C$ for which the following holds: for all $z', z'' \in \partial C$, there exist $z_0, \dots, z_P$ with $z' = z_0$, $z'' = z_P$, $\{z_1,\dots,z_{P-1}\} \subset \Delta'$ and $z_k \leftrightarrow z_{k+1}$ for $0 \le k \le P-1$. 
\end{prop}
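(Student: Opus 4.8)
The plan is to prove Proposition \ref{prop:evans} in four stages: openness of the relation $\leftrightarrow$, a local chain-connectivity statement, a global connectivity statement, and a compactness argument producing the finite set $\Delta'$ and the uniform length $P$. Throughout I read $C$ for $\Omega$ in Definition \ref{defi:communique}.

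First I would show that the set $W := \{(x,y) \in \partial C \times \partial C : x \leftrightarrow y\}$ is open. The two transversality conditions in Definition \ref{defi:communique} are strict inequalities of the continuous functions $(x,y) \mapsto n_x \cdot (y-x)$ and $(x,y) \mapsto n_y \cdot (x-y)$ (continuity of $x \mapsto n_x$ being exactly the $C^1$ hypothesis), hence define open conditions. For the condition $]x,y[ \subset C$, the open segment stays at positive distance from $\partial C$ except near its endpoints, while near the endpoints the strict sign conditions force the segment to meet $\partial C$ non-tangentially; a standard transversality argument then shows that $]x',y'[ \subset C$ persists under small boundary perturbations of $x,y$. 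Thus $W$ is open, so that if $x \leftrightarrow z$ there is a neighbourhood $U$ of $x$ in $\partial C$ with $x' \leftrightarrow z$ for every $x' \in U$.

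Second, for each $x \in \partial C$ I would exhibit a neighbourhood inside which any two points are joined by a chain of length two. Shooting a ray from $x$ into $C$, its first boundary hit $z$ is transversal for almost every direction, so $x \leftrightarrow z$; by openness of $W$ there is a neighbourhood $U_x$ of $x$ with $x' \leftrightarrow z$ for all $x' \in U_x$, and since $\leftrightarrow$ is symmetric any two points of $U_x$ are joined by $x' \leftrightarrow z \leftrightarrow x''$. Writing $x \rightsquigarrow y$ when $x,y$ are joined by a finite chain, this local statement shows at once that each reachable set $\mathcal{R}(x) = \{y : x \rightsquigarrow y\}$ is open, and, using that $U_y \cap \mathcal{R}(x) \neq \emptyset$ for $y \in \overline{\mathcal{R}(x)}$, that it is also closed; hence $\mathcal{R}(x)$ is clopen in $\partial C$. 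Third comes global connectivity, namely $x \rightsquigarrow y$ for all boundary points. If $\partial C$ is connected this is immediate. In general I would invoke the connectedness of $C$: every $p \in C$ sees a nonempty open set of boundary points, and the decisive lemma is that this visible set lies in a single class of $\rightsquigarrow$. Granting it, fix $x$ and set $A = \mathcal{R}(x)$; the sets $\{p \in C : p \text{ sees a point of } A\}$ and $\{p \in C : p \text{ sees a point of } \partial C \setminus A\}$ are open, cover $C$, and are disjoint by the lemma, so connectedness of $C$ forces $\partial C \setminus A = \emptyset$, i.e. $\mathcal{R}(x) = \partial C$. This decisive lemma is the main obstacle: the first-hit map from $p$ is continuous, hence class-preserving, only away from the directions along which the ray grazes $\partial C$ tangentially, and one must check that crossing such a grazing direction does not change the chain-class of the hit point — this is where $C^1$ regularity enters, by linking the grazing point and the point just beyond it through an auxiliary interior point lying near the boundary.

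Finally, I would extract $\Delta'$ and $P$ by compactness. Cover the compact set $\partial C$ by finitely many neighbourhoods $U_1,\dots,U_N$ as in the second stage, with hubs $z_1,\dots,z_N$, and form the finite graph on $\{1,\dots,N\}$ with an edge $j \sim j'$ whenever some point of $U_j$ sees some point of $U_{j'}$; global connectivity makes this graph connected. Adjoining to $\Delta'$ all hubs together with one witnessing pair $x_{jj'} \leftrightarrow y_{jj'}$ per edge, any $z' \in U_a$, $z'' \in U_b$ are joined by following a path $a \sim \dots \sim b$ in the finite graph: one inserts the segment $z_{j_k} \leftrightarrow x_k \leftrightarrow y_k \leftrightarrow z_{j_{k+1}}$ for each edge and the single steps $z' \leftrightarrow z_a$, $z_b \leftrightarrow z''$ at the ends. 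This yields a chain through $\Delta'$ of length at most $3N-1$, and padding shorter chains by bouncing between two adjacent hubs produces chains of one common length $P$, as required.
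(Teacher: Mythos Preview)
The paper does not prove this proposition: it is quoted verbatim from Evans \cite{Evans_1999} and used as a black box in Step~2 and Step~4 of the proof of Theorem~\ref{thm:Doeblin-Harris}. There is therefore no ``paper's own proof'' to compare against.

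Your sketch is structurally sound and follows the standard route (openness of $\leftrightarrow$, local hubs, clopen reachable sets, compactness). Two points deserve comment. First, the part you yourself flag as the main obstacle --- the lemma that the chain-class of the first boundary hit does not change when the shooting direction crosses a grazing configuration --- is genuinely the delicate step, and your one-line indication (``linking the grazing point and the point just beyond it through an auxiliary interior point'') is not yet a proof; in Evans's argument this is handled by a careful local analysis near tangential rays, and you would need to supply that. Second, your padding argument has a parity gap: bouncing between two hubs adds exactly~$2$ to the chain length, so from a chain of length~$k$ you reach only lengths $k,k+2,k+4,\dots$; to obtain a \emph{single} $P$ valid for all pairs $(z',z'')$ you must show that some odd cycle is available (e.g.\ three mutually visible points in $\Delta'$), or else allow both $P$ and $P+1$ and then note that the application in Theorem~\ref{thm:Doeblin-Harris} only needs an upper bound on the number of reflections. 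Either fix is routine, but as written the final sentence does not quite deliver a common~$P$.
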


We now state the main result of this section. Recall that for all $(x,v) \in \bar{G}$, we have $\langle x,v \rangle = (1 + \sigma(x,v) + \sqrt{|v|})$ and that $(S_t)_{t \ge 0}$ denotes the semigroup associated to \eqref{eq:main_pb} as introduced in Section \ref{section:setting}.

\begin{thm}
\label{thm:Doeblin-Harris}
For any $\Lambda \ge 2$, there exist $T(\Lambda) > 0$ and a non-negative measure $\nu$ on $G$ with $\nu \not \equiv 0$ such that for all $(x,v) \in G$, for all $f_0 \in L^1(G)$, $f_0 \ge 0$, 
\begin{align}
\label{eq:Doeblin-Harris}
S_{T(\Lambda)} f_0(x,v) \ge \nu(x,v) \int_{\{(y,w) \in G, \langle y, w \rangle \le \Lambda\}} f_0(y,w) \, \d y \d w.
\end{align}
Moreover, $\nu$ satisfies $\langle \nu \rangle \le 1$ and there exists $\kappa > 0$ such that for all $\Lambda \ge 2$, $T(\Lambda) = \kappa \Lambda$.  
\end{thm}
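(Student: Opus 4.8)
The plan is to bound $S_{T(\Lambda)} f_0(x,v)$ from below by retaining only the contribution of trajectories that start in the compact set $\{\langle y,w\rangle\le\Lambda\}$, undergo a prescribed sequence of collisions with $\pO$, and terminate at $(x,v)$, discarding every other (nonnegative) contribution. Since $f_0\ge 0$ and the semigroup is positivity-preserving and linear (Theorem \ref{thm:mass_&_L1_contraction}), such a truncation is legitimate. Concretely I would first write the mild (Duhamel) formulation of \eqref{eq:main_pb}: tracing the characteristic backward from $(x,v)$ and applying the boundary law $\gamma_- = K\gamma_+$ repeatedly, one expands $S_T f_0(x,v)$ as a series indexed by the number $N$ of boundary collisions, whose $N$-collision term is an integral over intermediate velocities $u_1,\dots,u_N$ and collision points weighted by a product of kernels $R(\,\cdot\to\cdot\,;\cdot)$ from \eqref{eq:def_R} and by $f_0$ evaluated at the pre-first-collision point. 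As $R>0$ pointwise, every such term is nonnegative, so it suffices to keep a single well-chosen term and to restrict each intermediate velocity to a compact set on which $R$ admits a uniform positive lower bound by continuity and compactness, exactly as in the estimate $J_{u,x}\ge c_\Lambda$ of Lemma \ref{lemma:control_flux}.

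The geometric content enters through Proposition \ref{prop:evans}. I would trace back from $(x,v)$ to its boundary hitting point $q_-(x,v):=x-\sigma(x,-v)\,v\in\pO$, and, on the source side, let the starting trajectory reach $\pO$ after its first free flight at the point $q(y,w)=y+\sigma(y,w)\,w\in\pO$. Since $q(y,w)$ is essentially arbitrary as $(y,w)$ ranges over the compact set, I invoke Proposition \ref{prop:evans} to join $q(y,w)$ to $q_-(x,v)$ by a chain $z_0,\dots,z_P$ of mutually visible points passing through the fixed finite set $\Delta'$. At each hop I restrict the re-emitted velocity — whose law, by \eqref{eq:def_R} and Proposition \ref{prop:Rice}, is a Rice law in the normal component times a Gaussian in the tangential components — to directions along the chain and to speeds in a fixed compact interval $[a,b]$ with $0<a<b$; there $R$ is bounded below, and the change of variables from velocities to successive collision points has bounded Jacobian. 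Propagating the lower bound along the $O(P)$ hops produces a strictly positive constant multiplying $\int_{\{\langle y,w\rangle\le\Lambda\}}f_0$, which defines $\nu(x,v)$ on the set of targets with $v$ in a compact range and $\sigma(x,-v)$ bounded; this set has positive measure, so $\nu\not\equiv 0$, and $\langle\nu\rangle\le 1$ because $\nu(x,v)$ is dominated by the full transition density of $S_T$ from a fixed source, whose total mass is $1$ by the conservation statement of Theorem \ref{thm:mass_&_L1_contraction}.

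The final ingredient is the time bookkeeping that forces $T(\Lambda)=\kappa\Lambda$. A source with $\langle y,w\rangle\le\Lambda$ reaches $\pO$ in time $\sigma(y,w)\le\Lambda-1$, but this first free-flight time sweeps all of $[0,\Lambda-1]$ as the source varies, so the chain of subsequent hops must be able to consume any remaining time in an interval of width of order $\Lambda$. I would realize this flexibility by inserting padding hops between mutually visible points with adjustable speeds in the compact window; matching the total transit time to exactly $T$ then requires of order $\Lambda$ such hops, which is precisely why $T(\Lambda)$ is taken linear in $\Lambda$ and why the resulting $\nu=\nu_\Lambda$ is permitted to depend on, and degrade with, $\Lambda$.

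The main obstacle, I expect, is the interaction between the velocity restrictions and the exact time constraint in the presence of the correlation structure of the kernel. Unlike the diffuse case of \cite{Bernou_Transport_Semigroup_2020}, the Cercignani–Lampis kernel couples the re-emitted velocity to the incoming one through the Bessel factor $I_0$ and the shifted Gaussian in \eqref{eq:def_R}, so one must verify that the re-emission density remains uniformly positive on the compact velocity windows used at each hop, \emph{for every admissible incoming velocity produced by the previous hop} — a uniformity that is delicate precisely because $R$ is neither bounded below globally nor independent of the incoming velocity. Closing the argument therefore requires choosing the compact velocity windows and the chain geometry simultaneously, so that the lower bounds on the successive $R$ factors, the Jacobians, and the time-matching constraint all hold at once.
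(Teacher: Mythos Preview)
Your overall architecture --- Duhamel expansion, retain a single positive term, connect boundary points via Proposition \ref{prop:evans}, lower-bound $R$ by compactness --- matches the paper. The substantive divergence is in how you absorb the $O(\Lambda)$ time uncertainty coming from $\sigma(y,w)$, and your proposed mechanism has a gap.

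You suggest keeping speeds in a fixed window $[a,b]$ and inserting $O(\Lambda)$ padding hops. But the number of boundary collisions is a fixed discrete index in the Duhamel expansion; you cannot insert hops dynamically. You would have to fix $N\sim\Lambda$ collisions from the outset and then show that the $(N{-}1)$-fold time integral still covers every value of $\sigma(y,w)\in[0,\Lambda]$. That is not obviously impossible, but your sketch does not explain it, and the product of $N\sim\Lambda$ lower bounds on $R$ together with $N$ Jacobian factors would have to be controlled.

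The paper's mechanism is different and cleaner: the number of collisions is \emph{fixed} at $P+1$ (with $P$ from Proposition \ref{prop:evans}, depending only on $\Omega$), and the time uncertainty is absorbed by making each hop \emph{slow}. Concretely, one sets $T=(2P+2)\Lambda$, restricts $\tau_0=\sigma(x,-v)$ to $(\Lambda,2\Lambda)$ (this defines the support of $\nu$), integrates each intermediate hop time $\tau_1,\dots,\tau_{P-1}$ over $(\Lambda,2\Lambda)$, and lets the last hop time $\tau_P=T-\sum_{i=0}^{P-1}\tau_i-\sigma(z,v_P)$ be determined by the constraint; one then checks $\tau_P\in[\Lambda,(P+2)\Lambda]$. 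All intermediate speeds are therefore at most $d(\Omega)/\Lambda$, while the source speed satisfies $|v_P|\le\Lambda^2$. The change of variables is from the re-emitted velocity to the pair (next boundary point, hop time) with the Esposito--Guo--Kim--Marra Jacobian, not a restriction to ``directions along the chain'' (which would be a set of measure zero); the chain from Proposition \ref{prop:evans} is used only afterwards, to show that the resulting $(P{-}1)$-fold boundary integral $h_P(a,b)$ is strictly positive for every pair $(a,b)\in(\pO)^2$.

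Finally, the obstacle you flag --- uniformity of the lower bound on $R(u\to v;x)$ in the incoming velocity $u$ --- is not delicate here. Once both $u$ and $v$ lie in compact sets (as they do under either mechanism), continuity and strict positivity of $R$ give a uniform lower bound $c_{M_1,M_2}>0$ directly; the Bessel coupling plays no special role. The genuine work is the time bookkeeping and the positivity/lower-semicontinuity of $h_P$, not the kernel correlation.
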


\begin{proof}
We only treat the case $d = 3$, as the case $d = 2$ follows from similar (easier) computations. For all $t > 0$, $(x,v) \in \bar{G}$, we write $f(t,x,v) = S_t f_0(x,v)$. For the sake of simplicity we simply write $f(t,x,v)$ for $\gamma f(t,x,v)$ for $(t,x,v) \in \RR_+ \times \Sigma$.

\vspace{.3cm}

\textbf{Step 1.}
We let $(t,x,v) \in (0,\infty) \times G$ and compute a first lower-bound for $f(t,x,v)$. Recall the definitions of $\sigma$ from \eqref{eq:def_sigma} and $q$ from \eqref{eq:def_q}. From the characteristics method, we have
\[ f(t,x,v) = f_0(x - tv,v) \mathbf{1}_{\{t < \sigma(x,-v)\}} + f(t - \sigma(x,-v),q(x,-v),v) \mathbf{1}_{\{t \ge \sigma(x,-v)\}}. \]
Set $y_0 = q(x,-v)$, $\tau_0 = \sigma(x,-v)$. We have, using the boundary condition and the characteristics of the free-transport equation, along with the positivity of $f_0$, 
\begin{align*}
f(t,x,v) &\ge \mathbf{1}_{\{\tau_0 \le t\}} f(t - \tau_0, y_0,v) \\
&\ge \mathbf{1}_{\{\tau_0 \le t\}} \int_{\Sigma_+^{y_0}} f(t- \tau_0,y_0,v_0) \, |v_0 \cdot n_{y_0}| \, R(v_0 \to v; y_0) \, \d v_0 \\
&\ge \mathbf{1}_{\{\tau_0 \le t\}} \int_{\Sigma_+^{y_0}} f(t- \tau_0 - \sigma(y_0,-v_0),q(y_0,-v_0),v_0) \, \mathbf{1}_{\{\tau_0 + \sigma(y_0,-v_0) \le t\}} \,\\
&\qquad \times |v_0 \cdot n_{y_0}| \, R(v_0 \to v; y_0) \, \d v_0 \\
&\ge \mathbf{1}_{\{\tau_0 \le t\}} \int_{\Sigma_+^{y_0}} \mathbf{1}_{\{\tau_0 + \sigma(y_0,-v_0) \le t\}} \,  |v_0 \cdot n_{y_0}| \, R(v_0 \to v; y_0) \, \\
&\qquad \times \int_{\Sigma_+^{q(y_0,-v_0)}} |v_1 \cdot n_{q(y_0,-v_0)}| \, R(v_1 \to v_0; q(y_0,-v_0)) \\ &\qquad  \times f(t- \tau_0 - \sigma(y_0,-v_0),q(y_0,-v_0),v_1) \, \d v_1 \d v_0.
\end{align*}

We write $v_0$ in spherical coordinates $(r,\phi,\vartheta) \in \RR_+ \times [-\pi,\pi] \times [0,\pi]$ in the space directed by the vector $n_{y_0}$. We let $u = u(\phi,\vartheta)$ be the unit vector corresponding to the direction of $v_0$. The condition $v_0 \cdot n_{y_0} > 0$ is equivalent to $\phi \in (-\frac{\pi}{2}, \frac{\pi}{2})$, and we obtain from the previous inequality, using also that $q(y_0,-v_0) = q(y_0,-u)$ as this point is independent of $|v_0| = r$,
\begin{align*}
f(t,x,v) &\ge \mathbf{1}_{\{\tau_0 \le t\}} \int_0^{\infty} \int_{-\frac{\pi}{2}}^{\frac{\pi}{2}} \int_0^{\pi} \mathbf{1}_{\{\tau_0 + \frac{\sigma(y_0,-u)}{r} \le t\}} \, |u \cdot n_{y_0}| \, \sin(\vartheta) \, r^3 \, R(ru \to v; y_0) \\
&\quad \times \int_{\Sigma_+^{q(y_0,-u)}} |v_1 \cdot n_{q(y_0,-u)}| \, R(v_1 \to ru; q(y_0,-u)) \\
&\quad \times f(t-\tau_0 - \frac{\sigma(y_0,-u)}{r}, q(y_0,-u),v_1) \, \d v_1 \d \vartheta \d \phi \d r.
\end{align*}
We follow \cite{Bernou_Transport_Semigroup_2020} and use the change of variable $(y_1,\tau_1) = (q(y_0,-u),\sigma(y_0,-ru))$. The inverse of the determinant of the Jacobian matrix was derived by Esposito, Guo, Kim and Marra \cite[Lemma 2.3]{esposito_non-isothermal_2013} and is given, in the case where $y_1 \leftrightarrow y_0$, by
\begin{align*}
\frac{\tau_1^3 r \sin(\vartheta) |\partial_3 \xi(y_1)|}{|u \cdot n_{y_1}| |\nabla_x \xi(y_1)|},
\end{align*}
where $\xi$ is the $C^1$ function that locally parametrizes $\Omega$, hence 
\[ \Omega = \{y \in \RR^d: \xi(y) < 0\}, \] with the further assumption, which can be made without loss of generality, that $\partial_3 \xi(y_1) \ne 0$. Finally, $u$ is the unit vector giving the direction going from $y_1$ to $y_0$, hence
\begin{align*}
u = \frac{y_0 - y_1}{|y_0 - y_1|}, \qquad r = \frac{|y_0-y_1|}{\tau_1}.
\end{align*}
Setting, for $a \in \pO$,
\[ U_a = \{y \in \pO, y \leftrightarrow a\}, \]
we obtain from the previous inequality, by applying this change of variable,
\begin{align*}
f(t,x,v) 
&\ge \mathbf{1}_{\{\tau_0 \le t\}} \int_0^{t - \tau_0} \int_{U_{y_0}} |u \cdot n_{y_0}| \, |u \cdot n_{y_1}| \, \frac{|y_1 - y_0|^2}{\tau_1^5} \,  \, \\
&\quad \times R \Big( \frac{y_0 - y_1}{\tau_1} \to v; y_0 \Big) \, \frac{|\nabla_x \xi(y_1)|}{|\partial_3 \xi (y_1)|} \\
&\quad \times  \int_{\Sigma_+^{y_1}} f(t - \tau_0 - \tau_1, y_1,v_1) \, |v_1 \cdot n_{y_1}| R\Big( v_1 \to \frac{y_0 - y_1}{\tau_1};y_1) \, \d v_1 \d y_1 \d \tau_1 \\
&\ge 
\mathbf{1}_{\{\tau_0 \le t\}} \int_0^{t - \tau_0} \int_{U_{y_0}}  \frac{|(y_0-y_1) \cdot n_{y_0}| |(y_0 - y_1) \cdot n_{y_1}|}{\tau_1^5}  \, R \Big( \frac{y_0 - y_1}{\tau_1} \to v; y_0 \Big) \\
&\quad \times \int_{\Sigma_+^{y_1}}   |v_1 \cdot n_{y_1}| R\Big( v_1 \to \frac{y_0 - y_1}{\tau_1};y_1 \Big) \, \mathbf{1}_{\{\tau_0 + \tau_1 + \sigma(y_1,-v_1) \le t\}} \\
&\quad \times \, f(t - \tau_0 - \tau_1 - \sigma(y_1,-v_1), q(y_1,-v_1),v_1) \, \d v_1 \d \zeta(y_1) \d \tau_1,
\end{align*}
where we used again the characteristics of the free-transport equation, and with $\d \zeta$ the surface measure of $\pO$, which is given by $\d \zeta(y) = \frac{|\nabla_x \xi(y)|}{|\partial_3 \xi (y)|} \d y$ for any $y \in \pO$. 
We use one last time the boundary condition to obtain
\begin{align*}
f(t,x,v) 
&\ge 
\mathbf{1}_{\{\tau_0 \le t\}} \int_0^{t - \tau_0} \int_{U_{y_0}}  \frac{|(y_0-y_1) \cdot n_{y_0}| |(y_0 - y_1) \cdot n_{y_1}|}{\tau_1^5}  \, R \Big( \frac{y_0 - y_1}{\tau_1} \to v; y_0 \Big) \\
&\quad \times \int_{\Sigma_+^{y_1}}   |v_1 \cdot n_{y_1}| R\Big( v_1 \to \frac{y_0 - y_1}{\tau_1};y_1 \Big) \, \mathbf{1}_{\{\tau_0 + \tau_1 + \sigma(y_1,-v_1) \le t\}} \\
&\quad \times \Big( \int_{\Sigma_+^{q(y_1,-v_1)}} |v_2 \cdot n_{q(y_1,-v_1)}| \, R(v_2 \to v_1; q(y_1,-v_1))  \\ 
&\quad \times \, f(t - \tau_0 - \tau_1 - \sigma(y_1,-v_1), q(y_1,-v_1),v_2) \, \d v_2 \Big) \, \d v_1 \d \zeta(y_1) \d \tau_1.
\end{align*}

\vspace{.3cm}

\textbf{Step 2.} We iterate the method of Step 1 $P-2$ times and make a change of variable to recover an integral over a subset of $G$. 

Let $P \in \mathbb{Z}^+$ be given by Proposition \ref{prop:evans}. We repeat the previous computation $P-2$ times to find
\begin{align*}
f(t,x,v) 
&\ge 
\mathbf{1}_{\{\tau_0 \le t\}} \int_0^{t-\tau_0} \int_{U_{y_0}} \frac{|(y_1 - y_0) \cdot n_{y_0}| |(y_1 - y_0) \cdot n_{y_1}|}{\tau_1^5} \, R\Big( \frac{y_0 - y_1}{\tau_1} \to v; y_0 \Big) \\
&\quad \times \int_0^{t - \tau_0 - \tau_1} \int_{U_{y_1}} \frac{|(y_2 - y_1) \cdot n_{y_1}||(y_2 - y_1) \cdot n_{y_2}|}{\tau_2^5} \, R \Big( \frac{y_1 - y_2}{\tau_2} \to \frac{y_0 - y_1}{\tau_1}; y_1 \Big) \\
&\quad \times \dots \\
&\quad \times \int_0^{t - \sum_{i=0}^{P-1} \tau_i} \int_{U_{y_{P-1}}} \frac{|(y_P - y_{P-1})\cdot n_{y_{P-1}}| |(y_P - y_{P-1}) \cdot n_{y_P}|}{\tau_P^5}  \\
&\quad \times R \Big( \frac{y_{P-1} - y_P}{\tau_P} \to \frac{y_{P-2} - y_{P-1}}{\tau_{P-1}}; y_{P-1} \Big) \, \\
&\quad \times \int_{\Sigma_+^{y_P}} |v_P \cdot n_{y_P}|  R \Big( v_P \to \frac{y_{P-1} - y_P}{\tau_P} ; y_P \Big) \,  \\
&\quad \times f \Big(t - \sum_{i=0}^P \tau_i, y_P, v_P \Big) \, \d v_P \d \zeta(y_P) \d \tau_P \dots \d \zeta(y_1) \d \tau_1.
\end{align*}
On the set $\{t \ge \sum_{i=0}^P \tau_i\}$, by positivity and using the method of characteristics, we have
\[ f \Big(t - \sum_{i=0}^P \tau_i, y_P,v_P \Big) \ge f_0\Big(y_P - \Big(t - \sum_{i=0}^P \tau_i\Big) v_P, v_P \Big) \mathbf{1}_{\{t - \sum_{i=0}^P \tau_i - \sigma(y_P, -v_P) \le 0 \}}, \]
hence, we can lower-bound the previous inequality in the following way:
\begin{align*}
f(t&,x,v) 
\ge 
\mathbf{1}_{\{\tau_0 \le t\}} \int_0^{t-\tau_0} \int_{U_{y_0}} \frac{|(y_1 - y_0) \cdot n_{y_0}| |(y_1 - y_0) \cdot n_{y_1}|}{\tau_1^5} \, R\Big( \frac{y_0 - y_1}{\tau_1} \to v; y_0 \Big) \\
&\quad \times \int_0^{t - \tau_0 - \tau_1} \int_{U_{y_1}} \frac{|(y_2 - y_1) \cdot n_{y_1}||(y_2 - y_1) \cdot n_{y_2}|}{\tau_2^5} \, R \Big( \frac{y_1 - y_2}{\tau_2} \to \frac{y_0 - y_1}{\tau_1}; y_1 \Big) \\
&\quad \times \dots \\
&\quad \times \int_0^{t - \sum_{i=0}^{P-1} \tau_i} \int_{U_{y_{P-1}}} \frac{|(y_P - y_{P-1})\cdot n_{y_{P-1}}| |(y_P - y_{P-1}) \cdot n_{y_P}|}{\tau_P^5}  \\
&\quad \times R \Big( \frac{y_{P-1} - y_P}{\tau_P} \to \frac{y_{P-2} - y_{P-1}}{\tau_{P-1}}; y_{P-1} \Big) \, \\
&\quad \times \int_{\Sigma_+^{y_P}} |v_P \cdot n_{y_P}| \, R \Big( v_P \to \frac{y_{P-1} - y_P}{\tau_P} ; y_P \Big) \, f_0\Big(y_P - \Big(t - \sum_{i=0}^P \tau_i\Big) v_P, v_P \Big) \, \\
&\quad \times  \mathbf{1}_{\{t - \sum_{i=0}^P \tau_i - \sigma(y_P, -v_P) \le 0 \}} \, \d v_P \d \zeta(y_P) \d \tau_P \dots \d \zeta(y_1) \d \tau_1.
\end{align*}
We now set $z = \psi(y_P,\tau_P) = y_P - v_P (t - \sum_{i=0}^P \tau_i)$, and compute the result of this change of variable from $(y_P,\tau_P)$ to $z$. The map $\psi$ is a $C^1$ diffeomorphism with
\begin{align*}
\psi:& \Big\{ (y_P,\tau_P) \in \pO \times \RR_+ : \sigma(y_P,-v_P) > t - \sum_{i=0}^P \tau_i > 0, y_P \leftrightarrow y_{P-1} \Big\} \\
&\to \Big \{z \in \Omega: q(z,v_P) \leftrightarrow y_{P-1}, \sigma(z,v_P) + \sum_{i=0}^{P-1} \tau_i \le t \Big\}.
\end{align*} 
 With this change of variable, $y_P = q(z,v_P)$. Moreover, $t - \sum_{i=0}^P \tau_i = \sigma(z,v_P)$ by definition of $z$, so that 
 \[ \tau_P = t - \sum_{i=0}^{P-1} \tau_i - \sigma(z,v_P). \]
 The inverse of the Jacobian of $\psi$ is $|v_P \cdot n_{y_P}|$, see again Esposito et al. \cite[Lemma 2.3]{esposito_non-isothermal_2013}. Therefore,
\begin{align*}
f(t&,x,v) \ge 
\mathbf{1}_{\{\tau_0 \le t\}} \int_0^{t-\tau_0} \int_{U_{y_0}} \frac{|(y_1 - y_0) \cdot n_{y_0}| |(y_1 - y_0) \cdot n_{y_1}|}{\tau_1^5} \, R\Big( \frac{y_0 - y_1}{\tau_1} \to v; y_0 \Big) \\
&\quad \times \int_0^{t - \tau_0 - \tau_1} \int_{U_{y_1}} \frac{|(y_2 - y_1) \cdot n_{y_1}||(y_2 - y_1) \cdot n_{y_2}|}{\tau_2^5} \, R \Big( \frac{y_1 - y_2}{\tau_2} \to \frac{y_0 - y_1}{\tau_1}; y_1 \Big) \\
&\quad \times \dots \\
&\quad \times \int_0^{t - \sum_{i=0}^{P-2} \tau_i} \int_{U_{y_{P-2}}} \frac{|(y_{P-2} - y_{P-1})\cdot n_{y_{P-1}}| |(y_{P-2} - y_{P-1}) \cdot n_{y_{P-2}}|}{\tau_{P-1}^5}  \\
&\quad \times R \Big( \frac{y_{P-2} - y_{P-1}}{\tau_{P-1}} \to \frac{y_{P-3} - y_{P-2}}{\tau_{P-2}} ; y_{P-2} \Big) \\
& \quad \times \Big\{ \int_G \frac{|(y_{P-1} - q(z,v_P)) \cdot n_{q(z,v_P)}| |(y_{P-1} - q(z,v_P)) \cdot n_{y_{P-1}}|}{\big(t - \sum_{i=0}^{P-1} \tau_i - \sigma(z,v_P)\big)^5} \, \\
& \quad \times  R \Big( \frac{y_{P-1} - q(z,v_P)}{t - \sum_{i=0}^{P-1} \tau_i - \sigma(z,v_P)} \to \frac{y_{P-2} - y_{P-1}}{\tau_{P-1}}; y_{P-1} \Big) \\
&\quad \times R \Big( v_P \to  \frac{y_{P-1} - q(z,v_P)}{t - \sum_{i=0}^{P-1} \tau_i - \sigma(z,v_P)}; q(z,v_P) \Big) \,  \mathbf{1}_{\{q(z,v_P) \leftrightarrow y_{P-1} \}} \, \\ 
&\quad \times \mathbf{1}_{\{ \sigma(z,v_P) + \sum_{i=0}^{P-1} \tau_i \le t\}}  f_0(z,v_P) \d v_P \d z \Big\} \d \zeta(y_{P-1}) \, \d \tau_{P-1} \dots \d \zeta(y_1) \d \tau_1.
\end{align*}
Using Tonelli's theorem, we then have
\begin{align}
\label{eq:DH_final_step2}
f(t,&x,v) \ge \mathbf{1}_{\{\tau_0 \le t\}} \int_G f_0(z,v_P) \\
&\times \int_0^{t-\tau_0} \int_{U_{y_0}} \frac{|(y_1 - y_0) \cdot n_{y_0}| |(y_1 - y_0) \cdot n_{y_1}|}{\tau_1^5} \, R\Big( \frac{y_0 - y_1}{\tau_1} \to v; y_0 \Big) \nonumber \\
&\quad \times \int_0^{t - \tau_0 - \tau_1} \int_{U_{y_1}} \frac{|(y_2 - y_1) \cdot n_{y_1}||(y_2 - y_1) \cdot n_{y_2}|}{\tau_2^5} \, R \Big( \frac{y_1 - y_2}{\tau_2} \to \frac{y_0 - y_1}{\tau_1}; y_1 \Big) \nonumber \\
&\quad \times \dots \nonumber \\
&\quad \times \int_0^{t - \sum_{i=0}^{P-2} \tau_i} \int_{U_{y_{P-2}}} \frac{|(y_{P-2} - y_{P-1})\cdot n_{y_{P-1}}| |(y_{P-2} - y_{P-1}) \cdot n_{y_{P-2}}|}{\tau_{P-1}^5}  \nonumber \\
&\quad \times R \Big( \frac{y_{P-2} - y_{P-1}}{\tau_{P-1}} \to \frac{y_{P-3} - y_{P-2}}{\tau_{P-2}} ; y_{P-2} \Big) \nonumber \\
& \quad \times \frac{|(y_{P-1} - q(z,v_P)) \cdot n_{q(z,v_P)}| |(y_{P-1} - q(z,v_P)) \cdot n_{y_{P-1}}|}{\big(t - \sum_{i=0}^{P-1} \tau_i - \sigma(z,v_P)\big)^5} \, \nonumber \\
& \quad \times  R \Big( \frac{y_{P-1} - q(z,v_P)}{t - \sum_{i=0}^{P-1} \tau_i - \sigma(z,v_P)} \to \frac{y_{P-2} - y_{P-1}}{\tau_{P-1}}; y_{P-1} \Big) \nonumber \\
&\quad \times R \Big( v_P \to  \frac{y_{P-1} - q(z,v_P)}{t - \sum_{i=0}^{P-1} \tau_i - \sigma(z,v_P)}; q(z,v_P) \Big) \, \mathbf{1}_{\{q(z,v_P) \leftrightarrow y_{P-1} \}} \, \nonumber \\ 
&\quad \times \mathbf{1}_{\{ \sigma(z,v_P) + \sum_{i=0}^{P-1} \tau_i < t\}}  \, \d \zeta(y_{P-1}) \, \d \tau_{P-1} \dots \d \zeta(y_1) \d \tau_1 \d v_P \d z. \nonumber
\end{align}

\vspace{.3cm}

\textbf{Step 3.} We choose the value of $t$ and control the time integrals in \eqref{eq:DH_final_step2}. 
Let $\Lambda > 2$ and set $t = (2P+2) \Lambda$, $\tau_0 \in (\Lambda,2\Lambda)$, i.e for all $(x,v) \in G$ such that $\sigma(x,-v) \not \in (\Lambda,2\Lambda)$, we simply set $\nu(x,v) = 0$. Note that, for any $\Lambda > 0$, one can find a couple $(x,v) \in G$ such that $\sigma(x,-v) \in (\Lambda,2\Lambda)$, which also implies $|v| \le \frac{d(\Omega)}{\Lambda}$. 

For all $i \in \{1,\dots,P-1\}$, we lower bound the integral with respect to $\tau_i$ by an integral over $(\Lambda,2\Lambda)$. We also lower bound the integral with respect to $(z,v_P)$ by an integral over $D_\Lambda = \{(z,v_P) \in G: \langle z,v_P \rangle \le \Lambda\}$, which is not empty since $\Lambda > 2$. Note that, on $D_\Lambda$, 
\begin{align}
\label{eq:prop_D_lambda}
 \sigma(z,v_P) \le \Lambda, \qquad |v_P| \le \Lambda^2.
 \end{align}

With $\tau_0,\dots,\tau_{P-1} \in (\Lambda,2\Lambda)$, $\sigma(z,v_P) \le \Lambda$ and $t = (2P  +2)\Lambda$, we have first
\begin{subequations}
\begin{align}
\label{eq:t-quanti-1}
(2P + 2) \Lambda - 2P\Lambda - \Lambda = \Lambda &\le t - \sum_{i=0}^{P-1} \tau_i - \sigma(z,v_P) \\
\label{eq:t-quanti-2}
t - \sum_{i=0}^{P-1} \tau_i - \sigma(z,v_P) &\le (2P+2)\Lambda - P\Lambda = (P+2)\Lambda, 
\end{align}
\end{subequations}
thus, with those choices,
\[ \mathbf{1}_{\{ \sum_{i=0}^{P-1} \tau_i + \sigma(z,v_P) \le t\}} = 1. \]
Moreover, recalling that for all $i \in \{1,\dots,P-1\}$ the integration interval for $\tau_i$ in \eqref{eq:DH_final_step2} is $[0,t-\sum_{j=0}^{i-1} \tau_j]$, and since 
\[ t - \sum_{j=0}^{i-1} \tau_j \ge (2P+2)\Lambda - 2i\Lambda = 2\Lambda + 2(P-i)\Lambda \ge 2\Lambda, \]
the lower bound detailed above using an integral over $[\Lambda,2\Lambda]$ for $\tau_i$ is legitimate. 

Applying those lower bounds, we find
\begin{align}
\label{eq:dh_step_3_R}
f(t,x,v) \ge &\mathbf{1}_{\{\tau_0 \in [\Lambda,2\Lambda]\}} \int_{D_\Lambda} f_0(z,v_P) \\
&\quad \times \int_\Lambda^{2\Lambda} \int_{U_{y_0}} \frac{|(y_1 - y_0) \cdot n_{y_0}| |(y_1 - y_0) \cdot n_{y_1}|}{\tau_1^5} \, R\Big( \frac{y_0 - y_1}{\tau_1} \to v; y_0 \Big)  \nonumber \\
&\quad \times \int_\Lambda^{2\Lambda} \int_{U_{y_1}} \frac{|(y_2 - y_1) \cdot n_{y_1}||(y_2 - y_1) \cdot n_{y_2}|}{\tau_2^5} \, R \Big( \frac{y_1 - y_2}{\tau_2} \to \frac{y_0 - y_1}{\tau_1}; y_1 \Big) \nonumber \\
&\quad \times \dots \nonumber \\
&\quad \times \int_\Lambda^{2\Lambda} \int_{U_{y_{P-2}}} \frac{|(y_{P-2} - y_{P-1})\cdot n_{y_{P-1}}| |(y_{P-2} - y_{P-1}) \cdot n_{y_{P-2}}|}{\tau_{P-1}^5} \nonumber \\
&\quad \times R \Big( \frac{y_{P-2} - y_{P-1}}{\tau_{P-1}} \to \frac{y_{P-3} - y_{P-2}}{\tau_{P-2}} ; y_{P-2} \Big) \nonumber \\
& \quad \times \frac{|(y_{P-1} - q(z,v_P)) \cdot n_{q(z,v_P)}| |(y_{P-1} - q(z,v_P)) \cdot n_{y_{P-1}}|}{\big(t - \sum_{i=0}^{P-1} \tau_i - \sigma(z,v_P)\big)^5} \, \nonumber \\
& \quad \times  R \Big( \frac{y_{P-1} - q(z,v_P)}{t - \sum_{i=0}^{P-1} \tau_i - \sigma(z,v_P)} \to \frac{y_{P-2} - y_{P-1}}{\tau_{P-1}}; y_{P-1} \Big)  \nonumber \\
&\quad \times R \Big( v_P \to  \frac{y_{P-1} - q(z,v_P)}{t - \sum_{i=0}^{P-1} \tau_i - \sigma(z,v_P)}; q(z,v_P) \Big) \, \mathbf{1}_{\{q(z,v_P) \leftrightarrow y_{P-1} \}} \nonumber  \\
&\quad \times \d \zeta(y_{P-1}) \, \d \tau_{P-1} \dots \d \zeta(y_1) \d \tau_1 \d v_P \d z. \nonumber
\end{align}
Note that, for all $u,v \in \RR^d$, $x \in \pO$, with $ u \cdot n_x > 0$, $v \cdot n_x < 0$, 
\begin{align}
\label{eq:mino_R}
 R(u\to v;x) \ge \frac{1}{\theta(x) \rp (2 \pi \theta(x) \rt (2 - \rt))^{\frac{d-1}{2}}} e^{-\frac{|\vt - (1-\rt) \ut|^2}{2 \theta(x) \rt (2-\rt)}} e^{-\frac{||\vp| - (1-\rp)^{\frac12} |\up||^2}{2 \theta(x) \rp}}, 
 \end{align}
where we used that $I_0(\frac{(1-\rp)^{\frac{1}{2}} \up \cdot \vp}{\theta(x) \rp}) \ge e^{-\frac{(1-\rp)^{\frac12} |\up| |\vp|}{\theta(x) \rp}}$, and by continuity of the right-hand side of \eqref{eq:mino_R}, using that $x \to \theta(x)$ and $x \to n_x$ are continuous, we obtain, by a compactness argument, that for all $M_1, M_2 > 0$,
\begin{align*}
\inf_{x \in \pO, |u| \le M_1, |v| \le M_2} R(u \to v; x) \ge c_{M_1,M_2} > 0
\end{align*}
with $c_{M_1,M_2}$ depending only on $M_1, M_2$.
We now study the arguments of $R$ inside the integrals of \eqref{eq:dh_step_3_R}. We have
\begin{enumerate}
\item $|v_P| \le \Lambda^2$, by \eqref{eq:prop_D_lambda},
\item for all $i \in \{0,\dots,P-2\}$, $|\frac{y_i - y_{i+1}}{\tau_{i+1}}| \le \frac{d(\Omega)}{\Lambda}$,
\item $|\frac{y_{P-1} - q(z,v_P)}{t - \sum_{i=0}^{P-1} \tau_i - \sigma(z,v_P)} | \le \frac{d(\Omega)}{\Lambda}$, 
\end{enumerate}
where the last inequality uses \eqref{eq:t-quanti-1}. 
Finally we introduce a measure $\underline{R}$ on $\Sigma_-$ such that for all $(y,v) \in \Sigma_-$,
\[ \underline{R}(y,v) = \inf_{|u| \le \frac{d(\Omega)}{\Lambda}, u \cdot n_y > 0} R(u \to v; y). \]
Note that for all $(y,v) \in \Sigma_-$, $\underline{R}(y,v) > 0$. A straightforward application of those bounds, along with the definition of $c_{\cdot,\cdot}$ leads to 
\begin{align*}
f(t,x,v) &\ge \mathbf{1}_{\{\tau_0 \in [\Lambda,2\Lambda]\}} \underline{R}(y_0,v) c_{\frac{d(\Omega)}{\Lambda}, \frac{d(\Omega)}{\Lambda}}^{P-1} c_{\Lambda^2, \frac{d(\Omega)}{\Lambda}} \int_{D_\Lambda} f_0(z,v_P) \\
&\quad \times \int_\Lambda^{2\Lambda} \int_{U_{y_0}} \frac{|(y_1 - y_0) \cdot n_{y_0}| |(y_1 - y_0) \cdot n_{y_1}|}{\tau_1^5}   \\
&\quad \times \int_\Lambda^{2\Lambda} \int_{U_{y_1}} \frac{|(y_2 - y_1) \cdot n_{y_1}||(y_2 - y_1) \cdot n_{y_2}|}{\tau_2^5}  \\
&\quad \times \dots  \\
&\quad \times \int_\Lambda^{2\Lambda} \int_{U_{y_{P-2}}} \frac{|(y_{P-2} - y_{P-1})\cdot n_{y_{P-1}}| |(y_{P-2} - y_{P-1}) \cdot n_{y_{P-2}}|}{\tau_{P-1}^5}  \\
& \quad \times \frac{|(y_{P-1} - q(z,v_P)) \cdot n_{q(z,v_P)}| |(y_{P-1} - q(z,v_P)) \cdot n_{y_{P-1}}|}{\big(t - \sum_{i=0}^{P-1} \tau_i - \sigma(z,v_P)\big)^5} \, \\
&\quad \times \, \mathbf{1}_{\{q(z,v_P) \leftrightarrow y_{P-1} \}} \, \d \zeta(y_{P-1}) \, \d \tau_{P-1} \dots \d \zeta(y_1) \d \tau_1 \d v_P \d z.
\end{align*}
Since $\int_\Lambda^{2\Lambda} \frac{d\tau}{\tau^5} < \infty$, we deduce immediatly that for some constant $c_0$ independent of $(y_0,v)$, whose value may vary from line to line 
\begin{align}
\label{eq:DH_Final_3}
f(t,x,v) &\ge c_0  \mathbf{1}_{\{\tau_0 \in [\Lambda,2\Lambda]\}} \underline{R}(y_0,v) \int_{D_\Lambda} f_0(z,v_P) \\
&\quad \times \int_{U_{y_0}} |(y_1 - y_0) \cdot n_{y_0}| |(y_1 - y_0) \cdot n_{y_1}|   \nonumber \\
&\quad \times \int_{U_{y_1}} |(y_2 - y_1) \cdot n_{y_1}||(y_2 - y_1) \cdot n_{y_2}| \nonumber \\
&\quad \times \dots \nonumber \\
&\quad \times \int_{U_{y_{P-2}}} |(y_{P-2} - y_{P-1})\cdot n_{y_{P-1}}| |(y_{P-2} - y_{P-1}) \cdot n_{y_{P-2}}|  \nonumber \\
& \quad \times |(y_{P-1} - q(z,v_P)) \cdot n_{q(z,v_P)}| |(y_{P-1} - q(z,v_P)) \cdot n_{y_{P-1}}| \nonumber \\
&\quad \times \, \mathbf{1}_{\{q(z,v_P) \leftrightarrow y_{P-1} \}} \, \d \zeta(y_{P-1}) \dots \d \zeta(y_1)  \d v_P \d z. \nonumber
\end{align}

\vspace{.3cm}

\textbf{Step 4.} For a couple of points $(a,b) \in (\pO)^2$, we set 
\begin{align*}
h_P(a,b) &= \int_{U_a} |(y_1 - a) \cdot n_a| |(y_1 - a) \cdot n_{y_1}| \\
&\quad \times \int_{U_{y_1}} |(y_2 - y_1) \cdot n_{y_1}| |(y_2 - y_1) \cdot n_{y_2}| \times \dots \\
&\quad \times \int_{U_{y_{P-2}}} |(y_{P-1} - y_{P-2}) \cdot n_{y_{P-1}}| |(y_{P-1} - y_{P-2}) \cdot n_{y_{P-2}}| \\
&\quad \times |(y_{P-1} - b) \cdot n_b| |(y_{P-1} - b) \cdot n_{y_{P-1}}| \, \mathbf{1}_{\{b \leftrightarrow y_{P-1}\}} \, \d \zeta(y_{P-1}) \dots \d \zeta(y_1).
\end{align*}
In this step, we want to show that, for all $y_0 \in \pO$, $b \to h_P(y_0,b)$ is lower semicontinuous and positive. To this aim, we present a simplified proof of the argument given in \cite{Bernou_Transport_Semigroup_2020}. We can rewrite $h_P$ as 
\[ h_P(a,b) = \int_{\{(y_1, \dots,y_{P-1}) \in D(a,b)\}} N(a,y_1,\dots,y_{P-1}, b) \d \zeta(y_{P-1}) \dots \d \zeta(y_1), \]
with 
\begin{align*}
D(a,b) &:= \Big\{(y_1,\dots,y_{P-1}) \in (\pO)^{P-1}: \\
&\qquad \qquad y_1 \leftrightarrow a, y_2 \leftrightarrow y_1, \dots, y_{P-1} \leftrightarrow y_{P-2}, b \leftrightarrow y_{P-1} \Big\},
\end{align*}
and 
\begin{align*}
N(a,y_1,\dots,y_{P-1},b) &:= |(y_1 -a) \cdot n_{a}| |(y_1 - a) \cdot n_{y_1}| |(y_{P-1} - b) \cdot n_b| |(y_{P-1} - b) \cdot n_{y_{P-1}}| \\
&\quad \times \Pi_{i = 1}^{P-2} |(y_{i+1} - y_i) \cdot n_{y_i}| |(y_{i+1} - y_i) \cdot n_{y_{i+1}}|.
\end{align*}
By regularity assumption, if $(z_1,z_2) \in (\pO)^2$ with $z_1 \leftrightarrow z_2$, there exists $\epsilon > 0$ such that $B(z_1,\epsilon) \cap \pO \leftrightarrow B(z_2,\epsilon) \cap \pO$, i.e. for all $p \in B(z_1,\epsilon) \cap \pO$, $q \in B(z_2,\epsilon) \cap \pO$, we have $p \leftrightarrow q$, see \cite[Lemma 38]{bernou_fournier_collisionless_2019}. Combining this with the statement of Proposition \ref{prop:evans}, we find that 
\begin{align}
\label{eq:conclusion_Hausdorff}
 \mathcal{H}(D(a,b)) > 0, 
 \end{align}
where we recall that $\mathcal{H}$ denotes the $d-1$ dimensional Hausdorff measure. We set, for all $a \in \pO$, 
\[ D(a) := \Big \{(y_1,\dots,y_{P-1}) \in (\pO)^{P-1}: y_1 \leftrightarrow a, y_2 \leftrightarrow y_1, \dots, y_{P-1} \leftrightarrow y_{P-2} \Big\}. \]
For $a \in \pO$ and $(y_1,\dots,y_{P-1}) \in D(a)$, for all $b \in \pO$ such that $b \leftrightarrow y_{P-1}$, we have $N(a,y_1,\dots,y_{P-1},b) > 0$ according to Definition \ref{defi:communique}. Using \eqref{eq:conclusion_Hausdorff}, one concludes that for all $(a,b) \in (\pO)^2$, $h_P(a,b) > 0$. Moreover, the map $b \to N(a,y_1,\dots,y_{P-1},b)$ is continuous since $z \to n_z$ is continuous. According to \cite[Lemma 2.3]{Evans_1999}, for all $z \in \pO$, $U_z$ is open and non-empty. Hence for all $y_{P-1} \in \pO$, $b \to \mathbf{1}_{U_{y_{P-1}}}(b)$ is lower semicontinuous. We conclude that, for all $a \in \pO$, $(y_1,\dots,y_{P-1}) \in D(a)$,
\[ b \to N(a,y_1,\dots,y_{P-1},b) \mathbf{1}_{\{y_{P-1} \leftrightarrow b\}} \]
is lower semicontinuous. For $a \in \pO$, $(b_n)_{n \ge 1}$ a sequence of $\pO$ converging towards $b \in \pO$, we obtain 
\begin{align*}
0 < h_P(a,b) &\le \int_{D(a)} \liminf_{n \to \infty} N(a,y_1,\dots,y_{P-1},b_n) \mathbf{1}_{\{y_{P-1} \leftrightarrow b_n\}} \, \d \zeta(y_1) \dots \d \zeta(y_{P-1}) \\
&\le \liminf_{n \to \infty} h_P(a,b_n),
\end{align*}
using Fatou's lemma. Thus $\pO \ni b \to h_P(a,b)$ is also lower semicontinuous and positive for all $a \in \pO$. 

\vspace{.3cm}

\textbf{Step 5.} We use Step 4 to conclude the proof. Since $\pO$ is compact, we deduce from the previous step that for all $a \in \pO$, 
\[ \mu(a) := \inf_{b \in \pO} h_P(a,b) > 0. \]
With this at hand, we have from \eqref{eq:DH_Final_3}
\begin{align*}
f(t,x,v) &\ge c_0 \mathbf{1}_{\{\tau_0 \in [\Lambda,2\Lambda]\}} \underline{R}(y_0,v) \int_{D_\Lambda} f_0(z,v_P) \, h_P(y_0, q(z,v_P)) \, \d v_P \d z \\
&\ge c_0 \mathbf{1}_{\{\tau_0 \in [\Lambda,2\Lambda] \}} \underline{R}(y_0,v) \mu(y_0) \int_{D_\Lambda} f_0(z,v_P) \, \d v_P \d z
\end{align*}
and, recalling that $\tau_0 = \sigma(x,-v)$, $y_0 = q(x,-v)$, we set 
\[ \nu(x,v) = c_0 \mathbf{1}_{\{\sigma(x,-v) \in [\Lambda,2\Lambda]\}} \underline{R}(q(x,-v),v) \mu(q(x,-v)) \]
and $T(\Lambda):= t = (2P + 2)\Lambda$, which is indeed of the form $\kappa \Lambda$ for $\kappa = (2P+2) > 0$. 

Finally, note that if $f \in L^1(G)$ with $f \ge 0$, $\supp(f) \subset D_{\Lambda}$ and $\langle f \rangle = 1$, we have
\begin{align*}
S_T f(x,v) \ge \nu(x,v) \langle f \rangle,
\end{align*}
and integrating this equality over $G$ and using the mass conservation leads to 
\begin{align*}
\langle \nu \rangle \le 1. 
\end{align*}
\end{proof}

\begin{rmk}[Regarding the constructive property of $\nu$]
One might wonder whether the measure $\nu$ is explicit, leading to a constructive rate of convergence. There are two compactness arguments in the previous proof: one gives the value of the constant $c_0$ in Step 3, and is quite artificial. Indeed, for a given $\Lambda$ and fixed parameters of the boundary condition, one could easily find a constructive lower bound for the $c_{\cdot,\cdot}$ involved in the proof. The situation is a bit less clear for the compactness argument of Step 4, which is the same as the one used in the proof of the Doeblin-Harris condition for the Maxwell boundary condition. On this matter, we refer to \cite[Remark 8]{Bernou_Transport_Semigroup_2020}, where it is proven that a constructive lower bound can be found at least when $\Omega$ is the unit disk. More generally, we expect to be able to find a lower bound for every given $\Omega$.
\end{rmk}

\section{Proof of the main results}
\label{section:main}

As mentioned above, starting from the Lyapunov inequalities of Proposition \ref{prop:ineq_lyapunov} and the Doeblin-Harris condition, Theorem \ref{thm:Doeblin-Harris}, the proof of Theorem \ref{thm:Main} follows from the same strategy as the one applied in \cite{Bernou_Transport_Semigroup_2020} and introduced in \cite{Canizo_Mischler_2020}. We provide a full proof for the sake of completeness and to clarify all the required adaptations. Let us emphasize the fact that the inclusion of $|v|$ in the quantity $\langle x,v \rangle$, required to obtain the Lyapunov inequalities of Proposition \ref{prop:ineq_lyapunov}, prevents us from using the logarithm to derive the optimal rate of convergence as was done in \cite{Bernou_Transport_Semigroup_2020}. Instead, we can only use polynomial weights depending on some arbitrary small exponent $\epsilon$. We write $\vertiii{T}_{A \to B}$ for the operator norm of the linear operator $T$ acting from $A$ to $B$.

\subsection{Contraction property in well-chosen norm}

The following lemma introduces new norms based on the weights for which Lyapunov inequalities were established in Section \ref{section:Lyapunov}. We obtain a norm in which the semigroup $(S_t)_{t \ge 0}$ is more than a contraction in the large sense. Recall the definition of $\langle\cdot, \cdot \rangle$ from \eqref{eq:def_bracket}.

\begin{lemma}[Contraction in well-chosen norm]
\label{lemma:contraction_norm}
Fix $\epsilon \in (0,1)$ and, for $p \in (1+\epsilon,d+1]$, set $m^{\epsilon}_{p}(x,v) = \langle x,v \rangle^{p-\epsilon}$ on $G$. There exists $T_0 > 0$ such that for all $T \ge T_0$, there exist $\beta(T) > 0$, $\alpha(T) = C_3 \beta(T) T$ with $C_3 > 0$ constant such that, for all $f \in L^1_{m^{\epsilon}_{d+1}}(G)$ with $\langle f \rangle = 0$, we have
\begin{align}
\label{eq:Contraction_Lemma}
\|S_T f\|_{L^1} + \beta \|S_T f\|_{m^{\epsilon}_{d+1}} + \alpha \|S_T f\|_{m^{\epsilon}_d} \le \|f\|_{L^1} + \beta \|f\|_{m^{\epsilon}_{d+1}} + \frac{\alpha}{3} \|f\|_{m^{\epsilon}_{d}}, 
\end{align}
so that, setting
\[ \vertiii{\cdot}_{m^{\epsilon}_{d+1}} := \|\cdot\|_{L^1} + \beta \|\cdot\|_{m^{\epsilon}_{d+1}} + \alpha \|\cdot\|_{m^{\epsilon}_d}, \]
there holds $\vertiii{S_T f}_{m^{\epsilon}_{d+1}} \le \vertiii{f}_{m^{\epsilon}_{d+1}}$. Moreover, there exists $M_{d+1}^{\epsilon} > 1$ such that for all $f \in L^1_{m^{\epsilon}_{d+1}}(G)$ with $\langle f \rangle = 0$, 
\[ \| S_T f\|_{m^{\epsilon}_{d+1}} \le M_{d+1}^{\epsilon} \|f\|_{m^{\epsilon}_{d+1}}. \]
\end{lemma}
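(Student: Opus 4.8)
The plan is to combine the two Lyapunov inequalities of Proposition~\ref{prop:ineq_lyapunov} with the Doeblin--Harris minorization of Theorem~\ref{thm:Doeblin-Harris}, in the spirit of \cite{Canizo_Mischler_2020, Bernou_Transport_Semigroup_2020}. Set $\gamma := d+1-\epsilon$, so $m^{\epsilon}_{d+1} = \langle\cdot\rangle^\gamma$ and $m^{\epsilon}_{d} = \langle\cdot\rangle^{\gamma-1}$; since $d \in \{2,3\}$ and $\epsilon \in (0,1)$ we have $\gamma \in (2,d+1)$ and $\gamma-1 \in (1,d+1)$, so the proposition applies at both exponents $\gamma$ and $\gamma-1$. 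At level $\gamma$ it reads
\[ \|S_T f\|_{m^{\epsilon}_{d+1}} + \gamma \int_0^T \|S_s f\|_{m^{\epsilon}_{d}}\,\d s \le \|f\|_{m^{\epsilon}_{d+1}} + b(1+T)\|f\|_{L^1}. \]
At level $\gamma-1$, applied to the datum $S_s f$ over the time $T-s$ and after discarding the non-negative dissipation term, it gives $\|S_T f\|_{m^{\epsilon}_{d}} \le \|S_s f\|_{m^{\epsilon}_{d}} + b'(1+T)\|f\|_{L^1}$ for each $s \in [0,T]$, i.e. the reverse control $\|S_s f\|_{m^{\epsilon}_{d}} \ge \|S_T f\|_{m^{\epsilon}_{d}} - b'(1+T)\|f\|_{L^1}$. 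Integrating over $[0,T]$ yields $\int_0^T \|S_s f\|_{m^{\epsilon}_{d}}\,\d s \ge T\|S_T f\|_{m^{\epsilon}_{d}} - Tb'(1+T)\|f\|_{L^1}$. Inserting this into the first display, multiplying by $\beta$ and setting $\alpha := \gamma T \beta$ (so that $C_3 = \gamma$ is independent of $T$), I obtain
\[ \beta\|S_T f\|_{m^{\epsilon}_{d+1}} + \alpha\|S_T f\|_{m^{\epsilon}_{d}} \le \beta\|f\|_{m^{\epsilon}_{d+1}} + \beta(1+T)(b+\gamma T b')\|f\|_{L^1}. \]

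For the $L^1$ part I use Theorem~\ref{thm:Doeblin-Harris} at $T = T(\Lambda) = \kappa\Lambda$. Splitting $f = f_+ - f_-$ and applying the minorization to the non-negative data $f_\pm$, both $S_T f_\pm$ dominate $\nu\,c$ pointwise, where $c := \min\big(\int_{D_\Lambda} f_+,\int_{D_\Lambda} f_-\big)$ and $D_\Lambda := \{\langle y,w\rangle \le \Lambda\}$; subtracting this common minorant and invoking mass conservation $\|S_T f_\pm\|_{L^1} = \|f_\pm\|_{L^1}$ gives $\|S_T f\|_{L^1} \le \|f\|_{L^1} - 2\langle\nu\rangle\,c$. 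Here the hypothesis $\langle f\rangle = 0$ is essential: it forces $\|f_+\|_{L^1} = \|f_-\|_{L^1} = \tfrac12\|f\|_{L^1}$, and combined with the tail estimate $\int_{D_\Lambda^c} f_\pm \le \Lambda^{-(\gamma-1)}\|f\|_{m^{\epsilon}_{d}}$ (valid because $\langle y,w\rangle^{\gamma-1} > \Lambda^{\gamma-1}$ off $D_\Lambda$) it yields $c \ge \tfrac12\|f\|_{L^1} - \Lambda^{-(\gamma-1)}\|f\|_{m^{\epsilon}_{d}}$. Writing $\eta := \langle\nu\rangle \in (0,1]$ this produces the quantitative contraction
\[ \|S_T f\|_{L^1} \le (1-\eta)\|f\|_{L^1} + \frac{2\eta}{\Lambda^{\gamma-1}}\|f\|_{m^{\epsilon}_{d}}. \]

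Adding the last two displays and comparing with the target right-hand side $\|f\|_{L^1} + \beta\|f\|_{m^{\epsilon}_{d+1}} + \tfrac{\alpha}{3}\|f\|_{m^{\epsilon}_{d}}$, inequality~\eqref{eq:Contraction_Lemma} follows once the two scalar conditions $\beta(1+T)(b+\gamma T b') \le \eta$ and $2\eta\Lambda^{-(\gamma-1)} \le \alpha/3 = \gamma T\beta/3$ both hold, since each renders the corresponding coefficient non-positive. These sandwich $\beta$ between $6\eta\kappa^{\gamma-1}/(\gamma T^\gamma)$ and $\eta/[(1+T)(b+\gamma T b')]$, a non-empty range precisely when $6\kappa^{\gamma-1}(1+T)(b+\gamma T b') \le \gamma T^\gamma$. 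Since $\Lambda = T/\kappa$, the left-hand side is $O(T^2)$ while the right-hand side is of order $T^\gamma$; as $\gamma > 2$ this holds for all $T \ge T_0$ with $T_0$ large enough (and then $\Lambda = T/\kappa \ge 2$). Fixing such $T$, any admissible $\beta$, and $\alpha = \gamma T\beta$ proves \eqref{eq:Contraction_Lemma}, hence $\vertiii{S_T f}_{m^{\epsilon}_{d+1}} \le \vertiii{f}_{m^{\epsilon}_{d+1}}$. The final growth bound needs only the level-$\gamma$ inequality: dropping its dissipation term and using $\|f\|_{L^1} \le \|f\|_{m^{\epsilon}_{d+1}}$ gives $\|S_T f\|_{m^{\epsilon}_{d+1}} \le (1+b(1+T))\|f\|_{m^{\epsilon}_{d+1}}$, so $M^{\epsilon}_{d+1} := 1+b(1+T) > 1$ works.

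The main obstacle is exactly the balance in the previous paragraph. Iterating the Lyapunov inequality, together with the lossy lower bound on the dissipation integral, produces a mass error growing like $T^2$, whereas the contraction extracted from the minorization gains only $\eta\,\|f\|_{L^1}$ at the $L^1$ level and a factor $\Lambda^{-(\gamma-1)} \sim T^{-(\gamma-1)}$ at the weighted level. Reconciling these requires the subgeometric margin $\gamma = d+1-\epsilon > 2$, which is what guarantees that $T^\gamma$ eventually dominates $T^2$; getting this bookkeeping right, and checking that $C_3 = \gamma$ remains $T$-independent, is the delicate part, everything else being a routine rearrangement of the inequalities above.
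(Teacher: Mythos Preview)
Your proof is correct and follows essentially the same strategy as the paper's: combine the two Lyapunov inequalities of Proposition~\ref{prop:ineq_lyapunov} to obtain a single integrated estimate, then feed in the Doeblin--Harris minorization of Theorem~\ref{thm:Doeblin-Harris} to close the contraction, with the key balance being exactly $T^{\gamma}$ against $T^{2}$ and $\gamma = d+1-\epsilon > 2$. The only cosmetic difference is that where the paper argues via the classical Harris dichotomy (splitting into the cases $\|f\|_{m^{\epsilon}_{d}} \le A\|f\|_{L^{1}}$ and $\|f\|_{m^{\epsilon}_{d}} > A\|f\|_{L^{1}}$), you instead derive a single unconditional $L^{1}$-contraction inequality with a weighted remainder term and match the scalar coefficients directly; the resulting choice $\alpha = \gamma T\beta$ and the final $M^{\epsilon}_{d+1}$ bound agree with the paper's.
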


\begin{proof}
\textbf{Step 1.} We use Proposition \ref{prop:ineq_lyapunov} to obtain a new integral inequality. For all $T > 0$, according to the lemma, there exists $C_1,C_2,b_1,b_2 > 0$ such that for all $f \in L^1_{m^{\epsilon}_{d+1}}(G)$,

\vspace{-.5cm}
\begin{subequations} 
\begin{align}
\| S_T f\|_{m^{\epsilon}_{d+1}} + C_1 \int_0^T \|S_t f\|_{m^{\epsilon}_{d}} \, \d t  &\le \|f\|_{m^{\epsilon}_{d+1}} + b_1 (1 + T) \|f\|_{L^1},  \label{eq:Step_1_1} \\
\hbox{ and } \| S_T f\|_{m^{\epsilon}_{d}} + C_2 \int_0^T \|S_t f\|_{m^{\epsilon}_{d-1}} \, \d t  &\le \|f\|_{m^{\epsilon}_{d}} + b_2 (1 + T) \|f\|_{L^1}. \label{eq:Step_1_2}
\end{align}
\end{subequations}
Let $t \in (0,T)$. We deduce first from \eqref{eq:Step_1_2},
\[ \| S_{T-t} S_t f\|_{m^{\epsilon}_{d}} \le \|S_t f\|_{m^{\epsilon}_d} + b_2 (1 + T - t) \|S_t f\|_{L^1}, \]
which we rewrite as 
\[ \|S_T f\|_{m^{\epsilon}_d} - b_2 (1 + T - t) \|S_t f\|_{L^1} \le \|S_t f\|_{m^{\epsilon}_d}. \]
We plug this inside \eqref{eq:Step_1_1} to obtain:
\begin{align*}
\|S_T f\|_{m^{\epsilon}_{d+1}} &+ C_1 \int_0^T \Big( \|S_T f\|_{m^{\epsilon}_d} - b_2 (1 + T - t) \|S_t f\|_{L^1} \Big) \d t \\ 
&\le \|f\|_{m^{\epsilon}_{d+1}} + b_1 (1 + T) \|f\|_{L^1}.
\end{align*}
Finally, we can use the $L^1$ contraction from Theorem \ref{thm:mass_&_L1_contraction} to get
\begin{align}
\label{eq:Step_1_Ccl}
\|S_T f\|_{m^{\epsilon}_{d+1}} + C_1 T \|S_T f\|_{m^{\epsilon}_d} \le \|f\|_{m^{\epsilon}_{d+1}} + b_1' ( 1 + T + T^2) \|f\|_{L^1},
\end{align}
with $b_1' > 0$ constant, independent of $T$. 

\vspace{.3cm}

\textbf{Step 2.} According to Theorem \ref{thm:Doeblin-Harris}, for all $\rho > 2$, there exists $T(\rho) = \xi \rho$ for some constant $\xi > 0$ and a measure $\nu$ on $G$ with $\nu \not \equiv 0$ such that 
\[ S_{T(\rho)} h \ge \nu \int_{\{(x,v) \in G, \langle x,v \rangle \le \rho\}} h \, \d v \d x, \]
for all $h \in L^1(G)$ with $h \ge 0$. 
By assumption, $f \in L^1_{m^{\epsilon}_{d+1}}(G)$ and $\langle f \rangle = 0$. We set, for any $\rho > 0$, $\bar{m}^{\epsilon}_d(\rho) = \rho^{d-\epsilon}$, and $\kappa(\rho) = \frac{b_1'(1 + T + T^2)}{T}(\rho)$. Since $T(\rho) = \xi \rho$ for some constant $\xi > 0$, $\kappa(\rho) \underset{\rho \to \infty}{\sim} C \rho$ for some $C > 0$. Since $d \in \{2,3\}$ and $\epsilon \in (0,1)$, one can find $\rho_0$ such that, for all $\rho > \rho_0$, $\bar{m}^{\epsilon}_d (\rho) \ge \frac{12 \kappa(\rho)}{C_1}$. We fix $\rho > \rho_0$, $T = T(\rho) > T(\rho_0) =: T_0$ for the remaining part of the proof. Note that, since $T(\rho) = \xi \rho$ for some constant $\xi$, any choice of $T > T(\rho_0)$ is possible. We set $A := \frac{\bar{m}^{\epsilon}_d(\rho)}{4}$ and define, for all $\beta > 0$, the $\beta$-norm by
\[ \|f\|_{\beta} := \|f\|_{L^1} + \beta \|f\|_{m^{\epsilon}_{d+1}}.  \]
We distinguish two cases. Indeed, we have the alternative: 
\begin{subequations}
\begin{align}
\|f\|_{m^{\epsilon}_d} &\le A \|f\|_{L^1}, \label{eq:Step_2_1} \\
\hbox{ or } \|f\|_{m^{\epsilon}_d} &> A \|f\|_{L^1}.  \label{eq:Step_2_2}
\end{align}
\end{subequations}

\vspace{.3cm}

\textbf{Step 3.} We prove a convergence result in the $\beta$-norm in the case of the first alternative, \eqref{eq:Step_2_1}. Recall that for all $\Lambda > 0$, $D_\Lambda = \{(x,v) \in G, \langle x,v \rangle \le \Lambda\}$. Using $\langle f \rangle = 0$ and Theorem \ref{thm:Doeblin-Harris}, we have, for all $(x,v) \in G$, 
\[ \begin{aligned}
S_Tf_{\pm}(x,v) &\geq \nu(x,v) \int_{G} f_{\pm}(x',v') \,  \d v' \d x' - \nu(x,v) \int_{D_{\rho}^c} f_{\pm}(x',v') \, \d v' \d x' \\
& \geq \frac{\nu(x,v)}{2} \int_G |f(x',v')| \, \d v' \d x'  - \nu(x,v) \int_{D_{\rho}^c} |f(x',v')|\, \d v' \d x' \\
& \geq \frac{\nu(x,v)}{2} \int_G |f(x',v')| \, \d v' \d x'  - \frac{\nu(x,v)}{\bar{m}^{\epsilon}_d(\rho)} \int_{G}  |f(x',v')| m^{\epsilon}_{d}(x',v') \, \d v' \d x' \\
& \geq \frac{\nu(x,v)}{2} \int_G |f(x',v')| \, \d v' \d x'  - \frac{\nu(x,v)}{4}  \int_{G}  |f(x',v')| \, \d v' \d x' \\
&= \frac{\nu(x,v)}{4}  \int_{G}  |f(x',v')| \, \d v' \d x' =: \eta(x,v),
\end{aligned} \]
where the third inequality is given by the fact that $D_{\rho} = \{(x,v) \in G, m^{\epsilon}_d(x,v) \le \bar{m}^{\epsilon}_d(\rho)\}$ and that $m^{\epsilon}_d(x,v) \ge 1$ for all $(x,v) \in G$. The last inequality is obtained by condition (\ref{eq:Step_2_1}). The final equality stands for a definition of $\eta(x,v)$ for all $(x,v) \in G$. Note that $\eta \geq 0$ on $G$.
We deduce,
\[ \begin{aligned}
|S_T f| &= |S_T f_+ - \eta - (S_T f_- - \eta)| \\
		&\leq |S_T f_+ - \eta| + |S_T f_- - \eta| \\
		&= S_T f_+ + S_T f_- - 2 \eta = S_T|f| - 2 \eta,
\end{aligned} \]
and, integrating over $G$, we have, using the contraction property, that $\eta = \frac{\nu}{4} \|f\|_{L^1}$, and that $\nu$ is non-negative with $\langle \nu \rangle \le 1$,
\begin{equation}\begin{aligned}
\label{IneqContractionDoeblin}
\|S_T f \|_{L^1} \leq \|f\|_{L^1} - 2 \|\eta\|_{L^1} = \Big( 1- \frac{\langle \nu \rangle}{2} \Big) \|f\|_{L^1} = \tilde{\eta} \|f\|_{L^1},
\end{aligned}\end{equation}
with $\tilde{\eta} \in (0, 1)$. Hence, $S_T$ is a strict contraction in $L^1$ in the case where $f$ satisfies \eqref{eq:Step_2_1}.
We use this result along with \eqref{eq:Step_1_Ccl} and the definition of $\kappa(\rho)$ to derive an inequality on the $\beta$-norm of $S_T f$
\[ \begin{aligned}
\|S_T f\|_{\beta} &= \|S_T f\|_{L^1} + \beta \|S_T f\|_{m^{\epsilon}_{d+1}} \\
&\leq \tilde{\eta} \|f\|_{L^1} + \beta \big( - C_1 T \|S_T f\|_{m^{\epsilon}_d} + \|f\|_{m^{\epsilon}_{d+1}} + \kappa(\rho) T \|f\|_{L^1} \big) \\
&\leq \beta \|f\|_{m^{\epsilon}_{d+1}} + (\tilde{\eta}+ \kappa(\rho) T\beta ) \|f\|_{L^1} - \beta C_1 T \|S_T f\|_{m^{\epsilon}_d}.
\end{aligned} \]
Finally, we choose $0 < \beta \leq \frac{1 - \tilde{\eta}}{ \kappa(\rho) T}$ and deduce
\begin{equation}\begin{aligned}
\label{EqPreliminaryBeta}
\|S_T f\|_{\beta} + C_1 \beta T \|S_T f\|_{m^{\epsilon}_d} \leq \|f\|_{\beta}.
\end{aligned}\end{equation}

\vspace{.3cm}

\textbf{Step 4.} We prove that a slightly different version of (\ref{EqPreliminaryBeta}) also holds in the case (\ref{eq:Step_2_2}).
From (\ref{eq:Step_1_Ccl}), using (\ref{eq:Step_2_2}), we have, for $T$, $\kappa(\rho)$ fixed as above
\[ \begin{aligned}
\|S_T f\|_{m^{\epsilon}_{d+1}} + C_1 T \|S_T f\|_{m^{\epsilon}_{d}} &\leq \|f\|_{m^{\epsilon}_{d+1}} + \frac{\kappa(\rho) T}{A} \|f\|_{m^{\epsilon}_d}.
\end{aligned} \]
Since $A \geq \frac{3 \kappa(\rho)}{C_1}$, see Step 2, it follows that
\[ \begin{aligned}
\|S_T f\|_{m^{\epsilon}_{d+1}} + C_1 T \|S_T f\|_{m^{\epsilon}_d} &\leq  \|f\|_{m^{\epsilon}_{d+1}} + \frac{C_1T}{3} \|f\|_{m^{\epsilon}_d}.
\end{aligned} \]
Using this inequality and the $L^1$ contraction, we deduce
\begin{align}
\label{EqPreliminaryBetaCase2}
\|S_T f\|_{\beta} + C_1 \beta T \|S_T f\|_{m^{\epsilon}_d} &= \|S_T f\|_{L^1} + \beta \|S_T f\|_{m^{\epsilon}_{d+1}} + C_1 \beta T \|S_T f\|_{m^{\epsilon}_d}  \nonumber \\
&\leq \|f\|_{L^1} + \beta \|f\|_{m^{\epsilon}_{d+1}} + \beta \frac{C_1 T}{3} \|f\|_{m^{\epsilon}_{d}}  \nonumber \\
&= \|f\|_{\beta} + \beta C_1 \frac{T}{3} \|f\|_{m^{\epsilon}_d}.
\end{align}

\vspace{.3cm}

\noindent
\textbf{Step 5.} For $\beta$ as above and $\alpha = C_1 \beta T$, we have $\vertiii{.}_{m^{\epsilon}_{d+1}} = \|.\|_{\beta} + \alpha \|.\|_{m^{\epsilon}_d}$ by definition. Gathering (\ref{EqPreliminaryBeta}) and (\ref{EqPreliminaryBetaCase2}), we conclude that (\ref{eq:Contraction_Lemma}) holds and we deduce
\[ \begin{aligned}
\vertiii{S_T f}_{m^{\epsilon}_{d+1}} \leq \vertiii{ f}_{m^{\epsilon}_{d+1}}.
\end{aligned} \]

Since $m^{\epsilon}_{d+1} \geq m^{\epsilon}_{d} \geq 1$ on $G$, we conclude that for all $f \in L^1_{m^{\epsilon}_{d+1}}(G)$ with $\langle f \rangle = 0$,
\begin{equation}\begin{aligned}
\label{IneqM3Contraction}
\|S_T f\|_{m^{\epsilon}_{d+1}} \leq M^{\epsilon}_{d+1} \|f\|_{m^{\epsilon}_{d+1}},
\end{aligned}\end{equation}
for some constant $M^{\epsilon}_{d+1} \geq 1$.
\end{proof}

To derive interpolation results between spaces of the form $\{f \in L^1_w(G), \langle f \rangle = 0\} $ with $w \ge 1$ some weight on $G$, we will rely on \cite[Corollary 3]{Bernou_Transport_Semigroup_2020}, that we recall now. 

\begin{coroll}
\label{coroll:Interpol}
Let $\phi_1, \phi_2, \tilde{\phi}_1, \tilde{\phi}_2$ be four measurable functions on $G$ positive almost everywhere. Let also $A_1 = L^1_{\phi_1}(G)$, $A_2 = L^1_{\phi_2}(G)$, $\tilde{A}_1 = L^1_{\tilde{\phi}_1}(G)$, $\tilde{A}_2 = L^1_{\tilde{\phi}_2}(G)$. 
Let, for all $\gamma \in (0,1)$, $\phi_{\gamma}$ and $\tilde{\phi}_{\gamma}$ be defined by
\[ \phi_{\gamma} := \phi_1^{\gamma} \phi_2^{1-\gamma}, \qquad \tilde{\phi}_{\gamma} := \tilde{\phi}_1^{\gamma} \tilde{\phi}_2^{1-\gamma}, \]
respectively, and $A_{\gamma} = L^1_{\phi_{\gamma}}(G)$, $\tilde{A}_{\gamma} = L^1_{\tilde{\phi_{\gamma}}}(G)$. Assume that there exists a bounded projection $\Pi: (A_i,\tilde{A}_i) \to (A_i',\tilde{A}_i')$ for $i \in \{1,2\}$ with $A_i' \subset A_i$, $\tilde{A}_i' \subset \tilde{A}_i$. Let also $A'_{\gamma} = (A'_1 + A'_2) \cap A_{\gamma}$, $\tilde{A}'_{\gamma} = (\tilde{A}'_1 + \tilde{A}'_2) \cap \tilde{A}_{\gamma}$. Assume that $S$ is a linear operator from $A'_1$ to $\tilde{A}'_1$ and from $A'_2$ to $\tilde{A}'_2$ with
\[ \vertiii{S}_{A'_1 \to \tilde{A}'_1} \leq N_1, \qquad \vertiii{S}_{A_2' \to \tilde{A}_2'} \leq N_2, \]
for $N_1, N_2 > 0$. Then $S$ is a linear operator from $A'_{\gamma}$ to $\tilde{A}'_{\gamma}$ and there exists $C > 0$ depending only on $\Pi$ such that
\[ \vertiii{S}_{A'_{\gamma} \to \tilde{A}'_{\gamma}} \leq C N_1^{\gamma} N_2^{1- \gamma}. \]
\end{coroll}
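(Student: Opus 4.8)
The plan is to recognize Corollary~\ref{coroll:Interpol} as an instance of complex interpolation with change of weight, combined with a retract argument to descend from the full couples to the complemented subspaces $A_i'$ and $\tilde{A}_i'$. First I would treat the case $\Pi = \mathrm{Id}$, so that $A_i' = A_i$ and $\tilde{A}_i' = \tilde{A}_i$. Here the statement is exactly the Stein--Weiss interpolation theorem for $L^1$ spaces over the common measure space $(G, \d v \d x)$ with two mutually absolutely continuous weights. Concretely, Calderón's identity for the complex method gives, isometrically,
\[ [L^1_{\phi_1}(G), L^1_{\phi_2}(G)]_{1-\gamma} = L^1_{\phi_\gamma}(G), \qquad \phi_\gamma = \phi_1^{\gamma} \phi_2^{1-\gamma}, \]
and likewise for the tilde spaces. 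Since $[\cdot,\cdot]_{1-\gamma}$ is an exact interpolation functor of exponent $1-\gamma$, any operator bounded from $A_1$ to $\tilde{A}_1$ with norm $N_1$ and from $A_2$ to $\tilde{A}_2$ with norm $N_2$ is automatically bounded from $A_\gamma$ to $\tilde{A}_\gamma$ with norm at most $N_1^{\gamma} N_2^{1-\gamma}$. For a self-contained argument one may instead fix $f$ and a dual function $g \in L^\infty$, introduce the analytic families of weights $\phi_1^{1-z}\phi_2^{z}$ and $\tilde{\phi}_1^{1-z}\tilde{\phi}_2^{z}$ on the strip $0 \le \mathrm{Re}\, z \le 1$, and apply the three-lines lemma to the resulting bounded holomorphic function whose values on the two boundary lines are controlled by $N_1$ and $N_2$ and which returns the desired pairing at $z = 1-\gamma$.

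To incorporate the projection I would invoke the retract theorem for the complex method. By hypothesis $\Pi$ is bounded on each of $A_1, A_2$ and on each of $\tilde{A}_1, \tilde{A}_2$, hence on the whole couples; being a projection, it acts as the identity on its ranges $A_i'$ and $\tilde{A}_i'$, and therefore on $A_1' + A_2'$ and on $\tilde{A}_1' + \tilde{A}_2'$. Consequently each $A_i'$ is a retract of $A_i$ via the pair (inclusion, $\Pi$), and this property passes to the interpolation scale, yielding $[A_1', A_2']_{1-\gamma} = \Pi(A_\gamma)$ with a norm equivalent to the $A_\gamma$-norm up to a constant governed only by the boundedness constants of $\Pi$. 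Since $\Pi$ fixes $A_1'+A_2'$ and maps $A_\gamma \subset A_1 + A_2$ into $A_1' + A_2'$, one checks the two inclusions showing $\Pi(A_\gamma) = (A_1'+A_2') \cap A_\gamma = A_\gamma'$, and symmetrically $[\tilde{A}_1', \tilde{A}_2']_{1-\gamma} = \tilde{A}_\gamma'$.

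With these identifications, I would apply the endpoint interpolation to the restriction of $S$: from $\vertiii{S}_{A_1' \to \tilde{A}_1'} \le N_1$ and $\vertiii{S}_{A_2' \to \tilde{A}_2'} \le N_2$ the complex method gives $\vertiii{S}_{[A_1',A_2']_{1-\gamma} \to [\tilde{A}_1', \tilde{A}_2']_{1-\gamma}} \le N_1^{\gamma} N_2^{1-\gamma}$, and transporting the interpolation norms back to the intrinsic weighted norms on $A_\gamma'$ and $\tilde{A}_\gamma'$ costs only the constant $C$ coming from the retract equivalence, giving $\vertiii{S}_{A_\gamma' \to \tilde{A}_\gamma'} \le C N_1^{\gamma} N_2^{1-\gamma}$ with $C$ depending only on $\Pi$. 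The main obstacle is not any single hard inequality but the bookkeeping of this second step: one must confirm that Calderón's description of $[L^1_{w_0}, L^1_{w_1}]_\theta$ remains valid for arbitrary almost-everywhere-positive weights over the infinite-measure space $G$, and, above all, verify carefully that the retract identification of the interpolated subspaces introduces a constant depending on $\Pi$ alone, uniformly in $\gamma$ and independent of $S$.
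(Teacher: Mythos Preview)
The paper does not prove this corollary: it is recalled verbatim from \cite[Corollary 3]{Bernou_Transport_Semigroup_2020}, so there is no in-paper proof to compare against. Your proposal is a correct and standard route to the result, and is in all likelihood close to the argument in the cited reference: the Stein--Weiss identification $[L^1_{\phi_1},L^1_{\phi_2}]_{1-\gamma}=L^1_{\phi_\gamma}$ handles the unprojected case with constant $1$, and the retract theorem for complemented subspaces transfers this to the ranges of $\Pi$, producing the sole constant $C$ from the norms of $\Pi$ on the four endpoint spaces. Your verification that $\Pi(A_\gamma)=(A_1'+A_2')\cap A_\gamma$ is the one nontrivial bookkeeping point and you have outlined it correctly, using that $A_\gamma=[A_1,A_2]_{1-\gamma}\subset A_1+A_2$ so that $\Pi$ sends $A_\gamma$ into $A_1'+A_2'$ while being bounded on $A_\gamma$ by interpolation.
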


\subsection{Proof of Theorem \ref{thm:Main}}

In this subsection, we proceed to the proof of Theorem \ref{thm:Main}. 

For $\epsilon \in (0,\frac12)$ fixed, we consider the weights $w_1(x,v) = \langle x,v \rangle^{1+\epsilon}$ and $w_0(x,v) = \langle x, v \rangle^{\epsilon}$ for all $(x,v) \in \bar{G}$. We want to prove a decay rate for $S_t(f-g)$ with $f,g \in L^1_{m_{d+1}^{\epsilon}}$, $\langle f \rangle = \langle g \rangle$. We assume without loss of generality that $g \equiv 0$ so that $f \in L^1_{m^{\epsilon}_{d+1}}(G)$ with $\langle f \rangle = 0$.

\vspace{.3cm}

\textbf{Step 1.} Set $L^1_0(G) = \{g \in L^1(G), \langle g \rangle = 0\}$ and $L^1_{w,0}(G) = \{g \in L^1_w(G), \langle g \rangle = 0 \}$ for any weight $w$ on $\bar{G}$. We introduce the notation 
\[ M_1(v) := \frac{e^{-\frac{|v|^2}{2}}}{(2 \pi)^{\frac{d}{2}}}. \]
Note that $\int_{\RR^d} |v|^2 \, M_1(v) \, \d v = 1$.
We consider $\Pi: L^1(G) \to L^1_0(G)$ the bounded projection such that, for all $h \in L^1(G)$, $(x,v) \in G$, 
\[ \Pi h(x,v) = h(x,v) - \frac{M_1(v)|v|^2}{|\Omega|} \int_G h(y,w) \, \d y \d w, \]
where $|\Omega|$ denotes the volume of $\Omega$. By use of hyperspherical coordinates, it is straightforward to check that $\Pi h \in L^1_{m^{\epsilon}_{d+1}}(G)$ for all $h \in L^1_{m^{\epsilon}_{d+1}}(G)$. Also, there exists a constant $C_\Pi > 0$ such that $\|\Pi h \|_{m^{\epsilon}_{d+1}} \le C_\Pi \|h\|_{m^{\epsilon}_{d+1}}$ for all $h \in L^1_{m^{\epsilon}_{d+1}}(G)$ and $\|\Pi h \|_{L^1} \le C_\Pi \|h\|_{L^1}$. Since $\langle h \rangle = 0$ implies $\Pi h = h$, and $\langle \Pi h \rangle = 0$ for all $h \in L^1(G)$, $\Pi$ is a bounded projection as claimed. 
Let $T > T_0$ with $T_0$ given by Lemma \ref{lemma:contraction_norm}. From Theorem \ref{thm:mass_&_L1_contraction}, we have
\begin{align*}
\vertiii{S_T}_{L^1_0(G) \to L^1_0(G)} \le 1, 
\end{align*}
and from Lemma \ref{lemma:contraction_norm}, 
\[ \vertiii{S_T}_{L^1_{m^{\epsilon}_{d+1},0}(G) \to L^1_{m^{\epsilon}_{d+1},0}(G)} \le M^{\epsilon}_{d+1}. \]
We apply Corollary \ref{coroll:Interpol} with the projection $\Pi$ and the values:
\begin{enumerate}[1.]
\item $A_1 = \tilde{A}_1 = L^1(G)$, and, using the definition of $\Pi$, $A_1' = \tilde{A}_1' = L^1_0(G)$,
\item $A_2 = \tilde{A}_2 = L^1_{m^{\epsilon}_{d+1}}(G)$, and, using the definition of $\Pi$, $A'_2 = \tilde{A}_2' = L^1_{m^{\epsilon}_{d+1},0}(G)$,
\item $\gamma = 1 - \frac{\epsilon}{d + 1 - \epsilon} \in (0,1)$, so that $A_{\gamma} = \tilde{A}_{\gamma} = L^1_{w_0}(G)$, and, using the definition of $\Pi$, $\tilde{A}_{\gamma}' = A_{\gamma}' = (A_1' + A_2') \cap A_{\gamma} = L^1_{w_0,0}(G)$. 
\end{enumerate}
We conclude that there exists $C_0 > 0$ such that
\[ \|S_T f\|_{w_0} \le C_0 \|f\|_{w_0}. \]
Since $(S_t)_{t \ge 0}$ is a strongly continuous semigroup of operators on $L^1_{w_0}(G)$, this implies, using the growth bound of the semigroup, that there exists $W_0 \ge 1$ such that for all $t \in (0,T)$, for all $f \in L^1_{w_0,0}(G)$, 
\begin{align}
\label{eq:control_St_w0}
\|S_T f\|_{w_0} = \|S_{T-t} S_t f \|_{w_0} \le W_0 \|S_t f\|_{w_0}. 
\end{align}

\vspace{.3cm}
\textbf{Step 2.} Using Proposition \ref{prop:ineq_lyapunov} and \eqref{eq:control_St_w0}, for some constants $C, W_1 > 0$, we have
\[ \|S_T f\|_{w_1} + \frac{T}{W_1} \|S_T f\|_{w_0} \le \|f\|_{w_1} + C(1+T) \|f\|_{L^1}, \]
which rewrites 
\[ \|S_T f\|_{w_1} + \frac{T}{W_1} \|S_T f\|_{w_0} \le \|f\|_{w_1} + \kappa(\rho) T \|f\|_{L^1}, \]
with, for all $\rho > 0$, $\kappa(\rho) = \frac{C(1 + T(\rho))}{T(\rho)}$, so that $\kappa \le C_{1,1}$ for some constant $C_{1,1} > 0$ independent of $\rho$. Set $w_0(r) = r^{\epsilon}$ for $r \ge 1$. Since $\frac{w_0(\rho)}{\kappa(\rho)} \to \infty$ when $\rho \to \infty$, one can replicate the arguments of Steps 2 to 4 of the proof of Lemma \ref{lemma:contraction_norm}. We obtain, for some $\tilde{T}_0 > T_0$, for all $T \ge \tilde{T}_0$,
\begin{align}
\label{eq:control_St_beta_w1}
\|S_T f\|_{\beta} + 3 \alpha \|S_T f\|_{w_0} \le \|f\|_{\beta} + \alpha \|f\|_{w_0},
\end{align}
just as \eqref{EqPreliminaryBeta} and \eqref{EqPreliminaryBetaCase2}, with $\beta > 0$ constant, $\alpha = \frac{\beta T}{3 W_1}$, and 
\begin{align}
\label{eq:def_beta_norm_w1}
\|f\|_{\beta} := \|f\|_{L^1} + \beta \|f\|_{w_1}. 
\end{align}

\vspace{.3cm}
\textbf{Step 3.} We have, from our definition of $w_0, w_1$ and of $m_{d+1}^{\epsilon}$, for $(x,v) \in G$, 
\begin{align*}
w_1(x,v) &= \langle x,v \rangle^{1+\epsilon} \\
&= \langle x,v \rangle^{1+\epsilon} \mathbf{1}_{\{\langle x,v \rangle < \lambda \}} + \langle x,v \rangle^{1+\epsilon} \mathbf{1}_{\{\langle x,v \rangle \ge \lambda \}} \\
& \le w_0(x,v) \lambda + \frac{\langle x,v \rangle^{d+1-\epsilon}}{\langle x,v \rangle^{d - 2 \epsilon}} \mathbf{1}_{\{ \langle x,v \rangle \ge \lambda \}} \\
&\le w_0(x,v) \lambda + \eta_{\lambda} m_{d+1}^{\epsilon},
\end{align*}
for $\lambda > 0$ large enough, with $\eta_{\lambda} = \frac{1}{\lambda^{d-2\epsilon}} \to 0$ as $\lambda \to \infty$. We deduce, since $w_1(x,v) \ge 1$ for all $(x,v) \in G$, 
\begin{align}
\label{eq:control_beta_lambda}
\frac{1}{\lambda(1+\beta)} \|S_T f\|_{\beta}  = \frac{1}{\lambda(1+\beta)} \big( \|S_T f\|_{L^1} + \beta \|S_T f\|_{w_1} \big) &\le \frac{1}{\lambda} \|S_T f\|_{w_1} \\
&\le \|S_T f\|_{w_0} + \frac{\eta_{\lambda}}{\lambda} \|S_T f\|_{m_{d+1}^{\epsilon}}.  \nonumber
\end{align}

Moreover, consider the norm $\vertiii{.}_{m^{\epsilon}_{d+1}}$ from Lemma \ref{lemma:contraction_norm}, and denote $\tilde{\beta}, \tilde{\alpha}$ the two positive constants used to define this norm. Setting $B := \frac{\alpha}{\tilde{\beta}}$, we have
\begin{align}
\label{eq:control_eta_lambda_vertiii}
\frac{\alpha \eta_{\lambda}}{\lambda}\|S_T f\|_{m^{\epsilon}_{d+1}} = \frac{\alpha}{\tilde{\beta}} \frac{\eta_{\lambda}}{\lambda} \tilde{\beta} \|S_T f\|_{m^{\epsilon}_{d+1}} \le B \frac{\eta_{\lambda}}{\lambda} \vertiii{S_T f}_{m^{\epsilon}_{d+1}},
\end{align}
with the definition given in Lemma \ref{lemma:contraction_norm} for $\vertiii{\cdot}_{m^{\epsilon}_{d+1}}$. Let $\gamma := \frac{\alpha}{1+\beta}$, $Z := 1 +\frac{\gamma}{\lambda}$, with $\lambda \ge \lambda_0 \ge 1$ and $\lambda_0$ large enough so that $Z \le 2$. We have
\begin{align*}
Z \big( \|S_T f\|_{\beta} + \alpha \|S_T f\|_{w_0} \big) &\le \|S_T f\|_{\beta} + \frac{\alpha}{\lambda(1+\beta)} \|S_T f\|_{\beta} + Z \alpha \|S_T f\|_{w_0} \\
&\le \|S_T f\|_{\beta} + \alpha \|S_T f\|_{w_0} + \frac{\alpha \eta_{\lambda}}{\lambda} \|S_T f\|_{m^{\epsilon}_{d+1}} + Z \alpha \|S_T f\|_{w_0} \\
&\le \|S_T f\|_{\beta} + 3 \alpha \|S_T f\|_{w_0} + \frac{B \eta_{\lambda}}{\lambda} \vertiii{S_T f}_{m^{\epsilon}_{d+1}} \\
&\le \|f\|_{\beta} + \alpha \|f\|_{w_0} + \frac{B \eta_{\lambda}}{\lambda} \vertiii{S_T f}_{m^{\epsilon}_{d+1}},
\end{align*}
where we used \eqref{eq:control_beta_lambda}, \eqref{eq:control_eta_lambda_vertiii} and  \eqref{eq:control_St_beta_w1}, in that order. We now introduce the norm $\vertiii{\cdot}_{w_1}$ defined, for all $h \in L^1_{w_1}(G)$, by
\[ \vertiii{h}_{w_1} := \|h\|_{\beta} + \alpha \|h\|_{w_0}, \]
so that the previous inequality rewrites 
\begin{align}
\label{eq:ineq_Z}
 Z \vertiii{S_T f}_{w_1} \le \vertiii{f}_{w_1} + \frac{B \eta_{\lambda}}{\lambda} \vertiii{S_T f}_{m^{\epsilon}_{d+1}}. 
 \end{align}

\vspace{.3cm}

\textbf{Step 4.} We set $u_0 = \vertiii{f}_{w_1}$, and, for $k \ge 1$, $u_k = \vertiii{S_{kT} f}_{w_1}$. Let $v_0 = \vertiii{f}_{m^{\epsilon}_{d+1}}$, and, for $k \ge 1$, $v_k = \vertiii{S_{kT} f}_{m^{\epsilon}_{d+1}}$. According to Lemma \ref{lemma:contraction_norm}, $v_k \le v_0$ for all $k \ge 1$. We set $Y = \frac{B \eta_{\lambda}}{\lambda}$. The final inequality \eqref{eq:ineq_Z} of Step 3 rewrites
\[ Z u_1 \le u_0 + Y v_1. \]
We iterate this inequality to obtain
\begin{align*}
Z^k u_k \le u_0 + Y \sum_{i=1}^k Z^{i-1} v_i,
\end{align*}
and we conclude that
\[ u_k \le Z^{-k}u_0 + Y \frac{Z}{Z-1} \sup_{i \ge 1} v_i \le Z^{-k} u_0 + \frac{YZ}{Z-1} v_0. \]
From this we deduce, recalling the definition of the $\beta$-norm \eqref{eq:def_beta_norm_w1}, that $1 < Z \le 2$ and that $w_1 \le m^{\epsilon}_{d+1}$, 
\begin{align*}
\vertiii{S_{kT} f}_{w_1} &\le \frac{1}{(1+ \gamma/\lambda)^k} (1 + \beta + \alpha) \|f\|_{w_1} + \eta_{\lambda} \frac{2B}{\gamma} \vertiii{f}_{m^{\epsilon}_{d+1}} \\
&\le C_{1,2} \Big( e^{-\frac{kT}{\lambda} \frac{\gamma}{2T}} + \eta_{\lambda} \Big) \|f\|_{m^{\epsilon}_{d+1}},
\end{align*}
with $C_{1,2} > 0$ explicit, where we used that $\vertiii{\cdot}_{m^{\epsilon}_{d+1}} \lesssim \|\cdot\|_{m^{\epsilon}_{d+1}}$. We set $T_1 = kT$ and choose 
\[ \lambda = \Big( \frac{T_1 \frac{\gamma}{2T}}{\ln(T_1^d)} \Big), \]
with $k \ge k_0 \ge 1$, $k_0$ large enough so that $\lambda > \lambda_0$ and $T_1 > e^1$ to obtain
\begin{align*}
\vertiii{S_T f}_{w_1} &\le C_{1,3}(d) \Big( \frac{1}{T_1^d} + \frac{\ln(T_1)^{d - 2\epsilon}}{T_1^{d-2\epsilon}} \Big) \|f\|_{m^{\epsilon}_{d+1}} \\
&\le \frac{ C_{1,3}(d)}{T_1^{d-3\epsilon}}\|f\|_{m^{\epsilon}_{d+1}}
\end{align*}
where $C_{1,3}(d) > 0$ is a constant depending only on $d$, and where we used that $\ln(T_1)^{d-2\epsilon} \le T_1^{\epsilon}$ for $k_0$ large enough. Upon modifying the value of $C_{1,3}(d)$ so that the previous inequality also holds for $k \in \{0,\dots,k_0 - 1\}$, we can rewrite this as
\begin{align}
\label{eq:conclusion_Step_4}
\vertiii{S_{kT} f}_{w_1} \le C_{1,3}(d) \Theta(k) \|f\|_{m^{\epsilon}_{d+1}},
\end{align}
with $\Theta(k) = \frac{1}{(kT)^{d-3\epsilon}}$ for all $k \ge 1$.

\vspace{.3cm}

\textbf{Step 5.} With the norm $\vertiii{\cdot}_{w_1}$, \eqref{eq:control_St_beta_w1} rewrites
\[ \vertiii{S_T f}_{w_1} + 2 \alpha \|S_T f\|_{w_0} \le \vertiii{f}_{w_1}. \]
By iterating this inequality and summing, we obtain, for $l \ge 1$, writing $[x]$ for the floor of $x \in \RR$, 
\begin{align}
\label{eq:intermediate_Step_5}
0 \le \vertiii{S_{lT} f}_{w_1} + 2 \alpha \sum_{k = [ \frac{l}{2} ] + 1}^l \|S_{kT} f\|_{w_0} \le \vertiii{S_{[\frac{l}{2}] T} f }_{w_1}.
\end{align}

Note that, for any $1 \le k \le l$, 
\[ \|S_{lT} f\|_{L^1} \le \|S_{kT} f\|_{L^1} \le \|S_{kT} f\|_{w_0}. \]
Hence, using \eqref{eq:conclusion_Step_4} and \eqref{eq:intermediate_Step_5}, 
\[ \min(1,2\alpha) \Big( l - [\frac{l}{2}] + 1\Big) \|S_{lT} f\|_{L^1} \le C_{1,3}(d) \Theta \Big([\frac{l}{2}] \Big) \|f\|_{m^{\epsilon}_{d+1}}, \]
so that, allowing the value of $C_{1,3}(d)$ to change slightly,
\[ \|S_{lT} f\|_{L^1} \le C_{1,3}(d) \frac{1}{(lT)^{d+1 - 3\epsilon}} \|f\|_{m^{\epsilon}_{d+1}}. \]
We conclude to the desired rate by choosing $\epsilon' = 3 \epsilon > 0$ and using standard semigroup properties.

\subsection{Proof of Theorem \ref{thm:equilibrium}} We use the previous Theorem \ref{thm:Main} to prove Theorem \ref{thm:equilibrium}. We obtain a first result by interpolation:
\begin{lemma}
\label{lemma:interpolation_final}
For all $\epsilon \in (0,\frac14)$, there exists an explicit constant $C_{\epsilon}$ such that for all $t \ge 0$, for all $f,g \in L^1_{m^{\epsilon}_{d}}(G)$ with $\langle f \rangle = \langle g \rangle$, there holds
\[ \|S_t(f-g)\|_{L^1} \le C_{\epsilon} \frac{1}{(1+t)^{d-\epsilon}} \|f-g\|_{m^{\epsilon}_{d}}. \]
\end{lemma}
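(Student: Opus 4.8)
The plan is to deduce this estimate by interpolating, at each fixed time $t$, between two endpoint bounds for the operator $S_t$, using the abstract interpolation result of Corollary~\ref{coroll:Interpol}. Writing $h = f - g$, the hypothesis $\langle f \rangle = \langle g \rangle$ gives $\langle h \rangle = 0$, and by linearity it suffices to prove $\|S_t h\|_{L^1} \le C_\epsilon (1+t)^{-(d-\epsilon)} \|h\|_{m^\epsilon_d}$ for every $h$ in the zero-mean subspace $L^1_{m^\epsilon_d,0}(G)$, where $m^\epsilon_d = \langle x,v\rangle^{d-\epsilon} = m_{d-\epsilon}$.

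First I would record the two endpoints, both for the same fixed $t \ge 0$. The cheap endpoint is the $L^1$ contraction of Theorem~\ref{thm:mass_&_L1_contraction}, which together with mass conservation shows that $S_t$ maps $L^1_0(G)$ into itself with $\vertiii{S_t}_{L^1_0 \to L^1_0} \le 1$. The expensive endpoint is Theorem~\ref{thm:Main} applied with parameter $2\epsilon$, which is admissible precisely because $\epsilon \in (0,\tfrac14)$ forces $2\epsilon \in (0,\tfrac12)$; this yields $S_t : L^1_{m_{d+1-2\epsilon},0}(G) \to L^1_0(G)$ with $\vertiii{S_t}_{L^1_{m_{d+1-2\epsilon},0} \to L^1_0} \le C(1+t)^{-(d+1-2\epsilon)}$. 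The restriction $\epsilon < \tfrac14$ in the statement is exactly what keeps this expensive endpoint inside the range of validity of Theorem~\ref{thm:Main}.

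Then I would apply Corollary~\ref{coroll:Interpol} with the bounded projection $\Pi$ already constructed in Step 1 of the proof of Theorem~\ref{thm:Main}, namely $\Pi h(x,v) = h(x,v) - |\Omega|^{-1} M_1(v) |v|^2 \langle h \rangle$, which is bounded on $L^1(G)$ and on $L^1_{m_{d+1-2\epsilon}}(G)$ with range the corresponding zero-mean subspaces. Concretely I take $A_1 = \tilde A_1 = L^1(G)$ with $A_1' = \tilde A_1' = L^1_0(G)$, and $A_2 = \tilde A_2 = L^1_{m_{d+1-2\epsilon}}(G)$ with $A_2' = \tilde A_2' = L^1_{m_{d+1-2\epsilon},0}(G)$; since both target weights equal $1$, the interpolated target is again $L^1_0(G)$. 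The interpolation parameter is chosen so that the geometric-mean weight matches the desired one: with $\gamma = \frac{1-\epsilon}{d+1-2\epsilon} \in (0,1)$ (valid for $d\in\{2,3\}$ since $d+1-2\epsilon > 1-\epsilon > 0$) one has $\phi_\gamma = \langle x,v\rangle^{(1-\gamma)(d+1-2\epsilon)} = \langle x,v\rangle^{d-\epsilon} = m^\epsilon_d$, hence $A_\gamma' = (A_1' + A_2')\cap A_\gamma = L^1_{m^\epsilon_d,0}(G)$. Corollary~\ref{coroll:Interpol} then gives, with $N_1 = 1$ and $N_2 = C(1+t)^{-(d+1-2\epsilon)}$,
\[ \vertiii{S_t}_{L^1_{m^\epsilon_d,0} \to L^1_0} \le C'\, N_1^{\gamma} N_2^{1-\gamma} = C''(1+t)^{-(1-\gamma)(d+1-2\epsilon)} = C''(1+t)^{-(d-\epsilon)}, \]
which is exactly the claimed bound applied to $h = f-g$; the constant $C''$ is explicit since the interpolation constant, the norm of $\Pi$, and $\gamma$ all are.

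I do not expect a genuine obstacle: the only real choice is the matching identity $(1-\gamma)(d+1-2\epsilon) = d-\epsilon$, and the remainder is verifying the hypotheses of Corollary~\ref{coroll:Interpol}. The mildly delicate points are purely bookkeeping — confirming that $\Pi$ restricts to a bounded projection on the weighted space (which follows from the Gaussian decay of $M_1$ making $M_1(v)|v|^2\langle x,v\rangle^{d+1-2\epsilon}$ integrable in $v$, exactly as in the proof of Theorem~\ref{thm:Main}) and confirming the identification $A_\gamma' = L^1_{m^\epsilon_d,0}(G)$ of the interpolated spaces.
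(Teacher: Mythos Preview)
Your proposal is correct and follows essentially the same route as the paper: interpolate via Corollary~\ref{coroll:Interpol} between the $L^1$ contraction and the decay from Theorem~\ref{thm:Main}, using the same projection $\Pi$. The only difference is cosmetic: the paper leaves the heavy endpoint parameter $\epsilon_0 \in (0,\tfrac12)$ free, while you make the specific (and perfectly valid) choice $\epsilon_0 = 2\epsilon$; note also a small slip in your setup, where you write $\tilde A_2 = L^1_{m_{d+1-2\epsilon}}(G)$ but then (correctly) say ``both target weights equal $1$'' --- you mean $\tilde A_2 = L^1(G)$, matching the paper.
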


\begin{proof}
We set $\tilde{f} := f - g$ so that $\langle \tilde{f} \rangle = 0$ and $\tilde{f} \in L^1_{m^{\epsilon}_{d},0}(G)$. From Theorem \ref{thm:mass_&_L1_contraction}, we have, for all $t \ge 0$,
\[ \vertiii{S_t}_{L^1_0(G) \to L^1_0(G)} \le 1, \]
and from Theorem \ref{thm:Main}, for all $\epsilon_0 \in (0,\frac14)$,
\[ \vertiii{S_t}_{L^1_{m^{\epsilon_0}_{d+1},0}(G) \to L^1_{0}(G)} \le C \frac{1}{t^{d+1-\epsilon_0}} = C \tilde{\Theta}(t), \]
the last equality standing for a definition of $\tilde{\Theta}(t)$, with $C > 0$ independent of $t$. We introduce, as before, the projection $\Pi : L^1(G) \to L^1_0(G)$, given, for $h \in L^1(G)$, by
\[ \Pi h(x,v) = h(x,v) - \frac{M_1(v)|v|^2}{|\Omega|} \int_G h(y,w) \, \d w \d y, \quad (x,v) \in G. \]
Note that, if $h \in L^1_{m^{\epsilon_0}_{d+1}}(G)$, $\Pi h \in L^1_{m^{\epsilon_0}_{d+1},0}(G)$ as one can check using hyperspherical coordinates, and that $\langle \Pi h \rangle = 0$. Moreover, $\Pi $ sends $L^1_r(G)$ to $L^1_{r,0}(G)$ for any weight $1 \le r \le m^{\epsilon_0}_{d+1}$, and is bounded. We apply Corollary \ref{coroll:Interpol} with the projection $\Pi $ and
\begin{enumerate}
\item $A_1 = \tilde{A}_1 = \tilde{A}_2 = L^1(G)$, 
\item $A_2 = L^1_{m^{\epsilon_0}_{d+1}}(G)$,
\item $A_1' = \tilde{A}_1' = \tilde{A}_2' = L^1_0(G)$, $A_2' = L^1_{m^{\epsilon_0}_{d+1},0}(G)$,
\item $\gamma = 1 - \frac{d - \epsilon}{d+1-\epsilon_0}$ so that $A_{\gamma} = L^1_{m^{\epsilon}_{d}}(G)$, $\tilde{A}_{\gamma} = L^1(G)$, 
\item $A'_{\gamma} = (A'_1 + A'_2) \cap A_{\gamma} = L^1_{m^{\epsilon}_{d},0}(G)$ and $\tilde{A}_{\gamma}' = (\tilde{A}_1' + \tilde{A}_2') \cap \tilde{A}_{\gamma} = L^1_0(G)$.
\end{enumerate}
We deduce that for some explicit constant $C_{\epsilon} > 0$, for all $t > 0$, 
\[ \vertiii{S_t}_{L^1_{m^{\epsilon}_{d},0}(G) \to L^1_0(G)} = C_{\epsilon} \tilde{\Theta}(t)^{\frac{d - \epsilon}{d+1-\epsilon}} \le C_{\epsilon} \frac{1}{(1+t)^{d-\epsilon}}. \]

\vspace{-.5cm}

\end{proof}

\begin{proof}[Proof of Theorem \ref{thm:equilibrium}]
\textbf{Step 1: Uniqueness.} Assume that there exists two functions $f_{\infty}, g_{\infty}$ in $L^1_{m^{\epsilon}_d}(G)$ with the desired properties. Applying Lemma \ref{lemma:interpolation_final}, we have, for some $C > 0$ and all $t \ge 0$,
\begin{align*}
\|S_t(f_{\infty} - g_{\infty})\|_{L^1} \le C \frac{1}{(t+1)^{d-\epsilon}} \|f_{\infty}-g_{\infty}\|_{m^{\epsilon}_d}.
\end{align*}
For all $t \ge 0$, we have $S_t f_{\infty} = f_{\infty}$ and $S_t g_{\infty} = g_{\infty}$. Set $\Theta(t) = \frac{C}{(1+t)^{d-\epsilon}}$. We deduce that, for all $t \ge 0$,
\[ \|f_{\infty} - g_{\infty}\|_{L^1} \le \Theta(t) \|f_{\infty} - g_{\infty}\|_{m^{\epsilon}_d}. \]
We conclude that $f_{\infty} = g_{\infty}$ a.e. on $G$ since $\Theta(t) \to 0$ as $t \to \infty$. 

\vspace{.3cm}

\textbf{Step 2: Existence.} Let $g \in L^1_{m^{\epsilon}_{d+1}}(G)$ with $g \ge 0$ and $\langle g \rangle = 1$. We apply Lemma \ref{lemma:contraction_norm} and fix $T > T_0$ with $T_0$ given by the lemma. We set, for all $k \ge 1$, 
\[ g_k := S_{Tk} g, \qquad \hbox{and } f_k := g_{k+1} - g_k. \]
By mass conservation, for all $k \ge 1$, $\langle g_k \rangle = 1$ so that $\langle f_k \rangle = 0$ and $f_k \in L^1_{m^{\epsilon}_{d+1},0}(G)$. Applying \eqref{eq:Contraction_Lemma}, for two constants $\beta, \alpha > 0$, setting $\|\cdot\|_{\beta} = \|\cdot\|_{L^1} + \beta \|\cdot\|_{m^{\epsilon}_{d+1}}$, for all $k \ge 1$, we have
\[ \|S_{T} f_k\|_{\beta} + \alpha \|S_T f_k\|_{m^{\epsilon}_{d}} \le \|f_k\|_{\beta} + \frac{\alpha}{3} \|f_k\|_{m^{\epsilon}_{d}}. \]
We introduce the modified norm $\vertiii{\cdot}_{\tilde{\alpha}}$ defined by $\vertiii{\cdot}_{\tilde{\alpha}} = \|\cdot\|_{\beta} + \frac{\alpha}{3} \|\cdot\|_{m^{\epsilon}_{d}}$, so that the previous inequality reads 
\begin{align}
\label{eq:ineq_norm_tilde_alpha}
\vertiii{S_T f_k}_{\tilde{\alpha}} + \frac{2 \alpha}{3} \|S_T f_k\|_{m^{\epsilon}_{d}} \le \vertiii{f_k}_{\tilde{\alpha}}. 
\end{align}
This implies that, for all $k \ge 1$,
\[ \vertiii{f_{k+1}}_{\tilde{\alpha}} \le \vertiii{f_k}_{\tilde{\alpha}}, \]
hence, $(\vertiii{f_k}_{\tilde{\alpha}})_{k \ge 1}$ is non-negative, non-increasing, and is therefore a converging sequence. We fix $\delta > 0$. The previous observation implies that for $N \ge 0$ large enough and $p > l \ge N$,
\[ 0 \le \vertiii{f_l}_{\tilde{\alpha}} - \vertiii{f_p}_{\tilde{\alpha}} \le \frac{2 \alpha}{3} \delta. \]
For such $N, l$ and $p$, we thus have, using \eqref{eq:ineq_norm_tilde_alpha}
\begin{align*}
\frac{2 \alpha}{3} \big\| g_{p+1} - g_{l+1} \big\|_{m^{\epsilon}_{d}} &= \frac{2 \alpha}{3} \Big\| \sum_{k = l+1}^p f_k \Big\|_{m^{\epsilon}_{d}} \\
&\le \sum_{k=l}^{p-1} \frac{2 \alpha}{3} \big\| S_T f_k\|_{m^{\epsilon}_{d}} \\
&\le \sum_{k=l}^{p-1} \Big( \frac{2 \alpha}{3} \|S_T f_k\|_{m^{\epsilon}_{d}} + \vertiii{S_T f_k}_{\tilde{\alpha}} \Big) - \sum_{k = l}^{p-1} \vertiii{S_T f_k}_{\tilde{\alpha}} \\
&\le \sum_{k=l}^{p-1} \vertiii{f_k}_{\tilde{\alpha}} - \sum_{k=l}^{p-1} \vertiii{S_T f_k}_{\tilde{\alpha}} \\
&= \vertiii{f_l}_{\tilde{\alpha}} - \vertiii{f_p}_{\tilde{\alpha}} \le \frac{2 \alpha}{3} \delta, 
\end{align*}
by choice of $l,p$. We deduce that the sequence $(g_k)_{k \ge 1}$ is a Cauchy sequence in the Banach space $L^1_{m^{\epsilon}_{d}}(G)$, and thus converges towards a limit $f_{\infty} \in L^1_{m^{\epsilon}_{d}}(G)$ with $\langle f_{\infty} \rangle = \langle g \rangle$ by mass conservation. A similar argument to the one in Step 1 can be used to prove that this limit is independent of the choice $g \in L^1_{m^{\epsilon}_{d+1}}(G)$ with $\langle g \rangle = 1$. 
\end{proof}

\begin{proof}[Proof of Corollary \ref{coroll:cvg-to-eq}] The result follows simply by applying Lemma \ref{lemma:interpolation_final} with $g = f_{\infty}$ given by Theorem \ref{thm:equilibrium}. 
\end{proof}

\section{Steady state and velocity flow in a mixed Cercignani-Lampis model}
\label{Section:steady-flow}

In this section, we use a simplified, two-dimensional model to provide two new observations associated with two key questions regarding the free-transport problem with Cercignani-Lampis boundary condition:
\begin{enumerate}
\item the form of the associated steady state,
\item the velocity flow induced by this steady state.
\end{enumerate}
It is worth drawing a comparison here with the diffuse case, that is when $\rp \equiv 1$ and $\rt \equiv 1$. In the latter situation, there exists an explicit form for the steady state, and, quite surprisingly, it induces no velocity flow. We refer to Sone \cite[Chapter 2]{Sone_Molecular_Gas_2007} for the derivation of the explicit form, from which the absence of a velocity flow follows immediatly. 
On the other hand, the numerical study of Kosuge et al. \cite{Kosuge_steady_2011} shows that when one considers the general Cercignani-Lampis boundary condition, there is a non-zero velocity flow. In this section we consider a mixed model, that we describe now.
Let $\Omega = [0,1] \times [0,1]$, we consider the kinetic free-transport equation in $\Omega$:
\begin{align*}
\partial_t f(t,x,v) + v \cdot \nabla_x f(t,x,v) \qquad (t,x,v) \in \RR_+ \times \Omega \times \RR^2,
\end{align*}
along with a mixed boundary condition, of periodic type in the first spatial coordinate, which, for $t \ge 0$, $x_2 \in [0,1]$, $v_2 \in \RR$, takes the form
\begin{align*}
f(t,(0,x_2),(v_1,v_2)) &= f(t,(1,x_2),(v_1,v_2)), \qquad \hbox{ if } v_1 < 0, \\
f(t,(1,x_2),(v_1,v_2)) &= f(t,(0,x_2),(v_1,v_2)), \qquad \hbox{ if } v_1 > 0
\end{align*}
and of diffuse-Cercignani-Lampis type in the second spatial coordinate, by which we mean a diffuse reflection condition with temperature $T_1(x) > 0$ at $x_2 = 1$: for all $t \ge 0$, $x_1 \in [0,1]$, $v \in \RR^2$ with $v = (v_1,v_2)$ and $v_2 < 0$, 
\begin{align}
\label{eq:bdary_condition_x_2_1}
f(t,(x_1,1),v) &= \frac{e^{-\frac{|v|^2}{2T_1(x)}}}{T_1(x) \sqrt{2 \pi T_1(x)}} \int_{\{w \in \RR^2 : w_2 > 0\}} w_2 \, f(t,(x_1,1),w)   \, \d w,
\end{align}
and, at $x_2 = 0$, for $x_1 \in [0,1]$, a Cercignani-Lampis boundary condition with temperature $T_2(x) > 0$ and accommodation parameters $(\rp, \rt) \in (0,1) \times (0,2)$: for $v$ with $v_2 > 0$,
\begin{align}
\label{eq:bdary_condition_x_2_2}
f(t,x,v) &= \int_{\{w \in \RR^2 : w_2 < 0\}} (-w_2) R(w \to v; x) f(t,x,w) \, \d w  \nonumber \\
&= \int_{\{w \in \RR^2: w_2 < 0\}} (-w_2) \frac{f(t,x,w)}{T_2(x) \rp \sqrt{2 \pi T_2(x) \rt(2-\rt) }} e^{-\frac{|v_2|^2 + (1-\rp)|w_2|^2}{2 T_2(x) \rp}}  \nonumber \\
&\qquad \times I_0 \Big(\frac{(1-\rp)^{\frac12} v_2 w_2}{T_2(x) \rp} \Big) \, e^{-\frac{|v_1 - (1-\rt) w_1|^2}{2 T_2(x) \rt (2-\rt)}} \, \d w.
\end{align}

\begin{center}
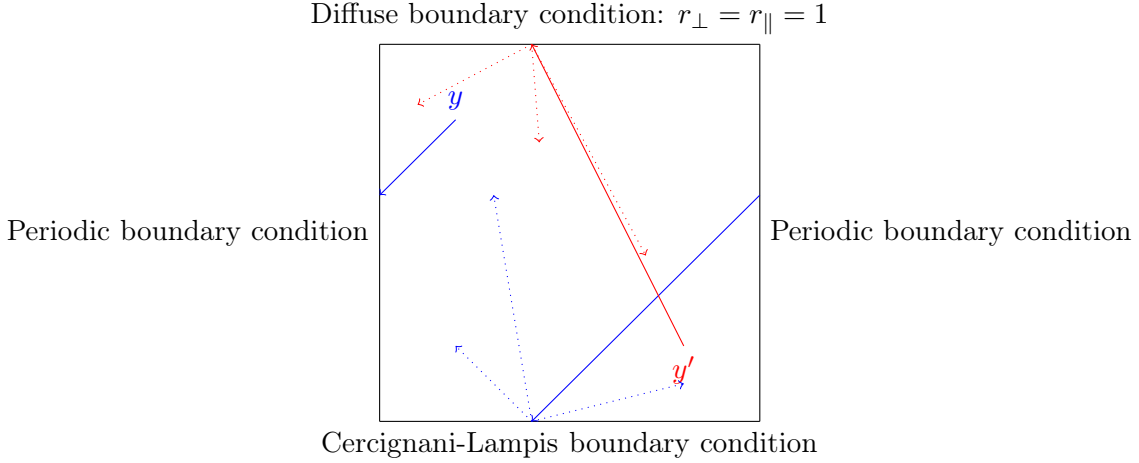
\begin{figure}
\begin{tikzpicture}
\draw (0,0) -- (5,0) ; 
\draw (0,0) -- (0,5) ;
\draw (5,0) -- (5,5) ;
\draw (0,5) -- (5,5) ;
\draw[blue, ->] (1,4) -- (0,3) ;
\draw[blue, ->] (5,3) -- (2,0) ;
\draw (1,4) node[blue, above]{$y$} ;
\draw[red, ->] (4,1) -- (2,5) ;
\draw (4,1) node[red, below]{$y'$} ;
\draw (5,2.5) node[right]{$\hbox{Periodic boundary condition}$} ;
\draw (0,2.5) node[left]{$\hbox{Periodic boundary condition}$} ;
\draw(2.5,5) node[above]{$\hbox{Diffuse boundary condition: } \rp = \rt = 1$};
\draw(2.5,0) node[below] {$\hbox{Cercignani-Lampis boundary condition}$} ;
\draw [blue, dotted, ->] (2,0) -- (1,1); 
\draw [blue, dotted, ->] (2,0) -- (4,.5);
\draw [blue, dotted, ->] (2,0) -- (1.5,3); 
\draw [red, dotted, ->] (2,5) -- (0.5, 4.2);
\draw [red, dotted, ->] (2,5) -- (3.5, 2.2);
\draw [red, dotted, ->] (2,5) -- (2.1, 3.7);
\end{tikzpicture}
\caption{A view of the toy model. In blue, the trajectory of the point starting at time $0$ from $y = (0.2,0.8)$ with velocity $v = (-0.2, -0.2)$. In red, the trajectory of the point starting at time $0$ from $y' = (0.8,0.2)$ with velocity $v' = (-0.2, 0.4)$. Dotted vectors denote possible outcoming velocities after the collisions with the boundary.}
\end{figure} 
\end{center}

We write $K_2$ for the boundary operator associated to those conditions, and rewrite the problem in the following form
\begin{align}
\label{eq:problem_2}
\left\{
\begin{array}{ll}
\partial_t f(t,x,v) + v \cdot \nabla_x f(t,x,v) = 0, \qquad &(t,x,v) \in \RR_+ \times \Omega \times \RR^2, \\
f(t,x,v) = K_2 f(t,x,v), \qquad &(t,x,v) \in \RR_+ \times \Sigma_-, \\
f(0,x,v) = f_0(x,v), \qquad &(x,v) \in G. 
\end{array}
\right.
\end{align}

In addition, we make the following hypothesis. 
\begin{hypo}
\label{hypo_Variable}
We set $T_1 \equiv 1$, $\rp \in (0,1)$, $\rt \in (0,2)$ and assume that $\rt(2-\rt) = \rp$.  We also assume that for all  $x \in [0,1] \times \{0\}$, $T_2(x) < 1$.
\end{hypo}
Hence, we fix the temperature corresponding to the diffuse boundary condition and allow the one associated with the Cercignani-Lampis boundary condition to vary. 

As for problem \eqref{eq:main_pb}, the boundary operator $K_2$ is conservative and stochastic, and thus problem \eqref{eq:problem_2} is governed by a $C_0$-stochastic semigroup $(S_t)_{t \ge 0}$, non-negative, preserving mass, and such that for all $f_0 \in L^1(G)$, for all $t \ge 0$, $S_t f_0 = f(t,\cdot)$ is the unique solution in $L^{\infty}([0,\infty);L^1(G))$ to \eqref{eq:problem_2} taken at time $t$.

In this context, using the periodicity in the $x_1$ variable, we define the function $\tilde{\sigma}$ as
\begin{align*}
\tilde{\sigma}(x,v) = 
\left\{ 
\begin{array}{ll}
\inf\{t > 0, x_2 + tv_2 = 0 \}, &\qquad \hbox{ if } x_2 \in (0,1], v_2 < 0, \\
\inf\{t > 0, x_2 + tv_2 = 1 \}, &\qquad \hbox{ if } x_2 \in [0,1), v_2 > 0, \\
0 &\qquad \hbox{ otherwise.}
\end{array}
\right.
\end{align*}
Moreover, we let $\tilde q(x,v)$ be define for all $x \in \bar{\Omega}$, $v \in \RR^2$, by
\begin{align*}
\tilde q(x,v) = (x_1 + \tilde \sigma(x,v)v_1 - [x_1 + \tilde \sigma(x,v)v_1], x_2 + \tilde \sigma(x,v)v_2),
\end{align*}
where $[y]$ denotes the integer part of $y \in \RR$, so that $ \tilde q(x,v) \in [0,1]^2$ for all $(x,v)$.
In this section, for the sake of clarity, we sometimes (abusively) write $R(u \to v; T)$ where $T > 0$ is the temperature, rather than the corresponding point. Hence, for all $x \in \pO$, $R(u \to v; x) = R(u \to v; T(x))$. Since $x \in \RR^2$ while $T(x) > 0$, any possible ambiguity can always be solved by checking the ambiant space for this variable. 

In the following theorem, we give an explicit formula for the steady state associated to the problem \eqref{eq:problem_2}, and prove that the corresponding velocity flow is zero.
\begin{thm}
\label{thm:steady}
Assume Hypothesis \ref{hypo_Variable} holds. 
Let, for all $(x,v) \in \Omega \times \RR^2$ with $x = (x_1,x_2)$, $v = (v_1,v_2)$,
\begin{align}
\label{eq:steady_state_mixed}
f_{\infty}((x_1,x_2),(v_1,v_2))  &= \beta \Big(  \mathbf{1}_{\{v_2 < 0\}} \frac{e^{-\frac{|v|^2}{2}}}{ \sqrt{2\pi}} \\
&\qquad + \mathbf{1}_{\{v_2 > 0\}} \int_{\{u \in \RR^2: u_2 < 0\}} (-u_2) R \Big(u \to v; T_2(\tilde q(x,-v)) \Big) \frac{e^{-\frac{|u|^2}{2}}}{\sqrt{2\pi}} \d u \Big) \nonumber
\end{align}
where $\beta > 0$ is chosen so that 
\[ \int_{\Omega \times \RR^2} f_{\infty}(x,v) \, \d x \, \d v = 1. \]
Then $f_{\infty}$ is a steady state for the problem \eqref{eq:problem_2}.
Moreover, for all $x \in \Omega$, 
\begin{align}
\label{eq:steady_flow}
\int_{\RR^2} v f_{\infty}(x,v) \d v = 0. 
\end{align}
\end{thm}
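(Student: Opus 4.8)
The statement has two parts: that the function $f_{\infty}$ defined in \eqref{eq:steady_state_mixed} solves the stationary version of \eqref{eq:problem_2}, and that it produces no velocity flow. The plan is to first reduce the steady-state property to a single flux identity at the top plate, and then to derive both that identity and the vanishing of the flow from one common explicit computation of the density emitted at the Cercignani--Lampis plate. To begin, by construction $f_{\infty}$ depends on the space variable only through $\tilde{q}(x,-v)$ when $v_2>0$, and not at all when $v_2<0$; since $\tilde{q}(\cdot,-v)$ is invariant along the characteristic $t\mapsto x+tv$, one has $v\cdot\nabla_x \tilde{q}(x,-v)=0$ and hence $v\cdot\nabla_x f_{\infty}=0$ on $\Omega\times\RR^2$ away from the measure-zero grazing set. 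The periodicity in $x_1$ is built into the definition of $\tilde{q}$, and the Cercignani--Lampis condition \eqref{eq:bdary_condition_x_2_2} at $x_2=0$ holds by definition of $f_{\infty}$, because $\tilde{q}((x_1,0),-v)=(x_1,0)$ for $v_2>0$, so the $v_2>0$ value of $f_{\infty}$ at the bottom plate is exactly $K_2$ applied to its $v_2<0$ value $\beta\, e^{-|v|^2/2}/\sqrt{2\pi}$. Thus the only remaining condition is the diffuse condition \eqref{eq:bdary_condition_x_2_1} at $x_2=1$: since $f_{\infty}((x_1,1),v)=\beta\, e^{-|v|^2/2}/\sqrt{2\pi}$ for $v_2<0$, and $T_1\equiv1$, this reduces to showing that the outgoing flux $\int_{\{w_2>0\}}w_2\, f_{\infty}((x_1,1),w)\,\d w$ equals $\beta$ for every $x_1$.

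The key computation is the following. For a wall temperature $T>0$, set $E(v;T):=\int_{\{u_2<0\}}|u_2|\,R(u\to v;T)\,\frac{e^{-|u|^2/2}}{\sqrt{2\pi}}\,\d u$ for $v_2>0$, which is the value of $f_{\infty}$ just above the Cercignani--Lampis plate. Carrying out the $u_1$-integral (a Gaussian) and the $u_2$-integral, using the identity $\int_0^{\infty}s\,e^{-\alpha s^2}I_0(\gamma s)\,\d s=\tfrac{1}{2\alpha}e^{\gamma^2/4\alpha}$ in the spirit of the computations of Chen \cite{Chen_CL_2020}, one obtains a product Gaussian with tangential variance $(1-\rt)^2+T\rt(2-\rt)$ and normal variance $(1-\rp)+T\rp$. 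Here the hypothesis $\rt(2-\rt)=\rp$ enters decisively: it is equivalent to $(1-\rt)^2=1-\rp$, so the two variances coincide and both equal $\sigma_T^2:=1+\rp(T-1)$. Hence $E(\cdot;T)$ is the \emph{isotropic} density $E(v;T)=\frac{1}{\sqrt{2\pi}\,\sigma_T^{3}}\,e^{-|v|^2/2\sigma_T^2}$. This explicit rotationally invariant form is the crux of the argument and the step I expect to be the most delicate.

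With isotropy in hand, both the flux identity and the zero flow follow from one elementary cancellation. For $v_2>0$ one has $f_{\infty}(x,v)=\beta\, E(v;T_2(\tilde{q}(x,-v)))$, and since $\tilde{q}(x,-v)$ depends on $v$ only through its direction, writing $v=r(\cos\phi,\sin\phi)$ with $\phi\in(0,\pi)$ produces an effective temperature $\sigma=\sigma(\phi)$ independent of $r$. The radial integral $\int_0^{\infty}r^2 e^{-r^2/2\sigma^2}\,\d r=\sigma^3\sqrt{\pi/2}$ exactly cancels the prefactor $\sigma^{-3}$, so the remaining angular integrals are free of $\sigma$. This gives $\int_{\{w_2>0\}}w_2\, E(w;T(\phi))\,\d w=\tfrac12\int_0^{\pi}\sin\phi\,\d\phi=1$ for any direction profile $T(\phi)$; evaluated at the top this yields outgoing flux $\beta$, completing the steady-state proof. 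The same cancellation gives $\int_{\{v_2>0\}}v_2\, f_{\infty}\,\d v=\beta$ and $\int_{\{v_2>0\}}v_1\, f_{\infty}\,\d v=\tfrac{\beta}{2}\int_0^{\pi}\cos\phi\,\d\phi=0$ at every $x$, while the $v_2<0$ Maxwellian part contributes $\int_{\{v_2<0\}}v_2\, f_{\infty}\,\d v=-\beta$ and $0$ to the two components. Adding the two contributions gives $\int_{\RR^2}v\, f_{\infty}(x,v)\,\d v=(0,0)$, which is \eqref{eq:steady_flow}.

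In summary, the main obstacle is the explicit evaluation of $E(v;T)$ together with the recognition that $\rt(2-\rt)=\rp$ is precisely the algebraic condition turning the anisotropic Cercignani--Lampis output into an isotropic Maxwellian; once isotropy is available, the top boundary condition and the vanishing of the flow both reduce to the identical polar-coordinate computation in which the direction-dependent effective temperature $\sigma(\phi)$ drops out entirely. I would organize the write-up as three lemmas—(i) the closed form of $E(v;T)$, (ii) the flux identity $\int_{\{w_2>0\}}w_2 E(w;T(\phi))\,\d w=1$, and (iii) the verification of the three boundary conditions plus \eqref{eq:steady_flow}—with (i) carrying essentially all of the technical weight.
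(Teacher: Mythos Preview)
Your proposal is correct and follows essentially the same route as the paper's proof. The paper isolates your function $E(v;T)$ as equation \eqref{eq:preliminary_beta} in a preliminary lemma (proved via the reciprocity relation for $R$ together with the Gaussian and Bessel identities from Chen \cite{Chen_CL_2020}, rather than by direct integration as you suggest), and then carries out exactly the polar-coordinate cancellation you describe---substituting $s\mapsto s/\sigma(\phi)$ so that the direction-dependent temperature drops out---to verify the flux identity at $x_2=1$ and the vanishing of both components of the flow; your observation that $\rt(2-\rt)=\rp$ is precisely the condition making $E(\cdot;T)$ isotropic is implicit in the paper's computation but not highlighted as such.
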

%

Before getting to the proof, we prove the following lemma, adapted from Chen \cite{Chen_CL_2020}. 

\begin{lemma}
\label{lemma:chen}
For any $a > 0$, $b > 0$ with $a < b$, $w \in \RR$, 
\begin{align}
\label{eq:lemma_chen_gaussian}
\sqrt{\frac{b}{\pi}} \int_{\RR} e^{av^2} e^{-b(v-w)^2} \, \d v = \sqrt{\frac{b}{b-a}}e^{\frac{ab}{b-a} w^2},
\end{align}
and
\begin{align}
\label{eq:lemma_chen_rice}
2b \int_0^{\infty} v e^{av^2} e^{-bv^2} e^{-bw^2} I_0(2bvw) \, \d v = \frac{b}{b-a} e^{\frac{ab}{b-a}w^2}.
\end{align}
Therefore, for all $v = (v_1,v_2) \in \RR^2$ with $v_2 > 0$, for all $T_2 \in (0,1)$ (possibly depending on $v$), under Hypothesis \ref{hypo_Variable},
\begin{align}
\label{eq:preliminary_beta}
\int_{\{ u \in \RR^2, u_2 < 0\}} (-u_2) R(u \to v, T_2) \frac{e^{-\frac{|u|^2}{2}}}{\sqrt{2 \pi}} \, \d u = \frac{e^{-\frac{|v|^2}{2(1 - \rp + T_2 \rp)}}}{(1-\rp + T_2 \rp)^{\frac{3}{2}} \sqrt{2\pi}} . 
\end{align}
\end{lemma}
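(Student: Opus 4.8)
The plan is to establish the two scalar identities \eqref{eq:lemma_chen_gaussian} and \eqref{eq:lemma_chen_rice} first, and then to assemble them into \eqref{eq:preliminary_beta} by inserting the explicit two-dimensional Cercignani--Lampis kernel and separating the tangential and normal variables.

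For \eqref{eq:lemma_chen_gaussian} I would simply complete the square, writing $av^2 - b(v-w)^2 = -(b-a)\big(v - \tfrac{bw}{b-a}\big)^2 + \tfrac{ab}{b-a}w^2$, which is legitimate since $b-a>0$ guarantees integrability. The Gaussian integral over $\RR$ contributes $\sqrt{\pi/(b-a)}$ and the claimed formula follows at once. I note that only $b-a>0$ is used, so the identity remains valid for any real $a<b$, in particular for the value $a=-\tfrac12$ that will occur in the last step.

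For \eqref{eq:lemma_chen_rice} I would recognize the integrand as an exponential moment of a Rice variable. Matching parameters via Definition \ref{defi:Rice}, one sees that $2bv\,e^{-bv^2}e^{-bw^2}I_0(2bvw) = f_{\mathrm{Ri}(w,\frac{1}{2b})}(v)$, so the left-hand side equals $\EE[e^{aY^2}]$ with $Y\sim\mathrm{Ri}(w,\tfrac{1}{2b})$. Proposition \ref{prop:Rice} (with $\vartheta=0$) then gives $Y\overset{\mathcal{L}}{=}\sqrt{Y_1^2+Y_2^2}$ for independent $Y_1\sim\mathcal{N}(w,\tfrac{1}{2b})$ and $Y_2\sim\mathcal{N}(0,\tfrac{1}{2b})$, whence $\EE[e^{aY^2}]=\EE[e^{aY_1^2}]\,\EE[e^{aY_2^2}]$. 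Each factor is a one-dimensional Gaussian integral evaluated by \eqref{eq:lemma_chen_gaussian}, and multiplying the two closed forms yields exactly $\tfrac{b}{b-a}e^{\frac{ab}{b-a}w^2}$.

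Finally, for \eqref{eq:preliminary_beta} I would substitute the explicit kernel from \eqref{eq:bdary_condition_x_2_2} into the integral and use that it factorizes into a Gaussian in the tangential variable $u_1$ and a Rice-type factor in the normal variable $u_2$. After the change of variable $u_2\mapsto -u_2$ (using that $I_0$ is even), the normal integral matches \eqref{eq:lemma_chen_rice} with $b=\frac{1-\rp}{2T_2\rp}$, $w=\frac{v_2}{\sqrt{1-\rp}}$ and $a=-\tfrac12$ (the value coming from the unit-temperature Maxwellian $e^{-|u|^2/2}$), while the tangential integral, after writing $(v_1-(1-\rt)u_1)^2=(1-\rt)^2\big(u_1-\tfrac{v_1}{1-\rt}\big)^2$, matches \eqref{eq:lemma_chen_gaussian} with $b'=\frac{(1-\rt)^2}{2T_2\rp}$, $w'=\frac{v_1}{1-\rt}$ and $a=-\tfrac12$. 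The key structural input is Hypothesis \ref{hypo_Variable}: the relation $\rt(2-\rt)=\rp$ both lets me replace $\rt(2-\rt)$ by $\rp$ throughout and, through $(1-\rt)^2=1-\rt(2-\rt)=1-\rp\in(0,1)$, guarantees $1-\rt\neq 0$, so that the rescaling in the tangential integral is licit and the two denominators $(1-\rt)^2+T_2\rp$ and $1-\rp+T_2\rp$ coincide. Collecting the prefactors and the two exponentials $e^{-v_1^2/(2D)}$ and $e^{-v_2^2/(2D)}$ with $D=1-\rp+T_2\rp$ then reconstitutes $e^{-|v|^2/(2D)}$ together with the constant $D^{-3/2}(2\pi)^{-1/2}$, which is the claimed right-hand side. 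I expect the only genuine obstacle here to be the bookkeeping of the many $T_2,\rp,\rt$ prefactors and verifying that they collapse to $(1-\rp+T_2\rp)^{-3/2}(2\pi)^{-1/2}$; the conceptual content is carried entirely by Hypothesis \ref{hypo_Variable} and by the two elementary preliminary identities.
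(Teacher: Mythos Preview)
Your proposal is correct. For the two scalar identities you give self-contained arguments (completing the square for \eqref{eq:lemma_chen_gaussian}, and a probabilistic reduction to two independent Gaussians via Proposition~\ref{prop:Rice} for \eqref{eq:lemma_chen_rice}), whereas the paper simply cites Chen's Lemmas~11 and~12; your proofs are fine and at the right level.

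For \eqref{eq:preliminary_beta} your route genuinely differs from the paper's. The paper first invokes the reciprocity relation $R(u\to v;T_2)=R(-v\to -u;T_2)\,e^{-|v|^2/(2T_2)}e^{|u|^2/(2T_2)}$ and the substitution $u\mapsto -u$, which injects an extra factor $e^{|u|^2/(2T_2)}$ into the integrand; the scalar identities are then applied with $b=\tfrac{1}{2T_2\rp}$ and $a=\tfrac{1}{2T_2}-\tfrac12>0$ (this is where the assumption $T_2<1$ enters), so the hypothesis $a>0$ of the lemma is used literally. You instead insert the kernel as is and apply the identities with $a=-\tfrac12$; this obliges you to note (correctly) that both \eqref{eq:lemma_chen_gaussian} and \eqref{eq:lemma_chen_rice} extend to all $a<b$, not only $a>0$. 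Your approach dispenses with reciprocity at the price of this small extension and of tracking an extra $(1-\rp)^{-1}$-type constant before the prefactors collapse; the paper's choice has the cosmetic advantage that the normalizing constant $\tfrac{1}{T_2\rp}$ matches $2b$ exactly. Either way, the structural use of Hypothesis~\ref{hypo_Variable} is the same: $(1-\rt)^2=1-\rp$ makes the tangential and normal denominators coincide, and $\rp<1$ ensures $1-\rt\neq 0$ so your rescaling in the $u_1$-integral is licit.
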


\begin{proof}
Equation \eqref{eq:lemma_chen_gaussian} is a straightforward adaptation in dimension one of the computation done in \cite[Lemma 11]{Chen_CL_2020}. Equation \eqref{eq:lemma_chen_rice} is given in \cite[Lemma 12]{Chen_CL_2020}.

We now turn to the proof of \eqref{eq:preliminary_beta}. We recall from Chen \cite{Chen_CL_2020} that we have the reciprocity property: for all $a = (a_1,a_2) \in \RR^2$ with $a_2 > 0$, $b = (b_1,b_2) \in \RR^2$ with $b_2 < 0$, for all $T > 0$, we have
\begin{align*}
R(b \to a; T) = R(-a \to -b; T) e^{-\frac{a^2}{2T}} e^{\frac{b^2}{2T}}. 
\end{align*}
Applying this inside the integral, and performing the change of variable $u \to -u$, we find
\begin{align*}
\int_{\{ u \in \RR^2, u_2 < 0\}} \hspace{-.5cm} (-u_2) R(u \to v, T_2) \frac{e^{-\frac{|u|^2}{2}}}{\sqrt{2 \pi}} \, \d u = \frac{e^{-\frac{|v|^2}{2T_2}}}{\sqrt{2\pi}} \int_{\{u \in \RR^2, u_2 > 0\}} \hspace{-.5cm} u_2 R(-v \to u ; T_2) e^{-\frac{|u|^2}{2}} e^{\frac{|u|^2}{2T_2}} \, \d u. 
\end{align*}
The integral on the right-hand side writes (recall that $\rp = \rt(2-\rt)$ by hypothesis and that $I_0$ is even)
\begin{align*}
\int_{\{u \in \RR^2, u_2 > 0\}} u_2 &R(-v \to u ; T_2) \frac{e^{-\frac{|u|^2}{2}}}{\sqrt{2 \pi}} e^{\frac{|u|^2}{2T_2}} \, \d u \\
&= \Big( \frac{1}{T_2 \rp} \int_0^{\infty} u_2 e^{-\frac{u_2^2}{2}} e^{\frac{u_2^2}{2T_2}} e^{-\frac{u_2^2}{2 T_2 \rp}} e^{-\frac{v_2^2 (1-\rp)}{2 T_2 \rp}} I_0\Big(\frac{(1-\rp)^{\frac12} u_2 v_2}{T_2 \rp} \Big) \, \d u_2 \Big) \\
&\qquad \times \Big( \frac{1}{\sqrt{2 \pi \rp T_2}} \int_\RR e^{-\frac{u_1^2}{2}} e^{\frac{u_1^2}{2T_2}} e^{-\frac{(u_1 + (1-\rt) v_1)^2}{2 T_2 \rp}} \d u_1 \Big).
\end{align*}
We apply first \eqref{eq:lemma_chen_rice} with $b = \frac{1}{2 T_2 \rp}$, $w = \sqrt{1-\rp} v_2$, $a = (\frac{1}{2T_2} - \frac{1}2) \in (0,b)$ since $\rp \in (0,1)$ and $T_2 < 1$, and we find
\begin{align*}
\Big( \frac{1}{T_2 \rp} \int_0^{\infty} &u_2 e^{-\frac{u_2^2}{2}} e^{\frac{u_2^2}{2T_2}} e^{-\frac{u_2^2}{2 T_2 \rp}} e^{-\frac{v_2^2 (1-\rp)}{2 T_2 \rp}} I_0\Big(\frac{(1-\rp)^{\frac12} u_2 v_2}{T_2 \rp} \Big) \, \d u_2 \Big) \\
&= \frac{1}{1 - \rp + T_2 \rp} e^{v_2^2(\frac{1}{2T_2} - \frac1{2(1-\rp + T_2 \rp)})}.
\end{align*}
We now apply \eqref{eq:lemma_chen_gaussian} with $b = \frac{1}{2T_2\rp}$, $a = \frac{1}{2T_2} - \frac12 \in (0,b)$ and $w = -(1-\rt) v_1$:
\begin{align*}
 \frac{1}{\sqrt{2 \pi \rp T_2}} \int_\RR e^{-\frac{u_1^2}{2}} e^{\frac{u_1^2}{2T_2}} e^{-\frac{(u_1 + (1-\rt) v_1)^2}{2 T_2 \rp}} \d u_1 =\frac{ \sqrt{1 - \rp + T_2 \rp}}{1 - \rp + T_2 \rp} e^{v_1^2(\frac{1}{2T_2} - \frac1{2(1-\rp + T_2 \rp)})},
\end{align*} 
where we used that $(1-\rt)^2 = 1 - \rt(2-\rt) = 1 - \rp$. The conclusion follows by bringing together both terms. Note that the derivation can be performed in the same manner if $T_2$ depends on $v$.  
\end{proof}

\begin{proof}[Proof of Theorem \ref{thm:steady}]

\textbf{Step 1 : steady state of the free-transport equation without boundary condition.} 

Note that for all $x \in \Omega$, $v \in \RR^d$, for all $h > 0$ small enough, $\tilde q(x+hv,-v) = \tilde q(x,-v)$. Hence, $v \cdot \nabla_x f_{\infty}(x,v) = 0$ and since it does not depend on $t$, this shows that the candidate is a solution to the free-transport equation without boundary conditions. 

\vspace{.3cm}
 
We only need to check that the boundary conditions are satisfied. We clearly have that the boundary conditions at $x_1 = 0$ and $x_1 = 1$ are satisfied using the definition of $f_{\infty}$ and $\tilde{q}$.   We now turn to the to the boundary conditions at $x_2 = 0$ and $x_2 = 1$. 

\vspace{.3cm}

\textbf{Step 2: boundary condition at $x_2 = 1$.}
Let us compute the left-hand side of \eqref{eq:bdary_condition_x_2_1} and show that $f_{\infty}$ indeed satisfies the boundary condition. The former writes, for $v = (v_1,v_2) \in \RR^2$ with $v_2 < 0$ and for all $ x = (x_1,1)$ with $x_1 \in [0,1]$,
\begin{align*}
\frac{e^{-\frac{|v|^2}{2}}}{ \sqrt{2 \pi}} \int_{\{w \in \RR^2:w_2 > 0\}} w_2 \,  f_{\infty}(x,w)\,  \d w
\end{align*}
and we only need to prove that  
\[  \int_{\{w \in \RR^2:w_2 > 0\}} w_2 \,  f_{\infty}(x,w)\,  \d w = \beta  \]
to conclude this step. For this, we will use Lemma \ref{lemma:chen}. Indeed

\begin{align*}
\int_{\{w \in \RR^2:w_2 > 0\}} &w_2 \,  f_{\infty}(x,w)\,  \d w \\
&= \beta \int_{\{w \in \RR^2:w_2 < 0\}} \hspace{-.3cm} (-w_2) \,  \int_{\{u \in \RR^2: u_2 < 0\}} \hspace{-.3cm} (-u_2) R(u \to -w; T_2(\tilde q(x,w)) \frac{e^{-\frac{|u|^2}{2}}}{\sqrt{2\pi}} \, \d u \,  \d w
\end{align*}
where we performed the change of variable $w \to -w$. Applying \eqref{eq:preliminary_beta}, we find
\begin{align*}
\int_{\{w \in \RR^2:w_2 > 0\}} w_2 \,  f_{\infty}(x,w)\,  \d w = \beta \int_{\{w \in \RR^2:w_2 < 0\}} (-w_2) \frac{e^{-\frac{|w|^2}{2(1-\rp + T_2(\tilde q(x,w)) \rp}}}{(1-\rp + T_2(\tilde q(x,w)) \rp)^{\tfrac32} \sqrt{2\pi}} \, \d w.
\end{align*}
We now write $w$ in polar coordinates, with $s = |w|$ and $\theta$ the corresponding angle with the vector $e_1 = (0,1)$, with the condition $\theta \in (-\pi,0)$ to ensure that $w_2 < 0$. Note that $\tilde q(x,w) = \tilde q(x,\underline{u}(\theta))$, where $\underline{u}(\theta)$ is the unit vector associated to the angle $\theta$, thus this quantity is independent of $s$. Recall that the Jacobian of this polar change of coordinates is simply given by $s$. Hence the integral rewrites
\begin{align*}
\int_{\{w \in \RR^2:w_2 > 0\}} \hspace{-.2cm} w_2 \,  f_{\infty}(x,w)\,  \d w  = \beta  \int_{-\pi}^0 \int_{0}^{\infty} (-s^2 \sin(\theta)) \frac{e^{-\frac{s^2}{2(1-\rp + T_2(\tilde q(x,\underline{u}(\theta))) \rp}}}{(1-\rp + T_2(\tilde q(x,\underline{u}(\theta)) \rp)^{\tfrac32} \sqrt{2\pi}} \, \d s \d \theta
\end{align*}
We now perform the change of variable $s \to \frac{s}{\sqrt{1-\rp + T_2(q(x,\underline{u}(\theta))) \rp}}$ (note that the denominator is independent of $s$) in the integral on $s$, to find
\begin{align*}
\int_{\{w \in \RR^2:w_2 > 0\}} w_2 \,  f_{\infty}(x,w)\,  \d w = \beta \int_{-\pi}^0 \int_{0}^{\infty} (-s^2 \sin(\theta)) \frac{e^{-\frac{s^2}{2}}}{\sqrt{2\pi}} \, \d s \d \theta
\end{align*}
and applying the reverse change of coordinates $(s,\theta) \to w$ from $\RR_+ \times (-\pi,0)$ to the set $\{w \in \RR^2, w_2 < 0\}$, we obtain
\begin{align*}
\int_{\{w \in \RR^2:w_2 > 0\}} w_2 \,  f_{\infty}(x,w)\,  \d w & = \beta \int_{\{w \in \RR^2:w_2 < 0\}} (-w_2) \frac{e^{-\frac{|w|^2}{2}}}{\sqrt{2\pi}} \d w \\
&= \beta \Big( \int_{-\infty}^0 (-w_2) e^{-\frac{w_2^2}{2}} \d w_2 \Big) \Big( \int_{\RR} \frac{e^{-\frac{w_1^2}{2}}}{\sqrt{2 \pi}} \Big) = \beta.
\end{align*}

\vspace{.3cm}

\textbf{Step 3: boundary condition at $x_2 = 0$.} 
We have, for all $x = (x_1,0)$ with $x_1 \in [0,1]$, for all $v = (v_1,v_2) \in \RR^2$ with $v_2 > 0$, computing the right-hand-side of \eqref{eq:bdary_condition_x_2_2},
\begin{align*}
\int_{\{u \in \RR^2 : u_2 < 0\}} &(-u_2) R(u \to v; T_2(x)) f_{\infty}(x,u) \, \d u \\
&= \beta \int_{\{u \in \RR^2 : u_2 < 0\}} (-u_2) R(u \to v; T_2(x)) \frac{e^{-\frac{|u|^2}{2}}}{\sqrt{2\pi}} \, \d u = f_{\infty}(x,v),
\end{align*}
using that, since $x_2 = 0$ and $v_2 > 0$, $\tilde q(x,-v) = x$ and the formula \eqref{eq:steady_state_mixed}.

\vspace{.3cm}

The next two steps are devoted to the proof of \eqref{eq:steady_flow}.

\vspace{.3cm}

\textbf{Step 4: flow for $v_1$.}
Let $x \in \Omega$. 
Clearly,
\begin{align*}
\int_{\{v \in \RR^2: v_2 < 0\}} v_1 f_{\infty}(x,v) \d v = \beta \int_{\{v \in \RR^2: v_2 < 0\}} v_1 \frac{e^{-\frac{|v|^2}{2} }}{\sqrt{2 \pi}} \, \d v = 0,
\end{align*}
by oddity. On the other hand, by applying \eqref{eq:preliminary_beta},
\begin{align*}
\int_{\{v \in \RR^2: v_2 >  0\}} &v_1 f_{\infty}(x,v) \d v \\
&= \beta \int_{\{v \in \RR^2: v_2 >  0\}} v_1 \int_{\{u \in \RR^2: u_2 < 0\}} (-u_2) R\Big(u \to v; T_2(\tilde q(x,-v))\Big) \frac{e^{-\frac{|u|^2}{2}}}{\sqrt{2\pi}} \d u \d v\\
 &= \beta \int_{\{v \in \RR^2: v_2 >  0\}} v_1 \frac{e^{-\frac{|v|^2}{2(1-\rp + T_2(\tilde q(x,-v)) \rp)}}}{(1-\rp + T_2(\tilde q(x,-v)) \rp)^{\tfrac{3}{2}} \sqrt{2 \pi} } \d v 
\end{align*}
and by applying again the change in polar coordinates and the change of variables from Step 2, we find
\begin{align*}
\int_{\{v \in \RR^2: v_2 >  0\}} v_1 \frac{e^{-\frac{|v|^2}{2(1-\rp + T_2(q(x,-v)) \rp)}}}{(1-\rp + T_2(q(x,-v)) \rp)^{\tfrac{3}{2}} \sqrt{2 \pi} } \d v 
&= \int_{\{v \in \RR^2, v_2 > 0\}} v_1 \frac{e^{-\frac{|v|^2}{2}}}{\sqrt{2\pi}} \d v = 0 
\end{align*}
by oddity.

\vspace{.3cm}

\textbf{Step 5: flow for $v_2$.} Let again $x \in \Omega$. On the one hand,
\begin{align*}
\int_{\{v \in \RR^2:v_2 < 0\}} v_2 f_{\infty}(x,v) \d v &= \beta \int_{\{v \in \RR^2:v_2 < 0\}} v_2 \frac{e^{-\frac{|v|^2}{2 }}}{\sqrt{2 \pi}} \, \d v \\
&= \beta \Big( \int_\RR \frac{e^{-\frac{v_1^2}{2}}}{\sqrt{2 \pi}} \d v_1 \Big) \Big( \int_{-\infty}^0 v_2 e^{-\frac{|v_2|^2}{2}} \, \d v_2 \Big) = -\beta,
\end{align*}
by a simple decomposition. On the other hand
\begin{align*}
\int_{\{v \in \RR^2: v_2 > 0\}} &v_2 f_{\infty}(x,v) \d v \\
&= \beta \int_{\{ v \in \RR^2, v_2 > 0\}} v_2 \int_{\{u \in \RR^2: u_2 < 0\}} (-u_2) R\Big(u \to v; T_2( \tilde q(x,-v)) \Big) \frac{e^{-\frac{|u|^2}{2}}}{\sqrt{2 \pi}} \d u \d v. 
\end{align*}
The double integral on the right-hand side is exactly the one computed in Step 2 (note that here, $x \in \Omega$ rather than $x \in \pO$, but the same computations apply), and is thus worth $1$. Therefore
\begin{align*}
\int_{\{v \in \RR^2: v_2 > 0\}} &v_2 f_{\infty}(x,v) \d v = \beta,
\end{align*}
which concludes the proof.
\end{proof}

\begin{rmk}
The result of Theorem \ref{thm:steady} is not surprising in the case where $\rp = \rt = 1$ even at $x_2 = 0$, since this corresponds to the diffuse boundary conditions at both boundaries for $v_2$. Thus, we expect the absence of steady flow from the result of Sone \cite{Sone_Molecular_Gas_2007} in this case.
\end{rmk}

\begin{rmk}
More interestingly, for $(\rp,\rt) \ne (1,1)$, Theorem \ref{thm:steady} shows that the interaction between a diffuse boundary condition and a ``real'' (i.e. not diffuse) Cercignani-Lampis condition is not enough to generate a velocity flow. The idea is that the diffuse boundary condition kills all correlations with the past. This can be seen in the second computation for the flow for $v_1$: the fact that the last integral in $u_1$ is $0$ is the key point. We plan to investigate this model with two ``real'' Cercignani-Lampis boundary conditions by means of a probabilistic approach in the near future.
\end{rmk}

\bibliographystyle{plain}
\bibliography{biblioCL}

\end{document}